\newtheorem{thm}{Theorem}[section]
\newtheorem{lemma}[thm]{Lemma}
\newtheorem{proposition}{Proposition}[section]
\newtheorem{remark}{Remark}[section]
\newcommand{\df}{\textrm{d}}
\newcommand{\E}[2][n]{\mathbb{E}_{#1} \left[ {#2} \right]}
\newcommand{\N}{\mathcal{N}}
\newcommand{\ep}{\varepsilon}
\newcommand{\I}[1]{\mathbbm{1}_{\left \{ #1 \right \}} }
\newcommand{\bx}{\boldsymbol{x}}
\newcommand{\bw}{\boldsymbol{\df W}}
\title{Limiting Behaviors of High Dimensional Stochastic Spin Ensembles}
\date{}
\author[1]{Y. Gao} 
\author[2]{K. Kirkpatrick} 
\author[1]{J. Marzuola} 
\author[3]{J. Mattingly} 
\author[1]{K. Newhall} 
\affil[1]{Department of Mathematics, University of North Carolina at Chapel Hill}
\affil[2]{Department of Mathematics, University of Illinois at Urbana-Champaign}
\affil[3]{Department of Mathematics, Duke University}
\begin{document}

\maketitle

\begin{abstract}
               Lattice spin models in statistical physics are used to understand magnetism. Their Hamiltonians are a discrete form of a version of a Dirichlet energy, signifying a relationship to the Harmonic map heat flow equation. The Gibbs distribution, defined with this Hamiltonian, is used in the  Metropolis-Hastings (M-H) algorithm to generate dynamics tending towards an equilibrium state. In the limiting situation when the inverse temperature is large, we establish the relationship between the discrete M-H dynamics and the continuous Harmonic map heat flow associated with the Hamiltonian. We show the convergence of the M-H dynamics to the Harmonic map heat flow equation in two steps: First, with fixed lattice size and proper choice of proposal size in one M-H step, the M-H dynamics acts as gradient descent and will be shown to converge to a system of Langevin stochastic differential equations (SDE). Second, with proper scaling of the inverse temperature in the Gibbs distribution and taking the lattice size to infinity, it will be shown that this  SDE system  converges to the deterministic Harmonic map heat flow equation. Our results are not unexpected, but show remarkable connections between the M-H steps and the SDE Stratonovich formulation, as well as reveal trajectory-wise out of equilibrium dynamics to be related to a canonical PDE system with geometric constraints. 
\end{abstract}

\tableofcontents

\section{Introduction} \label{intro}

The Metropolis-Hastings (M-H) algorithm \cite{hastings1970monte} is widely used in particle statistics for model estimations \cite{newman1999monte,binder1993monte,landau2014guide,batrouni2004metastable,maccari2016numerical}.  It constructs a discrete-time Markov chain to sample a desired probability distribution by accepting or rejecting proposed states. For applications in statistical physics, it is often the Gibbs or canonical distribution that is to be sampled. In this case, the algorithm accepts all the proposed new states with lower energy and often rejects the proposals with higher energy.  Similar sampling can be achieved simulating a Langevin Stochastic differential equation (SDE) that performs gradient descent with noise; it too has the Gibbs distribution as its steady-state distribution.  This suggests that the Langevin SDE might be the optimal M-H algorithm in which all proposals are accepted.

For certain forms of probability distributions, the diffusion limit and therefore optimal scaling, of the random walk M-H algorithm has been obtained~\cite{roberts1997weak,breyer2000metropolis,mattingly2012diffusion}.  Specifically, for product measures in \cite{roberts1997weak} and the Gibbs distribution of a lattice model in \cite{breyer2000metropolis}, the weak convergence to Langevin diffusions has been shown by comparing generator functions. For non-product form measures the weak convergence to a stochastic partial differential equation was shown in \cite{mattingly2012diffusion}. 
These works consider the weak convergence only in equilibrium. Subsequent works \cite{jourdain2014optimal,jourdain2015optimal} consider scaling limits of out of equilibrium systems approaching equilibrium, but for product measures.   In this work, we fill a missing gap in the above mentioned works of trajectory-wise convergence, without assuming the system is in equilibrium, for non-product measures.

To address the question of trajectory-wise convergence, we study the XY and the classical Heisenberg lattice spin models \cite{stanley1968dependence} that play an important role in statistical physics to understand phase transitions and other phenomena including superconductivity  \cite{kosterlitz1973ordering,eley2012approaching}.  It is important to understand the limiting behavior of these models, including optimal scalings for simulations, and their critical properties.  For example, 
asymptotic results on the total spin of the mean-field XY and classical Heisenberg models have been studied by large deviation theory and Stein's method in \cite{kirkpatrick2013asymptotics,kirkpatrick2016asymptotics}. Numerically, Monte Carlo methods are used to verify analytical results about the XY model in \cite{maccari2016numerical,batrouni2004metastable} and the classical Heisenberg model in \cite{peczak1991high,chen1993static}.

The XY and classical Heisenberg models are defined on a periodic $d$-dimensional lattice $\mathbb{T}^d$ with $\delta x = \frac 1N$ the distance between adjacent vertices. Each spin sits at a lattice point and is described by a unit vector $\sigma_i: \mathbb{T}^d \to \mathbb{S}^m$, for $i=1\dots N$ where $m=1$ for the XY model and $m=2$ for the classical Heisenberg model.  We will focus primarily on the case of $\mathbb{T}^1 \to \mathbb{S}^2$, but continue the discussion for general $\mathbb{T}^d \to \mathbb{S}^m$ as the trajectory-wise convergence should follow similarly. The calculation from M-H to SDE should follow for higher dimension $T^d \to S^m$; SDE to PDE depends on the smooth solution of harmonic map heat flow equation, which we only have guaranteed for all time in the $T^2 \to S^2$ case using for instance the work \cite{guo1993landau}.  Here we assume that we are looking on a time scale for which the PDE has a smooth solution and focus on comparing to the microscopic dynamics.  It is an interesting topic for future work to study the nature of singularity formation in the microscopic and mesoscopic models.

The Hamiltonian of the system,  
\begin{equation} \label{eq:H}
  H = J \sum_{<i,j>} \| \sigma_i - \sigma_j \|^2,
\end{equation}
gives energy to misaligned neighboring spins where $<i,j>$ represents nearest neighbors and $J=N^{2-d}$ is a scaling factor. Denote $\sigma$ as the total spin configuration of $\sigma_i,i \in \mathbb{T}^d$, the M-H algorithm accepts/rejects based on the Gibbs distribution defined as
\begin{equation} \label{eq:P}
  \rho (\sigma) =  Z^{-1}\exp(-\beta H(\sigma)),
\end{equation}
where $\beta = (k_B T)^{-1}$ is the inverse temperature and $Z$ is the normalizing factor (aka partition function).

We will show that the M-H algorithm applied to the above lattice system, in the limit of small perturbations in the proposal, produces equivalent trajectories to the overdamped Langevin equation,
\begin{equation} \label{eq:Langevin}
 \df \sigma_i = \textrm{P}_{\sigma_i}^\perp (\Delta_N \sigma_i) \df t + \textrm{P}_{\sigma_i}^\perp \left( \sqrt{\frac N \beta} \df W_i \right),
\end{equation}
(interpreted in the Stratonovich form) where the $W_i$ are $(m+1)$-dimensional independent Brownian motions,
$
\Delta_N \sigma_i = - N^2 ( 2 \sigma_i - \sigma_{i+1} - \sigma_{i-1}), 
$  
is the discrete Laplacian and $\textrm{P}_x^\perp (y)$ for $\|x\|=1$ is the projection of $y$ onto the tangent plane of $x$. We find that the exact form of the projection does not matter, for example one could take either  $\textrm{P}_x^\perp (y) = y-(x \cdot y)x$ or $\textrm{P}_x^\perp (y) = x \times y$ when $m=2$.
The Stratonovich understanding of \eqref{eq:Langevin} is essential to keep the $\sigma_i$ as unit vectors, and for more on this equation see \cite{banas2014stochastic}. Our proof in section \ref{sec:SDE} naturally leads to the It\^o form of equation \eqref{eq:Langevin}, which includes an additional It\^o correction term of $-N\beta^{-1}\sigma_i \df t$.  This (overdamped) Langevin system \eqref{eq:Langevin} performs gradient descent on the energy defined by \eqref{eq:H} with the added constraint that $\sigma_i$ is confined to $\mathbb{S}^m$, $m=1,2$. In the case of $\mathbb{S}^2$ for the classical Heisenberg model, it is an SDE representation of the overdamped Landau-Lifshitz-Gilbert equation that has the Gibbs distribution as its invariant measure \cite{banas2014stochastic,kohn2005magnetic}.

Taking the number of lattice points, $N$, to infinity or equivalently the lattice spacing $\delta x = \frac 1N$ to zero, the limit of the deterministic part of \eqref{eq:Langevin} is the partial differential equation (PDE) called the harmonic map heat flow equation  
\begin{equation} \label{eq:harmonic}
 \partial_t \sigma = \textrm{P}_{\sigma}^\perp ( \Delta \sigma).
\end{equation}
In the $\mathbb{S}^2$ case, \eqref{eq:harmonic} is in the form of the overdamped Landau-Lifshitz equation \cite{guo2008landau}
\begin{equation}\label{eq:LLG}
 \partial_t \sigma = - \sigma \times (\sigma \times \Delta \sigma).
\end{equation}
In \cite{guo1993landau} this Landau-Lifshitz equation was shown to be equivalent to the Harmonic map heat flow from $\mathbb{T}^d \to \mathbb{S}^2$.  With the scaling $J=N^{2-d}$, the Hamiltonian in \eqref{eq:H} is the discrete form of the Dirichlet energy, $\int_\Omega | \nabla \sigma |^2 \df \Omega$, for this harmonic map heat flow.  This suggests that by decreasing the temperature, the out of equilibrium dynamics of the M-H algorithm converge to the deterministic flow of \eqref{eq:LLG} with large $N$ for the classical Heisenberg model.  We will show this equivalence by showing the convergence of the system of SDEs \eqref{eq:Langevin} to the PDE \eqref{eq:harmonic} in the limit of large $N$ with an appropriate scaling of the temperature to zero with $N$.  We point out that in order to obtain the finite temperature Stochastic PDE limit of the M-H dynamics in arbitrary dimension required a regularization of the noise.  We intend to pursue deriving a stochastic PDE limit of the M-H algorithm using colored noise in the proposal for future work.

While this current work does not focus on the dynamics of equation \eqref{eq:harmonic}, we point out that much work has been done on harmonic maps, the evolution of deterministic and stochastic harmonic map heat flows, as well as on describing the potential for singularity formation.  We cite for instance the now classical works of Eells and Sampson on Harmonic Maps \cite{eells1964harmonic} and of Struwe \cite{struwe1985evolution}, and the subsequent works of Chen-Struwe and Chen \cite{chen1989existence,chen1989weak}.  Rigidity and singularity formation was further understood in the works of Topping on the evolution of harmonic map heat flows and singularity formation \cite{topping1997rigidity,topping2002reverse,topping2004repulsion}, and the works of Lin-Wang \cite{lin1998energy,lin1999harmonic,lin2002harmonic}. We also note the book \cite{lin2008analysis} for a useful background on the subject.  The literature on the Harmonic Map Heat Flow is quite extensive and we do not suggest that the list here is complete.  

Though we do not establish the connection between the M-H dynamics and the Stochastic Harmonic Map Heat Flow in this work, these equations have also been studied recently.  We cite again the book \cite{banas2014stochastic} by Banas-Brzezniak-Neklyudov-Prohl that has many useful results in it about Stochastic ODE approximations and analysis of many aspects of the full Landau-Lifshitz-Gilbert stochastic PDE version of the full Landau-Lifshitz equation, which is a PDE similar to the harmonic map heat flow but including both dissipative and dispersive components of the flow.  We also cite more recent works on dynamics of stochastic Harmonic Map Heat Flow equations, including the works of Guo-Philipowski-Thalmaier \cite{guo2014stochastic}, Hocquet \cite{hocquet2018struwe,hocquet2019finite}, Chugreeva-Melcher \cite{chugreeva2018strong}.  For works on numerical discretization in a semi-discrete fashion of the stochastic Landau-Lifshitz equation, see the work of Alouges-De Bouard-Hocquet \cite{alouges2014semi}.

One method to obtain the deterministic limit of a stochastic system is to consider the hydrodynamic limit with relative entropy bounds \cite{guo1988nonlinear,yau1991relative,funaki1997motion}. Due to the geometric constraint in the XY and classical Heisenberg models, it is difficult to calculate the averages with respect to the Gibbs states as in \cite{guo1988nonlinear,yau1991relative,funaki1997motion} if the spin is expressed in Cartesian coordinates. One might try to use polar coordinates to do window averaging but the potential is not convex as in \cite{funaki1997motion}. Since the hydrodynamic limit for the XY and the classical Heisenberg models are not fully understood, we choose an alternative approach of taking inverse temperature $\beta$ to infinity along with particle number $N \to \infty$.    

One challenge in showing convergence of the spin models to diffusions is that the distribution \eqref{eq:P} is unaware of the confining geometry that the spins must remain in $\mathbb{S}^m$. Rather, it is included in the proposal step of the M-H algorithm.  Therefore, simply considering the equilibrium distribution is not enough to show equivalence, the proposal step must be taken into account for a trajectory-wise comparison between processes.  While always accepting the proposal step leads to each spin behaving independently like Brownian motion on the surface of $\mathbb{S}^m$, sampling a product measure, to consider the true M-H algorithm sampling \eqref{eq:P} we must therefore take into account the interdependence of the spins.  

Working directly with the proposal step, which includes a normalizing step, also includes challenges.  To linearize the nonlinear dynamics, we take the 
Taylor expansion of the M-H step and approximate it as a linear step.  The challenge here is that the coefficients of this expansion are random variables that can be arbitrarily large, therefore bounding the error is not trivial.    Also, the truncation of the proposal leads to a spin vector that does not stay on the sphere.   However, this displacement from the sphere is small, converging to zero as the size of the proposal tends towards zero.

Note that in the weak convergence result of M-H dynamics to diffusion processes \cite{roberts1997weak,breyer2000metropolis,mattingly2012diffusion}, the assumption of equilibrium is essential to bound the error terms. The result in this paper only assumes that the M-H dynamics (and thus the SDE system) start from a deterministic initial condition satisfying a certain regularity condition.  While the initial condition is assumed smooth, both the M-H and SDE dynamics immediately produce fluctuations, and the resulting trajectories are only close to the smooth deterministic PDE solution and not smooth themselves for all time.  Therefore, standard energy bounding techniques cannot be used.  To bound the error terms, we utilize scalings that are worse than those in the previously mentioned papers and are likely not optimal. We  use numerical simulations to explore how tight these bounds appear to be.

The remainder of the paper is as follows.  In Section \ref{sec:results} we present the main results in two parts.  First, we state the convergence of M-H dynamics to the SDE system \eqref{eq:Langevin} as the proposal size of M-H step goes to zero, then we state the convergence of the SDE system \eqref{eq:Langevin} to the deterministic PDE \eqref{eq:harmonic} as the lattice size goes to infinity and temperature to zero.  The key steps of the proofs are given in Sections \ref{sec:SDE} and \ref{sec:PDE} for the more complicated classical Heisenberg model from $\mathbb{T}^1 \to \mathbb{S}^2$ with details appearing in the Appendix. The proof for the XY model follows similarly.  For the M-H to SDE \eqref{eq:Langevin} proof in Section \ref{sec:SDE}, we apply a similar approach as in \cite{mattingly2012diffusion}, by first Taylor expanding the M-H step, keeping only the first three terms, then computing the required conditional expectations with respect to the Gaussian random variables to obtain the drift and diffusion terms of an Euler step for the diffusion process.  Then, the difference between the M-H and SDE dynamics in $L^2$ norm is bounded by a Gr\"onwall inequality.  For the SDE \eqref{eq:Langevin} to PDE \eqref{eq:harmonic} proof in Section \ref{sec:PDE}, we compare the SDE system with the finite difference approximation of the harmonic map heat flow equation \eqref{eq:harmonic}. The difference between the SDE and ODE system is governed by another diffusion process. We will rescale this process and show the rescaled error is bounded for a long time using stopping time.  These convergence results are compared to the convergence measured from the results of numerical simulations of the system in Section \ref{sec:numerical}.  Conclusions are presented in Section \ref{sec:conclusions}.

\begin{remark}
 We only show the case $\mathbb{T}^1 \to \mathbb{S}^2$. The calculation could be generalized for other cases of $\mathbb{T}^d \to \mathbb{S}^2$ quite similarly.
\end{remark}

%
\section{Main Results\label{sec:results}}
%


In this section we will explain how we apply the M-H algorithm to the XY and classical Heisenberg models, and state our main results.  Our first result is that the M-H dynamics is close to a stochastic Euler scheme for the SDE \eqref{eq:Langevin} in It\^o form. The bound on the error between the M-H dynamics and the SDE \eqref{eq:Langevin} is accomplished using arguments similar to the convergence of the stochastic Euler method. Our second result bounds the error between the SDE system and the finite difference approximation of the harmonic map heat flow equation~\eqref{eq:harmonic}.

Throughout the paper, we adopt the following notation.  We use the symbol $\sim$ when describing random variables.  For example, $z\sim \N(0,1)$ denotes that random variable $z$ is distributed with a normal distribution with mean 0 and variance 1.  For approximating deterministic functions, we use the notation $f =  O(\epsilon)$ as $\epsilon\to 0$ to indicate that $f$ is of lower order than $\epsilon$, specifically that $\lim_{\epsilon\to 0} | f/\epsilon | < \infty$.  When we approximate random variables, we use the notation $\approx$ as the difference between the variable and its approximation could be extremely large in a single realization, and therefore the notion of asymptotic approximation does not hold.  Therefore, when using the notation that random variable $z\approx O(\epsilon)$ we mean that the random variable has a deterministic prefactor that this $O(\epsilon)$. 

\subsection{Metropolis-Hastings step. \label{sec:MHsetup}}
Here, we explicitly state the M-H dynamics for the XY and classical Heisenberg models  with Hamiltonian given by \eqref{eq:H} for the case $d=1$.

Consider a set of spins evolving in time, $\sigma_i^n$ for particles $i=1 \dots N$ and time step $n\ge 0$, with time step size $\delta t$. To create the proposal, take the normal random vector
\[ w_i^n = \begin{pmatrix} z_1 \\ z_2  \end{pmatrix}, \quad \textrm{with }z_1,z_2 \sim \N(0,1) \] for the XY model and three-dimensional normal random vector
\[
w_i^n = \begin{pmatrix} z_1 \\ z_2 \\z_3 \end{pmatrix}, \quad \textrm{with }z_1,z_2,z_3 \sim \N(0,1),
\]
for the classical Heisenberg model.  Then project this noise vector onto the tangent plane of $\sigma_i^n$, forming the random vector 
\begin{equation}\label{eq:noise_projection}
\nu_i^n = P_{\sigma_i^n}^\perp(w_i^n) = w_i^n - (w_i^n,\sigma_i^n) \sigma_i^n.
\end{equation}
Since we are proving a trajectory-wise convergence result, we must imbed the M-H algorithm and the SDE dynamics in the same probability space. To this end, we define
\[
w_i^n \equiv \frac{W_i((n+1) \delta t) - W_i(n \delta t)}{\sqrt{\delta t}} ,
\]
where $W_i, 1 \le i \le N$ are the independent Brownian motions in \eqref{eq:Langevin}.

In the M-H algorithm, the intuitive idea for a proposal, $\tilde{\sigma}_i^n$, on a manifold at time-step $n$ is the exponential map
\begin{equation} \label{eq:expMap}
{\tilde{\sigma}_i^n} = \exp_{\sigma_i^n} (\varepsilon \nu_i^n) = \gamma_{\varepsilon \nu_i^n} (1),
\end{equation}
where $\gamma_{\varepsilon \nu_i^n}$ is the geodesic satisfying the nonlinear ODE $\nabla_{\dot \gamma} \dot \gamma = 0$ with $\nabla$ the affine connection on the manifold and $\varepsilon$ is the proposal size.  In practice, using the proposal 
\[
 \tilde{\sigma}_i^n =  \frac{\sigma_i^n + \varepsilon \nu_i^n}{\| \sigma_i^n + \varepsilon \nu_i^n \|}
\]
is computationally simpler and, as we will show, leads to the same convergence result.

The values $\sigma^n$ and $\tilde{\sigma}^n$ are used to denote the total spin configuration $\sigma_i^n, 1 \le i \le N$ at time step $n$ and the total proposal spin configuration $\tilde{\sigma}_i^n, 1 \le i \le N$. The proposal $\tilde{\sigma}^n$ is accepted with probability
\begin{equation} \label{eq:accept}
\alpha = 1 \wedge e^{-\beta\delta H},
\end{equation}
and rejected otherwise, where 
\begin{equation} \label{eq:dH}
\begin{aligned}
 \delta H = H(\tilde{\sigma}^n) - H(\sigma^n) = \sum_{j=1}^N \frac{\partial H}{\partial \sigma_j^n} \cdot (\tilde{\sigma}_j^n - \sigma_j^n) + 2J \sum_{j=1}^N (\tilde{\sigma}_j^n - \sigma_j^n) \cdot (\tilde{\sigma}_j^n - \sigma_j^n) \\
 - J \sum_{j=1}^N (\tilde{\sigma}_j^n - \sigma_j^n) \cdot (\tilde{\sigma}_{j+1}^n - \sigma_{j+1}^n +\tilde{\sigma}_{j-1}^n - \sigma_{j-1}^n )
 \end{aligned}
\end{equation}
is the difference between the Hamiltonian \eqref{eq:H} of the proposal $\tilde{\sigma}^n$ and of the current spin configuration $\sigma^n$. Then 
\[
 \sigma^{n+1} = \kappa_n \tilde{\sigma}^n + (1-\kappa_n) \sigma^n, \quad \kappa_n \sim \textrm{Bernoulli}(\alpha(\tilde{\sigma}^n,\sigma^n)).
\]

Repeating the proposal and accept/reject steps, we create a discrete Markov process at time steps $n+1,n+2,\ldots$ and we will show the convergence of the Markov chain to the solution to the Langevin SDE system \eqref{eq:Langevin}.

\begin{remark}
In fact, either choice of the following projection in equation \eqref{eq:noise_projection} gives us the same result for the classical Heisenberg model
\[
	 \textrm{P}_{\sigma_i^n}^\perp ( w_i^n) = \begin{cases}
	& \sigma_i^n \times w_i^n	\\
	& - \sigma_i^n \times ( \sigma_i^n \times w_i^n ) = w_i^n - \sigma_i^n (\sigma_i^n)^T w_i^n  \end{cases}
\]
as both lead to random walks on the sphere (see Appendix \ref{Appendix:diffusionsph}).
\end{remark}

\subsection{Convergence of Metropolis dynamics to SDE system.}
First we will show the convergence from the M-H dynamics to the Langevin SDE dynamics with a fixed number of particles $N$ as the proposal size $\varepsilon \to 0$. Intuitively, using the Taylor series truncation of the proposal, the approximation of one M-H step leads to an expression that looks like one Euler step for simulating the SDE \eqref{eq:Langevin} in It\^o form.

Let $\mathcal{F}_t$ denote the filtration generated by the set of Brownian motions $W_i(t)$, i=1\dots N, in \eqref{eq:Langevin} and Bernoulli random variables $\kappa_n$, $n=1\dots t/\delta t$.  We denote the conditional expectation $\E[]{\cdot | \mathcal{F}_{ t}}$ by $\E{\cdot}$.

The drift over one step of the Metropolis-Hastings algorithm for the $i$-th particle for small $\varepsilon$ is approximated by
\begin{equation}
\E{\sigma^{n+1}_i-\sigma^n_i} \approx -\frac 12 \beta \varepsilon^2  \textrm{P}_{\sigma^n_i}^{\perp}
\left( \frac{\partial H}{\partial \sigma^n_i} \right) - \varepsilon^2 \sigma^n_i,
\end{equation}
where $\textrm{P}_{\sigma^n_i}^{\perp} = I -  \sigma_i^n (\sigma_i^n)^T$ is the projection onto the tangent plane of $\sigma_i^n$.

Denoting the noise contribution over one step as
\begin{equation} \label{eq:gamma}
\Gamma_i^n \equiv \sigma_i^{n+1} - \sigma_i^n - \E{\sigma_i^{n+1} - \sigma_i^n},
\end{equation}
it is approximated by
\begin{equation}
\Gamma_i^n \approx \varepsilon \nu_i^n = \varepsilon \textrm{P}_{\sigma^n_i}^{\perp} (w_i^n).
\end{equation}
Thus, one step of the Metropolis-Hastings algorithm is approximately given by
\begin{equation} \label{eq:Euler step}  
\sigma_i^{n+1} - \sigma_i^n \approx -\frac 12 \beta \varepsilon^2 \textrm{P}_{\sigma^n_i}^{\perp}
\left( \frac{\partial H}{\partial \sigma^n_i} \right) - \varepsilon^2 \sigma^n_i +   \textrm{P}_{\sigma^n_i}^{\perp} (\ep w_i^n).
\end{equation}
Defining $\beta \varepsilon^2 = N \delta t$ where $\delta t$ is the time step size, the above equation changes to
\begin{equation}\label{eq:Euler step dt} 
 \sigma_i^{n+1} \approx \sigma_i^n  -\frac 12 N \textrm{P}_{\sigma^n_i}^{\perp}
\left( \frac{\partial H}{\partial \sigma^n_i} \right) \delta t - \frac N \beta \sigma^n_i \delta t +   \textrm{P}_{\sigma^n_i}^{\perp} \left( \sqrt{\frac N\beta}  w_i^n \sqrt{\delta t} \right).
\end{equation}
Since $\frac{\partial H}{\partial \sigma^n_i} = 2J (2\sigma_i^n - \sigma_{i+1}^n - \sigma_{i-1}^n)$ and $J=N$ when $d=1$, the above is the Euler step for the Langevin SDE \eqref{eq:Langevin} in It\^o interpretation
\begin{equation} \label{eq:SDE}
\df \sigma_i = \textrm{P}_{\sigma_i}^{\perp} \left( \Delta_N \sigma_i \right) \df t - \frac N \beta \sigma_i \df t + \textrm{P}_{\sigma_i}^\perp \left( \sqrt{\frac N \beta} \df W_i \right).
\end{equation}
This intuitive idea leads to the first result:
\begin{thm} \label{thm:1}
  Define the piecewise constant interpolation of M-H dynamics as $\bar{\sigma}_i(t)$,
  \begin{equation}
  \bar{\sigma}_i (t) = \sigma_i^n \quad n \delta t \le t < (n+1) \delta t,
  \end{equation}
  and $\sigma_i(t)$ as the solution for the Langevin SDE system \eqref{eq:SDE} with initial condition $\| \sigma_i(0) \| =1$, $1 \le i \le N$.
  If the noise used in the proposal for each M-H step is related to the Wiener processes driving the SDE as 
   $\varepsilon w_i^n = \sqrt{N\beta^{-1}} \left[ W_i((n+1)\delta t) -W_i(n \delta t) \right]$, then we have the following strong convergence result:
  \begin{equation}
  \E[]{ \sup_{0 \le s \le t} \| \sigma_i(s) - \bar{\sigma}_i(s)\|^2 } \le C_1 \sqrt{\delta t} \exp(C_2 T), \quad t \in [0,T],1 \le i \le N,
  \end{equation}
  for any $T \in (0,\infty)$, where $C_1,C_2$ are functions of $N,\beta,J,T$ and independent of the choice of $i$ and $\delta t$.
\end{thm}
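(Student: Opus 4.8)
The plan is to follow the standard template for strong ($L^2$, sup-in-time) convergence of a stochastic Euler scheme, with the twist that the one-step increment comes from the M-H rule rather than an exact Euler--Maruyama step; the extra terms produced by the M-H accept/reject mechanism are what force the (suboptimal) rate $\sqrt{\delta t}$. As preliminaries I would record two facts: (a) since $\kappa_n\in\{0,1\}$ and $\sigma_i^0\in\mathbb{S}^2$, every $\sigma_i^n$ stays \emph{exactly} on $\mathbb{S}^2$, and the solution of \eqref{eq:SDE} also stays on $\mathbb{S}^2$ (this is where the It\^o correction $-N\beta^{-1}\sigma_i\,\df t$ is used, matching the norm-preserving Stratonovich form \eqref{eq:Langevin}); hence existence/uniqueness and all needed moments for \eqref{eq:SDE} hold, and every coefficient below is evaluated in a fixed compact set on which $\textrm{P}^\perp$ is smooth and $\|\partial H/\partial\sigma_j^n\|\le 8J$. (b) Since $\nu_i^n\perp\sigma_i^n$ and $\|\sigma_i^n\|=1$, one has $\|\sigma_i^n+\varepsilon\nu_i^n\|^2=1+\varepsilon^2\|\nu_i^n\|^2\ge 1$, so the normalization defining $\tilde\sigma_i^n$ never blows up and the Taylor expansion $\tilde\sigma_i^n-\sigma_i^n=\varepsilon\nu_i^n-\tfrac12\varepsilon^2\|\nu_i^n\|^2\sigma_i^n+\varepsilon^3 r_i^n$ is valid with $r_i^n$ polynomial in $w_i^n$ (hence with all moments finite, bounded by constants depending only on $N$). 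Substituting into \eqref{eq:dH} gives $\beta\,\delta H=\beta\varepsilon\,h_1+\beta\varepsilon^2\,h_2+\beta\varepsilon^3 h_3$, where $h_1=\sum_j(\partial H/\partial\sigma_j^n)\cdot\nu_j^n$ is odd in $w^n$, $h_2$ is even in $w^n$, and all $h_k$ have Gaussian moments controlled by constants depending on $N,J$.

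\textbf{Step 1 (drift and diffusion of one step).} Fix $n$, condition on $\mathcal{F}_{n\delta t}$, write $\sigma_i^{n+1}-\sigma_i^n=\kappa_n(\tilde\sigma_i^n-\sigma_i^n)$, and take $\mathbb{E}_n[\cdot]$ by first averaging over $\kappa_n$ given $w^n$ (replacing $\kappa_n$ by $\alpha=1-(\beta\,\delta H)^+ + O((\beta\,\delta H)^2)$, using $1\wedge e^{-x}=1-(1-e^{-x})^+$), then over the Gaussian. Using $\mathbb{E}_n[\nu_i^n]=0$, $\mathbb{E}_n[\|\nu_i^n\|^2]=2$, $\mathbb{E}_n[\nu_i^n(\nu_i^n)^T]=\textrm{P}^\perp_{\sigma_i^n}$, independence of $\nu_j^n$ across $j$, the elementary identity $\mathbb{E}[X^+Y]=\tfrac12\mathbb{E}[XY]$ for jointly Gaussian mean-zero $(X,Y)$, and the parity of $h_1,h_2$ (which kills all odd-order remainders that would otherwise hit the drift), I get, after inserting $\beta\varepsilon^2=N\delta t$, $J=N$, and the identity $\tfrac12 N\,\textrm{P}^\perp_{\sigma_i^n}(\partial H/\partial\sigma_i^n)=-\textrm{P}^\perp_{\sigma_i^n}(\Delta_N\sigma_i^n)$:
\[
\mathbb{E}_n[\sigma_i^{n+1}-\sigma_i^n]=\Big(\textrm{P}^\perp_{\sigma_i^n}(\Delta_N\sigma_i^n)-\tfrac N\beta\sigma_i^n\Big)\delta t+\rho_i^n=:b_i(\sigma^n)\,\delta t+\rho_i^n,\qquad \|\rho_i^n\|\le C(N,\beta,J)\,\delta t^{3/2}.
\]
For the fluctuation $\Gamma_i^n:=\sigma_i^{n+1}-\sigma_i^n-\mathbb{E}_n[\sigma_i^{n+1}-\sigma_i^n]$, I peel off the leading piece $\varepsilon\nu_i^n=\textrm{P}^\perp_{\sigma_i^n}(\sqrt{N/\beta}\,[W_i((n{+}1)\delta t)-W_i(n\delta t)])$, which is \emph{exactly} the Euler noise of \eqref{eq:SDE} by the imposed coupling; the remainder $\tilde\Gamma_i^n:=\Gamma_i^n-\varepsilon\nu_i^n$ satisfies $\mathbb{E}_n[\tilde\Gamma_i^n]=0$ and, since its main contribution is the rejection term $-(1-\kappa_n)\varepsilon\nu_i^n$ with $\mathbb{E}_n[1-\kappa_n\mid w^n]=(1-e^{-\beta\,\delta H})^+=O(\beta\varepsilon|h_1|)$,
\[
\mathbb{E}_n[\|\tilde\Gamma_i^n\|^2]\ \lesssim\ \varepsilon^2\,\mathbb{E}_n\big[\beta\varepsilon|h_1|\,\|\nu_i^n\|^2\big]+\varepsilon^4\ \le\ C(N,\beta,J)\,\delta t^{3/2}.
\]
This $\delta t^{3/2}$ (rather than the $\delta t^2$ one would hope for from a clean Euler step) is precisely where optimality is lost.

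\textbf{Step 2 (accumulation via Gr\"onwall).} Set $e_i^n:=\sigma_i^n-\sigma_i(n\delta t)$ and subtract the exact increment $\sigma_i((n{+}1)\delta t)-\sigma_i(n\delta t)=\int_{n\delta t}^{(n+1)\delta t}b_i(\sigma(s))\,\df s+\int_{n\delta t}^{(n+1)\delta t}g_i(\sigma(s))\,\df W_i(s)$, with $g_i(\sigma)=\sqrt{N/\beta}\,\textrm{P}^\perp_{\sigma_i}$, from the one-step expansion of Step 1. Writing $e_i^{n}$ as a telescoping sum, the pieces are: (i) Lipschitz-in-$\max_j\|e_j^n\|$ terms from $b_i(\sigma^n)-b_i(\sigma(n\delta t))$ and $g_i(\sigma^n)-g_i(\sigma(n\delta t))$, with Lipschitz constant $L(N,\beta,J)$ since we stay in a compact region; (ii) the martingale $\sum_n\tilde\Gamma_i^n$; (iii) the predictable error $\sum_n\rho_i^n$, of size $\le(T/\delta t)\cdot C\delta t^{3/2}=O(\sqrt{\delta t})$; (iv) the usual Euler time-discretization errors $\int (b_i(\sigma(n\delta t))-b_i(\sigma(s)))\,\df s$ and $\int (g_i(\sigma(n\delta t))-g_i(\sigma(s)))\,\df W_i(s)$, controlled by the modulus-of-continuity estimate $\mathbb{E}[\sup_{|s-s'|\le\delta t}\|\sigma(s)-\sigma(s')\|^2]=O(\delta t\log(1/\delta t))$ valid for the bounded-coefficient SDE \eqref{eq:SDE}. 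Squaring, applying Doob's $L^2$ maximal inequality to every martingale part, and summing conditional second moments yields, for $\phi(t):=\mathbb{E}[\sup_{s\le t}\max_i\|e_i(s)\|^2]$, an inequality of the form $\phi(t)\le C_1(N,\beta,J,T)\sqrt{\delta t}+C_2(N,\beta,J)\int_0^t\phi(s)\,\df s$, so Gr\"onwall gives $\phi(T)\le C_1\sqrt{\delta t}\,e^{C_2 T}$. Finally $\|\sigma_i(s)-\bar\sigma_i(s)\|\le\|\sigma_i(s)-\sigma_i(\eta(s))\|+\|e_i^{\lfloor s/\delta t\rfloor}\|$ with $\eta(s)=\delta t\lfloor s/\delta t\rfloor$; the first term contributes the $O(\delta t\log(1/\delta t))$ piece, which is absorbed into $C_1\sqrt{\delta t}$, proving the theorem.

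\textbf{Main obstacle.} The delicate step is Step 1: the Taylor coefficients $r_i^n,h_k$ are unbounded random variables, the acceptance probability $\alpha=1\wedge e^{-\beta\,\delta H}$ is only Lipschitz (kink at $\delta H=0$), and one must extract genuine cancellations --- via the Gaussian parity of $h_1,h_2$ and the identity $\mathbb{E}[X^+Y]=\tfrac12\mathbb{E}[XY]$ --- rather than bounding term by term, since crude bounds would corrupt the drift. Carefully tracking which contributions are $O(\delta t)$ (the $\beta\varepsilon^2=N\delta t$ terms, which assemble into $b_i$) versus $O(\delta t^{3/2})$ (the $\beta\varepsilon^3$ terms and the rejection fluctuation, which are error) is where the real work lies; by comparison the Gr\"onwall closure in Step 2 is routine, essentially the standard argument for Euler--Maruyama strong convergence.
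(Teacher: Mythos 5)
Your proposal is correct and follows the same overall skeleton as the paper: Taylor-expand the normalized proposal, compute the conditional drift $-\tfrac12\beta\varepsilon^2\,\textrm{P}^\perp_{\sigma_i^n}(\partial H/\partial\sigma_i^n)-\varepsilon^2\sigma_i^n$ with an $O(\varepsilon^3)$ error and identify the fluctuation as $\varepsilon\nu_i^n$ plus a mean-zero remainder whose conditional second moment is $O(\varepsilon^3)$ (this remainder's accumulated variance, $(T/\delta t)\cdot O(\varepsilon^3)=O(\sqrt{\delta t})$, is exactly where both you and the paper lose the optimal rate), and then close with a stochastic-Euler Gr\"onwall argument upgraded to a sup-in-time bound via Doob's maximal inequality. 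The one genuinely different ingredient is the central Gaussian computation: the paper conditions on $r_2,R_i^n$ and applies the exact identity $\E[]{z(1\wedge e^{az+b})}=ae^{a^2/2+b}\Phi(-b/|a|-|a|)$ (Lemma 2.4 of \cite{mattingly2012diffusion}), then uses $\E[]{\Phi(Z)}=\tfrac12$ for mean-zero Gaussian $Z$; you instead linearize the acceptance probability as $1-(\beta\,\delta H)^++O((\beta\,\delta H)^2)$ and invoke $\E[]{X^+Y}=\tfrac12\E[]{XY}$ for jointly Gaussian mean-zero pairs. The two devices extract the same factor of $\tfrac12$; yours is more elementary but obliges you to verify that the $O((\beta\,\delta H)^2)$ remainder (a polynomial in Gaussians with an $\varepsilon^2$ prefactor) has moments small enough not to pollute the $O(\varepsilon^2)$ drift, which it does. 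A second, minor difference: you Gr\"onwall the grid-point error $e_i^n=\sigma_i^n-\sigma_i(n\delta t)$ and reinstate the continuous-time gap via a modulus-of-continuity estimate $O(\delta t\log(1/\delta t))$, whereas the paper runs Gr\"onwall directly on $\E[]{\|\sigma_i(s)-\bar\sigma_i(s)\|^2}$ and bounds the sub-interval contributions by $C\delta t$ via It\^o isometry and boundedness of the coefficients on the sphere, avoiding the modulus-of-continuity lemma; both variants are sound and both are absorbed into $C_1\sqrt{\delta t}$.
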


\begin{remark}
The equation \eqref{eq:SDE} is equivalent to the SDE in Stratonovich form \eqref{eq:Langevin}
which gives $\df \|\sigma_i \|^2 = 2 \sigma_i \cdot \df \sigma_i = 0$ to make $\sigma_i$ stay on the unit sphere.
\end{remark}

\begin{remark}
Theorem \ref{thm:1} is a trajectory-wise convergence result. 
\end{remark}

\subsection{Convergence of SDE system to the Harmonic map heat flow equation.}
Notice in the SDE \eqref{eq:SDE}, if $\beta$ is chosen to be $\beta = N^\gamma, \gamma >1$, formally the noise part disappears with
$N \to \infty$. This gives the idea of the second result:

\begin{thm}\label{thm:2}
  For the harmonic map heat flow equation \eqref{eq:harmonic} with periodic boundary conditions and initial condition satisfying
  \begin{equation}\label{eq:smoothness}
  \|\sigma(\cdot,0) \| =1, \quad \| \nabla \sigma(\cdot,0) \| \le \lambda,
  \end{equation}
  for some $\lambda$ as in \cite{guo1993landau}, the solution exists and is smooth. Denote the finite difference approximation of \eqref{eq:harmonic} as 
  \begin{equation} \label{eq:fde}
  \df \tilde{\sigma}_i = \textrm{P}_{\tilde{\sigma}_i}^\perp ( \Delta_N \tilde{\sigma}_i), \quad \| \tilde{\sigma}_i \| =1.
  \end{equation}
By \cite[Theorem~1]{weinan2001numerical}, the difference between the solution to this finite difference approximation and the PDE \eqref{eq:harmonic}, $ \| \tilde{\sigma}_i(t) - \sigma(i \delta x,t)\|$, goes to zero on any fixed time interval where the solution remains well defined, as the space discretization $\delta x = \frac 1N$ goes to zero.
  
  For any $0<p<\frac 12$, there exist a constant $\gamma >1, \beta =N^\gamma$ and constants $C_1,C_2$ independent of $N$, such that if \[ \left( \left( \frac{N}{\beta} \right)^{1-p}T + C_1 \frac 1N \left( \frac{N}{\beta} \right)^{1-2p}  \right) e^{C_2 T} \le 1, \] then the difference between the SDE \eqref{eq:SDE} and the finite difference approximation \eqref{eq:fde} has the following bound
  \begin{equation}
  \E[]{ \sup_{0 \le s \le T} \frac 1N \sum_{i=1}^N \| \sigma_i(s) - \tilde{\sigma}_i(s) \|^2} \le \left(\frac{N}{\beta}\right)^{p/2} .
  \end{equation}
\end{thm}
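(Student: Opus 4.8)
The plan is to control the very quantity in the conclusion, $Y(t):=\frac1N\sum_{i=1}^N\|\sigma_i(t)-\tilde\sigma_i(t)\|^2$, by a Gr\"onwall estimate carried out on a stopped process, then remove the stopping time. Setting $e_i:=\sigma_i-\tilde\sigma_i$ and applying It\^o's formula to $\|e_i\|^2$ using \eqref{eq:SDE} and \eqref{eq:fde}, summing over $i$ and dividing by $N$, one obtains
\[
\df Y=\left[\frac{2}{N}\sum_i e_i\cdot\Big(\textrm{P}_{\sigma_i}^\perp(\Delta_N\sigma_i)-\textrm{P}_{\tilde\sigma_i}^\perp(\Delta_N\tilde\sigma_i)\Big)-\frac{N}{\beta}Y+\frac{2N}{\beta}\right]\df t+\df M_t,
\]
where $\frac{2N}{\beta}\df t$ is the It\^o correction (the trace of the projected noise covariance) and $M_t=\frac{2}{N}\sum_i\int_0^t e_i\cdot\textrm{P}_{\sigma_i}^\perp(\sqrt{N/\beta}\,\df W_i)$ is a martingale with $\df\langle M\rangle_t\le\frac{4}{\beta}Y\,\df t$. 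The algebraic engine is the pair of identities $e_i\cdot\sigma_i=\tfrac12\|e_i\|^2$ and $e_i\cdot\tilde\sigma_i=-\tfrac12\|e_i\|^2$, valid since $\|\sigma_i\|=\|\tilde\sigma_i\|=1$: they immediately give $-\frac{N}{\beta}e_i\cdot\sigma_i\le0$, and, writing $\textrm{P}_x^\perp(\Delta_N x)=\Delta_N x+g_x\,x$ with $g_{\sigma_i}:=-\sigma_i\cdot\Delta_N\sigma_i=\tfrac{N^2}{2}(\|\sigma_{i+1}-\sigma_i\|^2+\|\sigma_i-\sigma_{i-1}\|^2)\ge0$, they collapse the projection difference to $e_i\cdot\Delta_N e_i+\tfrac12(g_{\sigma_i}+g_{\tilde\sigma_i})\|e_i\|^2$. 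Summation by parts yields the dissipative term $\sum_i e_i\cdot\Delta_N e_i=-N^2\sum_i\|e_{i+1}-e_i\|^2\le0$, which will do the heavy lifting.

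I would then split the remaining nonlinear term. By the assumed smoothness of the PDE solution on $[0,T]$ we have $\Lambda:=\sup_{[0,T]}\|\nabla\sigma(\cdot,t)\|_\infty<\infty$, and via \cite[Theorem~1]{weinan2001numerical} the finite-difference flow inherits a discrete gradient bound $N\|\tilde\sigma_{i+1}-\tilde\sigma_i\|\le C\Lambda$ for $N$ large, so $g_{\tilde\sigma_i}\le\Lambda^2$ and $\frac1N\sum_i g_{\tilde\sigma_i}\|e_i\|^2\le\Lambda^2 Y$ is harmless. Writing $\sigma_{i+1}-\sigma_i=(e_{i+1}-e_i)+(\tilde\sigma_{i+1}-\tilde\sigma_i)$ gives $g_{\sigma_i}\le N^2(\|e_{i+1}-e_i\|^2+\|e_i-e_{i-1}\|^2)+2\Lambda^2$, so the genuinely dangerous piece is, after one more summation by parts,
\[
\frac{1}{N}\sum_i N^2\big(\|e_{i+1}-e_i\|^2+\|e_i-e_{i-1}\|^2\big)\|e_i\|^2\le\Big(\sup_j\|e_j\|^2\Big)\cdot 2N\sum_i\|e_{i+1}-e_i\|^2,
\]
i.e.\ it is $\sup_j\|e_j\|^2$ times the dissipation. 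Thus, as long as $\sup_j\|e_j(t)\|^2$ stays below $1$ it is absorbed, and we are left with $\df Y\le\big(C_2 Y+\frac{2N}{\beta}\big)\df t+\df M_t$ with $C_2$ a function of $\Lambda$ only (hence of $T,\lambda$), as the statement allows.

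To keep $\sup_j\|e_j\|^2<1$ I would use the stopping time suggested by the paper's "rescale and use a stopping time" remark: rescale by the target level and set $\tau:=\inf\{t:Y(t)>(N/\beta)^{p/2}\}$. The crude bound $\sup_j\|e_j(t)\|^2\le\sum_j\|e_j(t)\|^2=NY(t)$ together with a choice of $\gamma>1$ large enough that $N(N/\beta)^{p/2}\le1$ makes the absorption valid on $[0,\tau\wedge T)$; moreover $\langle M\rangle_{t\wedge\tau}\le\frac{4T}{\beta}(N/\beta)^{p/2}$ deterministically, so Doob/Burkholder--Davis--Gundy plus a Gr\"onwall argument (with $Y(0)=0$, the two flows starting from the common smooth initial data) give $\E[]{\sup_{0\le s\le T}Y(s\wedge\tau)}\lesssim\frac{N}{\beta}\,T\,e^{C_2T}$, the $\frac{2N}{\beta}$ source dominating. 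Finally I would remove the stopping time: since $\|\sigma_i\|=\|\tilde\sigma_i\|=1$ forces $Y\le4$ pointwise, $\E[]{\sup_{[0,T]}Y}\le\E[]{\sup_{[0,T]}Y(\cdot\wedge\tau)}+4\,\mathbb{P}(\tau\le T)$ with $\mathbb{P}(\tau\le T)\le(N/\beta)^{-p/2}\,\E[]{\sup Y(\cdot\wedge\tau)}$ by Markov's inequality; combining these, choosing $\gamma$, and grouping powers of $N/\beta$ produces a bound of the form $\big((N/\beta)^{1-p}T+C_1N^{-1}(N/\beta)^{1-2p}\big)e^{C_2T}\cdot(N/\beta)^{p/2}$, so the stated hypothesis forces $\E[]{\sup Y}\le(N/\beta)^{p/2}$.

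The main obstacle is the nonlinear term carrying the factor $N^2$ from the discrete Laplacian: $\tilde\sigma$ has an $O(1/N)$ discrete gradient, but the SDE flow $\sigma$ has no pathwise control on $N\|\sigma_{i+1}-\sigma_i\|$, so $\textrm{P}_\cdot^\perp(\Delta_N\,\cdot)$ is not globally Lipschitz with an $N$-independent constant and a naive Gr\"onwall fails — this is exactly what forces the stopping-time/rescaling device. The very lossy step $\sup_j\|e_j\|^2\le NY$ (all the error mass on one site) is responsible for the scalings being far from optimal; a discrete Sobolev/Agmon bound on $\sup_j\|e_j\|$ would improve them but needs a priori control of the error's discrete $H^1$ norm, which is not available here. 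Secondary technical points are making the uniform-in-$[0,T]$ gradient estimate on $\tilde\sigma$ rigorous from the cited convergence result, and bookkeeping the martingale and $\mathbb{P}(\tau\le T)$ contributions carefully enough to produce the precise second term $C_1N^{-1}(N/\beta)^{1-2p}$ in the hypothesis.
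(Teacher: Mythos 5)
Your proposal is correct and follows essentially the same strategy as the paper: It\^o's formula applied to $\|\sigma_i-\tilde\sigma_i\|^2$, the geometric identity $\textrm{P}_x^\perp(\Delta_N x)=\Delta_N x-(x\cdot\Delta_N x)x$ together with $\|\sigma_i\|=\|\tilde\sigma_i\|=1$ to collapse the projection difference, summation by parts for the dissipation, a crude $\ell^\infty\!\le N\cdot(\text{mean }\ell^2)$ bound to tame the quadratic-gradient nonlinearity, a stopping time to keep the error below the absorption threshold, Gr\"onwall, and Doob's inequality for the martingale. The one substantive difference is how the dangerous term $\sum_i N^2\|\sigma_{i\pm1}-\sigma_i\|^2(\sigma_i,e_i)$ is handled: the paper rescales $e_i=\epsilon^{-p}(\sigma_i-\tilde\sigma_i)$, converts this term into $C e^{3/2}$ via the condition $\epsilon^pN^{5/2}\le1$ (i.e.\ $\gamma\ge1+5/p$), and then uses $e^{3/2}\le e$ on $\{e\le1\}$; you instead keep the full dissipation $-2N\sum_i\|e_{i+1}-e_i\|^2$ and absorb the nonlinearity into it whenever $\sup_j\|e_j\|^2\le1$, which your stopping level and the milder condition $N(N/\beta)^{p/2}\le1$ (i.e.\ $\gamma\ge1+2/p$) guarantee. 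Your route is a bit cleaner and buys a slightly smaller admissible $\gamma$, though this is invisible in the theorem as stated since only existence of some $\gamma>1$ is claimed. Two small bookkeeping points to watch: the absorption actually needs $\sup_j\|e_j\|^2\le\tfrac12$ because of the factor $2$ in $\|\sigma_{i+1}-\sigma_i\|^2\le2\|e_{i+1}-e_i\|^2+2\|\tilde\sigma_{i+1}-\tilde\sigma_i\|^2$ (harmless, just adjust the threshold), and the BDG bound $\E[]{\sup|M|}\lesssim\sqrt{T/\beta}\,(N/\beta)^{p/4}$ produces a power of $N/\beta$ that does not literally match the second term $C_1N^{-1}(N/\beta)^{1-2p}$ in the hypothesis; since $N/\beta<1$ the stated hypothesis still implies the condition you actually need, but you should say so explicitly. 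Neither issue is a gap in the argument.
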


Since $(N/\beta)^{p/2}$ is equivalent to $N^{(1-\gamma)p/2}$ with the defined scaling of $\beta=N^\gamma$, and $\gamma>1$, the quantity 
$(N/\beta)^{p/2}$ is small when $N$ is large, going to zero as $N\to\infty$.  Since the solution to the PDE is smooth, satisfying the conditions in \eqref{eq:smoothness}, the finite difference approximation is close to the PDE solution as shown in \cite{weinan2001numerical}, going to zero as $N=1/\delta x \to \infty$.   Therefore, at a fixed time $T$, the difference between the SDE system \eqref{eq:SDE} and the PDE \eqref{eq:harmonic} goes to zero as $N\to\infty$.

\begin{remark} \label{remark:beta}
 The choice of $\gamma$ depends on $p$ through the following relation
 \[
  \left( \frac N \beta \right)^{p/2} N^3 \le 1
 \]
 which is equivalent to 
 \[
 \gamma \ge 1 + \frac{6}{p}
 \]
 since $\beta = N^\gamma$.
 For a uniform bound in an interval $0 \le t \le T$, we need $p<\frac 12$ which requires $\gamma>13$. For a bound at a fixed time $t \in [0,T]$, we only need $p<1$ and thus require $\gamma>7$.  We do not believe this bound is sharp for the convergence result, which will be addressed in Section \ref{sec:numerical} when we perform numerical simulations of these models.  We find that $\gamma = \frac 32$ is enough to see convergence in our numerical simulations.  Using an approximation of being near equilibrium, it is possible to use sharper bounds on summation in $N$ from the regularity properties of the invariant measure to improve $\gamma$, but we do not pursue such techniques here.
\end{remark}

%
\section{Metropolis-Hastings dynamics to SDE system\label{sec:SDE}}
%

In this section the convergence of the M-H algorithm to the SDE \eqref{eq:SDE} for the classical Heisenberg model will be shown by calculating the drift and diffusion of one M-H step, which is approximately a stochastic Euler step for \eqref{eq:SDE}. Then the error estimation of stochastic Euler's method is used to give a bound on the difference between M-H and SDE dynamics with proposal size $\varepsilon \to 0$. Here the basic steps are outlined, the detail of error estimation is given in Appendix \ref{Appendix:dirft and diffusion}.

\begin{remark}
 The proof for the XY model will be similar, one only needs to change the random vector $\nu_i^n$ on the tangent plane to a two-dimensional vector. 
\end{remark}

\subsection{Set-up.} \label{subsec:setup}
In the calculation to follow, we have the following assumptions and notations.
The number of the particles $N$ on unit length is fixed and the limiting case $\varepsilon \to 0$ is considered. We have $\beta,J$ as functions of $N$ so they are also regarded as constant. 
On the unit sphere, the proposal is given by the exponential map \eqref{eq:expMap} and could be approximated by normalizing $\sigma_i^n + \varepsilon \nu_i^n$
\[
 \tilde{\sigma}_i^n = \exp_{\sigma_i^n} (\varepsilon \nu_i^n) \approx \frac{\sigma_i^n + \varepsilon \nu_i^n}{\| \sigma_i^n + \varepsilon \nu_i^n \|}.
\]
By Taylor expanding $\frac{\sigma_i^n + \varepsilon \nu_i^n}{\| \sigma_i^n + \varepsilon \nu_i^n \|}$, the proposal $\tilde{\sigma}_i^n$ can be approximated by order $\varepsilon$ and $\varepsilon^2$ expansion
\begin{equation} \label{eq:approx}
 \begin{aligned}
 \tilde{\sigma}_i^n & \approx \sigma_i^n + \varepsilon \nu_i^n,  \\
 \tilde{\sigma}_i^n & \approx \sigma_i^n + \varepsilon \nu_i^n -\frac 12 \varepsilon^2 (\nu_i^n \cdot \nu_i^n) \sigma_i^n.  
 \end{aligned}
\end{equation}

The proof of the following Lemma is shown in Appendix \ref{Appendix:dirft and diffusion}.
\begin{lemma} 
 Denote 
 \begin{align*}
  a_i^n & \equiv \tilde{\sigma}_i^n - \frac{\sigma_i^n + \varepsilon \nu_i^n}{\| \sigma_i^n + \varepsilon \nu_i^n \|},   \\
  c_i^n & \equiv \tilde{\sigma}_i^n - (\sigma_i^n + \varepsilon \nu_i^n), \\
  d_i^n & \equiv  \tilde{\sigma}_i^n - \left ( \sigma_i^n + \varepsilon \nu_i^n - \frac 12 \varepsilon^2 (\nu_i^n \cdot \nu_i^n) \sigma_i^n \right).
 \end{align*}
 Then $\E[]{\|a_i^n\|^k} \le A_k \varepsilon^{3k},\E[]{\|c_i^n\|^k} \le C_k \varepsilon^{2k}$ and $\E[]{\|d_i^n\|^k} \le D_k \varepsilon^{3k}$.
\end{lemma}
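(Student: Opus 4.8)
The plan is to exploit the orthogonality $\nu_i^n\perp\sigma_i^n$ (built into $\nu_i^n=P_{\sigma_i^n}^\perp(w_i^n)$): both the geodesic proposal and the normalized proposal lie in the two-plane $\mathrm{span}(\sigma_i^n,\nu_i^n)$ and have closed forms. Writing $t=\|\nu_i^n\|$ and $\hat\nu=\nu_i^n/t$, the exponential map on $\mathbb{S}^2$ is
\[
\tilde\sigma_i^n=\exp_{\sigma_i^n}(\varepsilon\nu_i^n)=\cos(\varepsilon t)\,\sigma_i^n+\sin(\varepsilon t)\,\hat\nu ,
\]
while, since $\|\sigma_i^n+\varepsilon\nu_i^n\|^2=1+\varepsilon^2 t^2$,
\[
\frac{\sigma_i^n+\varepsilon\nu_i^n}{\|\sigma_i^n+\varepsilon\nu_i^n\|}=\cos\phi\,\sigma_i^n+\sin\phi\,\hat\nu ,\qquad \phi=\arctan(\varepsilon t).
\]
With these identities each of $a_i^n,c_i^n,d_i^n$ reduces to a one-variable estimate in $\varepsilon t$. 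The one genuine obstacle is that $t=\|\nu_i^n\|$ is \emph{unbounded}---orthogonal projection is $1$-Lipschitz, so $t\le\|w_i^n\|$, but $w_i^n$ is Gaussian---hence I must not rely on smallness of $\varepsilon t$; instead I will use global polynomial Taylor-remainder inequalities and then take expectations using finiteness of every moment of $t$.

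For $a_i^n$, the two displayed unit vectors differ only in the angle they make with $\sigma_i^n$, so $\|a_i^n\|^2=2-2\cos(\varepsilon t-\phi)=4\sin^2\!\big(\tfrac{\varepsilon t-\phi}{2}\big)$ and thus $\|a_i^n\|\le|\varepsilon t-\arctan(\varepsilon t)|$. Since $\frac{d}{dx}(x-\arctan x)=\frac{x^2}{1+x^2}\le x^2$, integrating from $0$ gives $|x-\arctan x|\le x^3/3$ for $x\ge0$, so $\|a_i^n\|\le\tfrac13\varepsilon^3\|\nu_i^n\|^3$ and $\mathbb{E}\|a_i^n\|^k\le A_k\varepsilon^{3k}$ with $A_k=3^{-k}\mathbb{E}\|\nu_i^n\|^{3k}<\infty$.

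For $c_i^n$ and $d_i^n$, substituting the closed form of $\tilde\sigma_i^n$ and using $\nu_i^n\cdot\nu_i^n=t^2$ gives
\[
c_i^n=(\cos(\varepsilon t)-1)\,\sigma_i^n+\Big(\tfrac{\sin(\varepsilon t)}{t}-\varepsilon\Big)\nu_i^n ,\qquad
d_i^n=\big(\cos(\varepsilon t)-1+\tfrac12\varepsilon^2t^2\big)\sigma_i^n+\Big(\tfrac{\sin(\varepsilon t)}{t}-\varepsilon\Big)\nu_i^n ,
\]
and since $\sigma_i^n\perp\nu_i^n$ the two summands are orthogonal. The remainder bounds $|\cos x-1|\le x^2/2$, $|\cos x-1+x^2/2|\le x^4/24$, and $|\sin x-x|\le|x|^3/6$ (which gives $|\tfrac{\sin(\varepsilon t)}{t}-\varepsilon|\,t=|\sin(\varepsilon t)-\varepsilon t|\le\varepsilon^3t^3/6$) yield $\|c_i^n\|\le\tfrac12\varepsilon^2t^2+\tfrac16\varepsilon^3t^3$ and $\|d_i^n\|\le\tfrac1{24}\varepsilon^4t^4+\tfrac16\varepsilon^3t^3$. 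For $\varepsilon\le1$ these are at most $\varepsilon^2(\tfrac12t^2+\tfrac16t^3)$ and $\varepsilon^3(\tfrac1{24}t^4+\tfrac16t^3)$; raising to the $k$-th power, taking expectations, and using $t\le\|w_i^n\|$ with $w_i^n$ Gaussian shows $\mathbb{E}\|c_i^n\|^k\le C_k\varepsilon^{2k}$ and $\mathbb{E}\|d_i^n\|^k\le D_k\varepsilon^{3k}$, with $C_k,D_k$ depending only on $k$.

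Beyond the two closed-form identities everything is bookkeeping of powers of $\varepsilon$ and of moments of a chi-type variable; the point to be careful about is exactly the one flagged above, namely that the coefficients $\|\nu_i^n\|$ are not bounded, so the global remainder estimates (rather than asymptotic Taylor expansion) are what make the moment bounds go through. The identical argument, with $\nu_i^n$ taken two-dimensional, covers the XY model.
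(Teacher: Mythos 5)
Your proof is correct, and it takes a cleaner, partly different route than the paper's. For $a_i^n$ the two arguments essentially coincide: the paper bounds the chord by the difference of arc lengths $\varepsilon\|\nu_i^n\|-\arctan(\varepsilon\|\nu_i^n\|)$ and Taylor-expands $\arctan$ (with a slightly cruder remainder, $x^3/3+x^4$, versus your sharper $|x-\arctan x|\le x^3/3$ from integrating $x^2/(1+x^2)\le x^2$). For $c_i^n$ and $d_i^n$, however, the paper does not use the spherical closed form $\exp_{\sigma}(\varepsilon\nu)=\cos(\varepsilon\|\nu\|)\sigma+\sin(\varepsilon\|\nu\|)\hat\nu$; instead it routes everything through the normalized vector, Taylor-expanding the factor $(1+\varepsilon^2\|\nu_i^n\|^2)^{-1/2}$ with a globally valid second-order remainder $|\eta_i^n|\le\tfrac38\varepsilon^4\|\nu_i^n\|^4$, so that $d_i^n=-\tfrac{\varepsilon^3}{2}\|\nu_i^n\|^2\nu_i^n+(\sigma_i^n+\varepsilon\nu_i^n)\eta_i^n+a_i^n$ and $c_i^n$ picks up the extra $-\tfrac{\varepsilon^2}{2}\|\nu_i^n\|^2\sigma_i^n$ term; the moment bounds then follow from H\"older and the elementary inequality $\E[]{\|X+Y\|^k}\le C_k(\E[]{\|X\|^k}+\E[]{\|Y\|^k})$. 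Your version replaces that decomposition by exact trigonometric identities plus global remainder bounds for $\cos$ and $\sin$, exploiting the orthogonality of the $\sigma_i^n$- and $\nu_i^n$-components, and in particular never needs the $a_i^n$ estimate to control $c_i^n,d_i^n$. Both arguments hinge on exactly the point you flag: $\|\nu_i^n\|$ is unbounded, so one must use non-asymptotic remainder inequalities and then finiteness (uniform in $i,n$, since $\|\nu_i^n\|\le\|w_i^n\|$) of all Gaussian moments. Your use of $\varepsilon\le 1$ to absorb the $\varepsilon^4\|\nu_i^n\|^4$ term into the $\varepsilon^3$ scale is harmless, mirroring how the paper handles its own higher-order remainders; the same remark applies to the measure-zero case $\nu_i^n=0$, where all three error vectors vanish.
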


Using the approximation \eqref{eq:approx}, $\delta H$ in \eqref{eq:dH} can be written as
\begin{equation} \label{eq:H_approx}
  \begin{aligned}
  & \delta H = \varepsilon \frac{\partial H}{\partial \sigma_i^n} \cdot \nu_i^n + R_i^n + h_i^n \approx O(\varepsilon), \\
  & R_i^n \equiv \varepsilon \sum_{j \ne i} \frac{\partial H}{\partial \sigma_j^n} \cdot \nu_j^n  \approx O(\varepsilon), \\
  & h_i^n \equiv \sum_j \frac{\partial H}{\partial \sigma_j^n} \cdot c_j^n + 2J \sum_j \delta \sigma_j^n \cdot \delta \sigma_j^n - J \sum_{j} \delta \sigma_j^n \cdot ( \delta \sigma_{j+1}^n + \delta \sigma_{j-1}^n) \approx O(\varepsilon^2),
  \end{aligned}
\end{equation}
and we only keep the $\varepsilon$ term in $\delta H$ in the following calculation so $\delta H$ is approximated by a normal random variable. We are going to show the calculation for one specific particle $i$ so we take $i$-th term $\frac{\partial H}{\partial \sigma_i^n} \cdot \nu_i^n$ and the summation of $j \ne i$ terms as a single term $R_i^n$.

\subsection{Drift.} \label{subsec:drift}

\begin{proposition} \label{prop:theta}
  Let $\{ \sigma^n\}$ be the Markov chain given by the Metropolis-Hastings algorithm, and $\{\sigma_i^n\}$ the spin for $i$-th particle at time step
  $n$. Then
  \begin{equation}
  \E{\sigma^{n+1}_i-\sigma^n_i} = -\frac 12 \beta \varepsilon^2 \textrm{P}_{\sigma^n_i}^{\perp}
  \left( \frac{\partial H}{\partial \sigma^n_i} \right) - \varepsilon^2 \sigma^n_i + \theta_i^n,
  \end{equation}
  where the error term
  \begin{equation} \label{eq:theta}
   \theta_i^n \equiv \E{\sigma^{n+1}_i-\sigma^n_i} - \left( -\frac 12 \beta \varepsilon^2 \textrm{P}_{\sigma^n_i}^{\perp} \left( \frac{\partial H}{\partial \sigma^n_i} \right) - \varepsilon^2 \sigma^n_i \right)
  \end{equation}
  satisfies $\E[]{\|\theta_i^n\|^2} \le C \varepsilon^6$.
\end{proposition}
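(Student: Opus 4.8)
The starting point is that $\sigma_i^{n+1}-\sigma_i^n=\kappa_n(\tilde\sigma_i^n-\sigma_i^n)$ and that $\kappa_n$ has conditional mean $\alpha$ given $\mathcal F_t$ and the proposal noise; by the tower property this gives $\E{\sigma_i^{n+1}-\sigma_i^n}=\E{\alpha\,(\tilde\sigma_i^n-\sigma_i^n)}$, where throughout $\E{\cdot}=\E[]{\cdot\mid\mathcal F_t}$ amounts to conditioning on the current configuration $\sigma^n$, all of whose components are unit vectors. The plan is to replace both factors by their small-$\varepsilon$ expansions. For the increment I would use \eqref{eq:approx} together with the above Lemma to write $\tilde\sigma_i^n-\sigma_i^n=\varepsilon\nu_i^n-\tfrac12\varepsilon^2\|\nu_i^n\|^2\sigma_i^n+d_i^n$ with $\E[]{\|d_i^n\|^2}=O(\varepsilon^6)$. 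For the acceptance probability I would use the elementary identity $1\wedge e^{-x}=1-x^+ +e(x)$ with $0\le e(x)\le\tfrac12 x^2$, applied to $x=\beta\,\delta H$, and then \eqref{eq:H_approx} together with $|a^+-b^+|\le|a-b|$ to replace $(\beta\,\delta H)^+$ by $(\beta\,\delta H^{\mathrm{lin}}_i)^+$, where $\delta H^{\mathrm{lin}}_i:=\varepsilon\,\tfrac{\partial H}{\partial\sigma_i^n}\cdot\nu_i^n+R_i^n=\varepsilon\sum_j\tfrac{\partial H}{\partial\sigma_j^n}\cdot\nu_j^n$ is the part of $\delta H$ linear in $\varepsilon$, up to the $O(\varepsilon^2)$ remainder $h_i^n$.

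Multiplying out $\alpha\,(\tilde\sigma_i^n-\sigma_i^n)$ with these substitutions, exactly two terms survive at order $\varepsilon^2$. The first is $\E{-\tfrac12\varepsilon^2\|\nu_i^n\|^2\sigma_i^n}=-\varepsilon^2\sigma_i^n$, using $\E[]{\|\nu_i^n\|^2}=\operatorname{tr}\textrm{P}_{\sigma_i^n}^\perp=2$ (this is precisely the It\^o correction). The second is the cross term $-\E{(\beta\,\delta H^{\mathrm{lin}}_i)^+\,\varepsilon\nu_i^n}$. All remaining pieces — products containing $d_i^n$, products containing $e(\beta\,\delta H)$, products containing $\beta h_i^n$, and the product of $(\beta\,\delta H^{\mathrm{lin}}_i)^+$ with the $O(\varepsilon^2)$ part of the increment — are collected into $\theta_i^n$.

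To evaluate the cross term I would condition additionally on $\{\nu_j^n:j\ne i\}$. Given $\mathcal F_t$ and this data, $R_i^n$ is a fixed number while $\nu_i^n$ remains a centered Gaussian vector with covariance $\textrm{P}_{\sigma_i^n}^\perp$, so Gaussian integration by parts (Stein's identity, applied to the Lipschitz function $\nu\mapsto(\beta\varepsilon\,\tfrac{\partial H}{\partial\sigma_i^n}\cdot\nu+\beta R_i^n)^+$) yields
\[
\E[]{(\beta\,\delta H^{\mathrm{lin}}_i)^+\,\varepsilon\nu_i^n\ \big|\ \mathcal F_t,\{\nu_j^n\}_{j\ne i}}
=\beta\varepsilon^2\,\textrm{P}_{\sigma_i^n}^\perp\!\Big(\tfrac{\partial H}{\partial\sigma_i^n}\Big)\ \mathbb P\!\Big(\varepsilon\,\tfrac{\partial H}{\partial\sigma_i^n}\cdot\nu_i^n>-R_i^n\ \big|\ \{\nu_j^n\}_{j\ne i}\Big).
\]
Taking the outer expectation over $\{\nu_j^n\}_{j\ne i}$ and using that $R_i^n$ is symmetric (a mean-zero Gaussian) and independent of $\varepsilon\,\tfrac{\partial H}{\partial\sigma_i^n}\cdot\nu_i^n$, the probability averages to $\tfrac12$; hence the cross term equals $-\tfrac12\beta\varepsilon^2\,\textrm{P}_{\sigma_i^n}^\perp(\tfrac{\partial H}{\partial\sigma_i^n})$, which together with the first surviving term is exactly the asserted leading expression.

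Finally, to bound $\theta_i^n$ I would use that, since every $\sigma_j^n$ is a unit vector, the deterministic coefficients $\tfrac{\partial H}{\partial\sigma_j^n}=2J(2\sigma_j^n-\sigma_{j+1}^n-\sigma_{j-1}^n)$ and $\sigma_j^n$ are bounded in terms of $N$ and $J$ alone; consequently the Gaussian-built quantities $\nu_j^n$, $\delta H$, $\delta H^{\mathrm{lin}}_i$ and $h_i^n$ have all moments finite and of the expected order (for instance $\E[]{|\delta H|^{2k}}=O(\varepsilon^{2k})$), and each leftover term in $\theta_i^n$ is then estimated by Cauchy--Schwarz against these moment bounds and the Lemma's bounds on $\|c_i^n\|$ and $\|d_i^n\|$: every such term carries a deterministic prefactor of size $O(\varepsilon^3)$. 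Since these estimates hold uniformly over configurations on the sphere, $\|\theta_i^n\|\le C\varepsilon^3$ pointwise and hence $\E[]{\|\theta_i^n\|^2}\le C^2\varepsilon^6$. The main obstacle is the cross-term computation: the non-smoothness of $x\mapsto x^+$ rules out a naive Taylor expansion and forces the Lipschitz form of Gaussian integration by parts, and the clean constant $\tfrac12$ only emerges once the other particles' noise inside $\delta H$ (carried by $R_i^n$) is isolated by conditioning and its symmetry invoked; controlling the per-realization-large but expectation-small coefficients flagged in the introduction is the other delicate point, handled here by Cauchy--Schwarz against the moment bounds.
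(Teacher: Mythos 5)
Your proposal is correct, and for the decisive step — evaluating the cross term $\E{\varepsilon \nu_i^n \left(1\wedge e^{-\beta\delta H}\right)}$ — it takes a genuinely different route from the paper. The paper keeps the full nonlinearity $1\wedge e^{-x}$, decomposes $\nu_i^n=r_1b_1+r_2b_2$ in a tangent-plane basis, and applies the exact one-dimensional formula $\E[]{z(1\wedge e^{az+b})}=ae^{a^2/2+b}\Phi(-b/|a|-|a|)$ from \cite{mattingly2012diffusion}; it must then separately discard the exponential prefactor and the shift $-|a|$ inside $\Phi$, which in Appendix \ref{Appendix:dirft and diffusion} costs the tail-splitting estimate $\E[]{|e^Z-1|^4}\le C\varepsilon^4$ for a Gaussian $Z$ with $O(\varepsilon^2)$ mean and variance (the $\theta_4$ bound), before invoking $\E[]{\Phi(Z)}=\tfrac12$ for centered Gaussian $Z$. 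You instead linearize first, $1\wedge e^{-x}=1-x^++e(x)$ with $0\le e(x)\le x^2/2$, pass to the noise-linear part of $\delta H$ inside $(\cdot)^+$ via $|a^+-b^+|\le|a-b|$, and then compute the surviving term \emph{exactly} by Stein's identity for the Lipschitz map $\nu\mapsto(\beta\varepsilon\,\tfrac{\partial H}{\partial\sigma_i^n}\cdot\nu+\beta R_i^n)^+$ applied to the degenerate Gaussian $\nu_i^n\sim\N(0,\textrm{P}^\perp_{\sigma_i^n})$; your factor $\tfrac12$ comes from $\mathbb{P}\left(\varepsilon\sum_j\tfrac{\partial H}{\partial\sigma_j^n}\cdot\nu_j^n>0\right)=\tfrac12$, which is the same symmetry fact the paper uses, phrased as a probability rather than as $\E[]{\Phi}$. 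The trade is the explicit formula for the (standard but less elementary-looking) Lipschitz form of Gaussian integration by parts; in exchange your single quadratic remainder $e(\beta\delta H)$ plus the $h_i^n$ substitution error absorb what the paper splits into $\theta_3,\theta_4,\theta_5$, each handled by one Cauchy--Schwarz. The degenerate case $\textrm{P}^\perp_{\sigma_i^n}(\partial H/\partial\sigma_i^n)=0$ is harmless since the prefactor then vanishes. Your remainder bookkeeping is complete — the $d_i^n$, $e(\beta\delta H)$, $\beta h_i^n$, and $(\cdot)^+\times O(\varepsilon^2)$ products are exactly the leftovers, each with a deterministic $O(\varepsilon^3)$ conditional bound uniform over unit-sphere configurations, yielding $\E[]{\|\theta_i^n\|^2}\le C\varepsilon^6$ (the paper obtains the same via Jensen's inequality applied to each $\theta_k$).
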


In the calculation we keep the order $\varepsilon^2$ term. The remainder is order $\varepsilon^3$ and will be shown to be bounded in the error estimation for M-H and SDE dynamics. The basic steps are given in the following calcuation, for details of the error estimation see Appendix \ref{Appendix:dirft and diffusion}.

Since $\sigma_i^{n+1} = \exp_{\sigma_i^n} (\varepsilon \nu_i^n)$ with probability $1 \wedge e^{-\beta \delta H}$ and stay $\sigma_i^n$ otherwise,
\begin{equation}\label{eq:drift} 
\begin{aligned}
 & \E{\sigma^{n+1}_i - \sigma^n_i} \\ 
& =  \E{\left(\exp_{\sigma_i^n} (\varepsilon \nu_i^n) - \sigma_i^n \right) \left( 1 \wedge e^{-\beta \delta H} \right)} \\
&  
\approx  \E{ \left( \varepsilon \nu_i^n - \frac{\varepsilon^2}{2} ( \nu_i^n \cdot \nu_i^n) \sigma_i^n + d_i^n \right)  \left( 1 \wedge
    e^{-\beta \delta H} \right) } \\
& 
=\varepsilon \E{   \nu_i^n\left( 1 \wedge
    e^{-\beta \delta H} \right) } - \frac{\varepsilon^2}{2}\E{  ( \nu_i^n \cdot \nu_i^n) \sigma_i^n \left( 1 \wedge
    e^{-\beta \delta H} \right) } + \E{  d_i^n\left( 1 \wedge
    e^{-\beta \delta H} \right) }.
\end{aligned}\end{equation}
We drop the third term in the last line of \eqref{eq:drift} as it is an $\varepsilon^3$ term:
$$
\E[]{  \left\|d_i^n \left( 1 \wedge
    e^{-\beta \delta H} \right) \right\| } \le \E[]{ \|d_i^n\| } \le C \varepsilon^3,
$$
since $0 < | 1 \wedge e^{-\beta \delta H} | \le 1 $ for finite $N$. 
For the second term in the last line of \eqref{eq:drift}, since $1 \wedge e^{-\beta \delta H} \approx 1 + O(\ep)$ we have that
$$
\frac{\varepsilon^2}{2}\E{  ( \nu_i^n \cdot \nu_i^n) \sigma_i^n \left( 1 \wedge
    e^{-\beta \delta H} \right) }  = \E{\frac{\varepsilon^2}{2} ( \nu_i^n \cdot \nu_i^n) \sigma_i^n}
    + O(\varepsilon^3) = \varepsilon^2 \sigma_i^n + O(\varepsilon^3).
$$
This corresponds to the It\^o correction term for \eqref{eq:Langevin}. 

The first term in the last line of \eqref{eq:drift} is the most difficult one to approximate. Using the notation in \eqref{eq:H_approx}
$$
1 \wedge e^{-\beta \delta H} = 1 \wedge e^{-\beta \left( \varepsilon \frac{\partial H}{\partial \sigma_i^n} \cdot \nu_i^n + R_i^n + h_i^n\right)}
\approx 1 \wedge e^{-\beta \left( \varepsilon \frac{\partial H}{\partial \sigma_i^n} \cdot \nu_i^n +
R_i^n \right)} + O(\ep^2),
$$
since $h_i^n \approx O(\ep^2)$, to write it as
\[
\E{ \varepsilon \nu_i^n \left(1\wedge e^{-\beta \delta H} \right)  } =
\E{ \varepsilon \nu_i^n \left(1\wedge e^{-\beta \left( \varepsilon \frac{\partial H}{\partial \sigma_i^n} \cdot \nu_i^n + R_i^n \right) }\right) } + O(\varepsilon^3).
\]
For any orthonormal basis $\{b_1,b_2,b_3\}$ in $\mathbb{R}^3$, the normal random vector $w_i^n$ can be expressed as \[w_i^n = (w_i^n \cdot b_1) b_1 +  (w_i^n \cdot b_2) b_2 + (w_i^n \cdot b_3) b_3 \] and $(w_i^n \cdot b_1),(w_i^n \cdot b_2),(w_i^n \cdot b_3)$ are independent standard normal random variables. Denote $r_1 = (w_i^n \cdot b_1),r_2 = (w_i^n \cdot b_2),r_3 = (w_i^n \cdot b_3)$,$w_i^n =r_1 b_1 + r_2 b_2 + r_3 b_3$. Choose $b_1,b_2$ two orthonormal vectors on the tangent plane of $\sigma_i^n$ and $b_3 = \sigma_i^n$,
\[
 \nu_i^n = \textrm{P}_{\sigma_i^n} (w_i^n) = r_1 b_1 + r_2 b_2,
\]
where $r_1,r_2 \sim \N(0,1)$ are independent.
Then,
\begin{equation}\label{eq:drift1} 
\begin{aligned}
\E{ \varepsilon \nu_i^n \left(1\wedge e^{-\beta \left( \varepsilon \frac{\partial H}{\partial \sigma_i^n} \cdot \nu_i^n + R_i^n \right) }\right) }
&= \E{\varepsilon r_1 b_1  \left(1\wedge e^{-\beta \left( \varepsilon r_1 \frac{\partial H}{\partial \sigma_i^n} \cdot b_1
+ \varepsilon r_2 \frac{\partial H}{\partial \sigma_i^n} \cdot b_2 + R_i^n \right) }\right) } \\
& + \E{\varepsilon  r_2 b_2 \left(1\wedge e^{-\beta \left( \varepsilon r_1 \frac{\partial H}{\partial \sigma_i^n} \cdot b_1
+ \varepsilon r_2 \frac{\partial H}{\partial \sigma_i^n} \cdot b_2 + R_i^n \right) }\right) }.
\end{aligned}\end{equation}
The two terms on the right are similar in form so we only show the calculation for the first one and the second one follows similarly.

\begin{remark}
 For the XY model, the projection of the normal random vector onto the tangent plane of $\sigma_i^n$ is represented by the form $r_1 b_1$, where $r_1 \sim N(0,1)$. The other parts of the calculation basically stays the same.
\end{remark}

Using tower property of conditional expectation for the first term on the RHS of \eqref{eq:drift1}, we have
\begin{align*}
&\E{\varepsilon r_1 b_1  \left(1\wedge e^{-\beta \left( \varepsilon r_1 \frac{\partial H}{\partial \sigma_i^n} \cdot b_1
+ \varepsilon r_2 \frac{\partial H}{\partial \sigma_i^n} \cdot b_2 + R_i^n \right) }\right) } \\
& \hspace{1cm}  = \mathbb{E}_n \left\{\E{\varepsilon r_1 b_1 \left(1\wedge e^{-\beta \left( \varepsilon r_1 \frac{\partial H}{\partial \sigma_i^n}
\cdot b_1 + \varepsilon r_2 \frac{\partial H}{\partial \sigma_i^n} \cdot b_2 + R_i^n \right) }\right) \Big | r_2,R_i^n } \right\}.
\end{align*}

We recall the following Lemma 2.4 in \cite{mattingly2012diffusion}. (See also \cite{roberts1997weak}.)
\begin{lemma}
For $z \sim \mathcal{N}(0,1)$,
\begin{equation}
\E[]{z \left( 1 \wedge e^{az+b} \right) } = a e^{\frac{a^2}{2} + b} \Phi \left( - \frac{b}{|a|} - |a|  \right),
\end{equation}
for any real constants $a,b$, and $\Phi(\cdot)$ is the CDF for the standard normal random variable.
\end{lemma}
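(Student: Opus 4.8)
\emph{Proof plan.} The statement is a one-dimensional Gaussian integral, and the plan is to reduce it to elementary computations. First dispose of the degenerate case $a=0$: then $1\wedge e^{az+b}=1\wedge e^{b}$ is a constant, so the left-hand side is $(1\wedge e^b)\,\E[]{z}=0$, which matches the right-hand side (the prefactor $a$ annihilates the otherwise ill-defined argument of $\Phi$). So assume $a\neq 0$ from now on.

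The cleanest route is Gaussian integration by parts (Stein's identity). The map $g(z)=1\wedge e^{az+b}$ is bounded and Lipschitz, hence absolutely continuous, with $g'(z)=a e^{az+b}\I{az+b<0}$ for a.e.\ $z$, so $\E[]{z\,g(z)}=\E[]{g'(z)}$ holds (the boundary term in the integration by parts vanishes because $g$ is bounded and $\phi$ decays, where $\phi$ is the standard normal density). Hence
\[
\E[]{z(1\wedge e^{az+b})}=a\,\E[]{e^{az+b}\I{az+b<0}}=a\int_{\{az+b<0\}} e^{az+b}\,\phi(z)\,\df z .
\]
Completing the square via $az+b-\tfrac12 z^2=-\tfrac12(z-a)^2+\tfrac{a^2}{2}+b$ gives $e^{az+b}\phi(z)=e^{a^2/2+b}\phi(z-a)$, so the substitution $u=z-a$ turns the integral into $e^{a^2/2+b}\int_{\{au<-(a^2+b)\}}\phi(u)\,\df u$. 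Evaluating this probability is just a matter of dividing the linear constraint by $a$ and tracking whether the inequality flips: for $a>0$ the region is $u<-|a|-b/|a|$, giving $\Phi(-|a|-b/|a|)$; for $a<0$ the region is $u>|a|+b/|a|$, giving $1-\Phi(|a|+b/|a|)=\Phi(-|a|-b/|a|)$. In either case it equals $\Phi(-b/|a|-|a|)$, which is the claimed identity.

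If one prefers to avoid Stein's identity, the same answer falls out of a direct split: write $1\wedge e^{az+b}=e^{az+b}\I{az+b\le 0}+\I{az+b>0}$ and break $\E[]{z(\cdot)}$ into two half-line integrals. The indicator piece is $\int z\phi(z)\,\df z$ over a half-line, which equals $\pm\phi$ at the endpoint (since $\phi'=-z\phi$); the exponential piece is handled by the same complete-the-square-and-shift manoeuvre, producing an $a e^{a^2/2+b}\Phi(\cdot)$ term together with a second $\pm\phi$ term. The two stray $\phi$ contributions cancel exactly because a short exponent computation gives $e^{a^2/2+b}\phi(\text{endpoint}-a)=\phi(\text{endpoint})$, leaving precisely $a e^{a^2/2+b}\Phi(-b/|a|-|a|)$.

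There is no genuine obstacle here; every step is routine single-variable calculus. The one thing requiring care is the sign of $a$: the indicator $\I{az+b<0}$ points in opposite directions for $a>0$ and $a<0$, and dividing the resulting constraint by $a$ reverses the inequality in the negative case. This is exactly why $|a|$ rather than $a$ appears in the final formula, and once the bookkeeping is done consistently the two cases collapse into the single stated identity.
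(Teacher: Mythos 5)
Your proof is correct, and both of the routes you sketch lead cleanly to the stated identity. For comparison: the paper does not actually write out a proof of this lemma — it quotes it as Lemma 2.4 of Mattingly--Pillai--Stuart and remarks only that it is "the direct result of the integration for the expectation," i.e.\ precisely your second route, where one splits $1 \wedge e^{az+b} = e^{az+b}\I{az+b\le 0} + \I{az+b>0}$, completes the square in the exponential piece, and watches the two stray $\phi$ boundary terms cancel because $e^{a^2/2+b}\phi\!\left(-\tfrac{b}{a}-a\right)=\phi\!\left(\tfrac{b}{a}\right)$. Your primary argument via Gaussian integration by parts (Stein's identity, $\E[]{z\,g(z)}=\E[]{g'(z)}$ for the bounded Lipschitz function $g(z)=1\wedge e^{az+b}$) is a genuinely different and somewhat slicker reduction: it replaces the half-line splitting by a single derivative computation, so the cancelling $\phi$ terms never appear and only one Gaussian shift $u=z-a$ is needed. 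What the direct split buys in exchange is that it requires no appeal to Stein's identity and is the computation the cited reference has in mind. You also handle the two points that need care and that the paper leaves implicit — the degenerate case $a=0$ and the inequality reversal when dividing by a negative $a$, which is the source of the $|a|$ in the final formula — so the write-up is complete as it stands.
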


The proof of this Lemma is the direct result of the integration for the expectation. And the Lemma gives
\begin{equation}\begin{aligned}
& \E{\varepsilon r_1 b_1 \left(1\wedge e^{-\beta \left( \varepsilon r_1 \frac{\partial H}{\partial \sigma_i^n} \cdot b_1 + \varepsilon r_2 \frac{\partial H}{\partial \sigma_i^n} \cdot b_2 + R_i^n \right) }\right) \Big | r_2,R_i^n } =\\
& \hspace{0.3cm}  -\beta \varepsilon^2 \left( \frac{\partial H}{\partial \sigma_i^n} \cdot b_1 \right) b_1
e^{\frac{\left( \beta \varepsilon \frac{\partial H}{\partial \sigma_i^n} \cdot b_1 \right)^2}{2} -
\beta \varepsilon r_2 \frac{\partial H}{\partial \sigma_i^n} \cdot b_2 - \beta R_i^n }
\Phi \left( \frac{\varepsilon r_2 \frac{\partial H}{\partial \sigma_i^n} \cdot b_2 + R_i^n}
{\left| \varepsilon \frac{\partial H}{\partial \sigma_i^n} \cdot b_1 \right| }
- \left| \beta \varepsilon \frac{\partial H}{\partial \sigma_i^n} \cdot b_1 \right| \right).
\end{aligned}\end{equation}
Before taking the expectation over $r_2$, we further simplify this expression by noting that $e^{O(\ep)} = 1 + O(\ep)$ resulting in
\begin{equation}\label{eq:drift2} 
\begin{aligned}
& \E{\varepsilon r_1 b_1 \left(1\wedge e^{-\beta \left( \varepsilon r_1 \frac{\partial H}{\partial \sigma_i^n}
\cdot b_1 + \varepsilon r_2 \frac{\partial H}{\partial \sigma_i^n} \cdot b_2 + R_i^n \right) }\right) \Big | r_2,R_i^n }\\
& \hspace{1cm} \approx  \left( -\beta \varepsilon^2 \left( \frac{\partial H}{\partial \sigma_i^n} \cdot b_1 \right) b_1 + O(\ep^3) \right)
\left[ \Phi \left( \frac{\varepsilon r_2 \frac{\partial H}{\partial \sigma_i^n} \cdot b_2 + R_i^n}
{\left| \varepsilon \frac{\partial H}{\partial \sigma_i^n} \cdot b_1 \right| }
  \right) + O(\ep) \right].
\end{aligned}\end{equation}
For a mean zero Gaussian random variable $z$, we know
\[
\E[]{\Phi(z)} = \E[]{\Phi(z) - \frac 12 + \frac 12} = \int_{-\infty}^\infty \left( \Phi(z) - \frac 12 + \frac 12 \right) p(z) \df z = \frac 12,
\]
as $\Phi(z) - \frac 12$ is an odd function and the probability density function $p(z)$ is even.

Notice that $ R_i^n = \varepsilon \sum_{j \ne i} \frac{\partial H}{\partial \sigma_j^n} \cdot \nu_j^n$ is a sum of independent mean zero Gaussian random variables, so
$\varepsilon r_2 \frac{\partial H}{\partial \sigma_i^n} \cdot b_2 + R_i^n$ is a Gaussian random variable with mean $0$, therefore
\[
\E{ -\beta \varepsilon^2 \left( \frac{\partial H}{\partial \sigma_i^n} \cdot b_1 \right) b_1
\Phi \left(  \frac{\varepsilon r_2 \frac{\partial H}{\partial \sigma_i^n} \cdot b_2 + R_i^n}
{\left| \varepsilon \frac{\partial H}{\partial \sigma_i^n} \cdot b_1 \right| } \right) } =
-\frac 12 \beta \varepsilon^2 \left( \frac{\partial H}{\partial \sigma_i^n} \cdot b_1 \right) b_1.
\]

The second term on the RHS of \eqref{eq:drift1} follows similarly,
\[
\E{\varepsilon r_2 b_2  \left(1\wedge e^{-\beta \left( \varepsilon r_1 \frac{\partial H}{\partial \sigma_i^n} \cdot b_1
+ \varepsilon r_2 \frac{\partial H}{\partial \sigma_i^n} \cdot b_2 + R_i^n \right) }\right) } =
  -\frac 12 \beta \varepsilon^2 \left( \frac{\partial H}{\partial \sigma_i^n} \cdot b_2 \right) b_2 + O(\varepsilon^3).
\]

Combining the above
\begin{equation*}
  \E{\sigma^{n+1}_i-\sigma^n_i} \approx -\frac 12 \beta \varepsilon^2 \textrm{P}_{\sigma^n_i}^{\perp} \left( \frac{\partial H}{\partial \sigma^n_i} \right) - \varepsilon^2 \sigma^n_i,
\end{equation*}
where $ \frac{\partial H}{\partial \sigma^n_i} = \frac{J}{N^2} \Delta_N \sigma_i^n$ and $\Delta_N \sigma_i^n = N^2 (\sigma_{i+1}^n + \sigma_{i-1}^n -2\sigma_i^n)$ denotes the discrete Laplacian.

\subsection{Diffusion.}
Recall $\Gamma_i^n$ in \eqref{eq:gamma},
\[
\Gamma_i^n =  \begin{cases}
		\varepsilon \nu_i^n + c_i^n - \E{\sigma_i^{n+1} - \sigma_i^n } & \textrm{with probability } \alpha \\
		- \E{\sigma_i^{n+1} - \sigma_i^n } & \textrm{with probability } 1-\alpha
	      \end{cases}
\]
with accept rate $\alpha$ in \eqref{eq:accept}. Since $\E{\sigma_i^{n+1} - \sigma_i^n }$ is an order $\varepsilon^2$ term and $\alpha \approx 1$ with small $\varepsilon$, we are going to show
\[
\Gamma_i^n \approx \varepsilon \nu_i^n.
\]

\begin{proposition} \label{prop:phi}
The diffusion term
\begin{equation}
\Gamma_i^n = \sigma_i^{n+1} - \sigma_i^n - \E{\sigma_i^{n+1} - \sigma_i^n } = \varepsilon \nu_i^n + \phi_i^n,
\end{equation}
where
\begin{equation} \label{eq:phi}
\phi_i^n \equiv \Gamma_i^n - \varepsilon \nu_i^n = \sigma_i^{n+1} - \sigma_i^n - \E{\sigma_i^{n+1} - \sigma_i^n } - \varepsilon\nu_i^n
\end{equation}
is a random variable with mean $\E[]{\phi_i^n} = 0$, variance $\E[]{\| \phi_i^n \|^2} \le C \varepsilon^3$, and covariance $\E[]{\phi_i^n \cdot \phi_i^m} = 0$ for $n \ne m$.
\end{proposition}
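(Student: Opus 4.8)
The plan is to read off $\phi_i^n$ explicitly from the accept/reject rule, dispose of the mean-zero and orthogonality claims by a martingale-difference argument, and then obtain the variance bound by splitting $\phi_i^n$ into a ``rejection'' piece, a normalization-error piece, and the already-controlled drift. Since $\sigma_i^{n+1}=\tilde\sigma_i^n$ with probability $\alpha$ and $\sigma_i^{n+1}=\sigma_i^n$ otherwise, while $\tilde\sigma_i^n-\sigma_i^n=\ep\nu_i^n+c_i^n$ is exact by the definition of $c_i^n$ in Section~\ref{subsec:setup}, we have $\sigma_i^{n+1}-\sigma_i^n=\kappa_n(\ep\nu_i^n+c_i^n)$, and therefore
\[
\phi_i^n \;=\; \Gamma_i^n-\ep\nu_i^n \;=\; -(1-\kappa_n)\,\ep\,\nu_i^n \;+\; \kappa_n\,c_i^n \;-\; \E{\sigma_i^{n+1}-\sigma_i^n},
\]
where $\E{\cdot}=\mathbb E[\,\cdot\mid\mathcal F_{n\delta t}]$. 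By construction $\E{\Gamma_i^n}=0$, and $\E{\nu_i^n}=\textrm{P}_{\sigma_i^n}^\perp(\E{w_i^n})=0$ because $w_i^n$ is independent of $\mathcal F_{n\delta t}$ with mean zero while $\sigma_i^n$ is $\mathcal F_{n\delta t}$-measurable; hence $\E{\phi_i^n}=\E{\Gamma_i^n}-\ep\,\E{\nu_i^n}=0$. This gives the mean-zero claim, and for $m<n$, using that $\phi_i^m$ is $\mathcal F_{n\delta t}$-measurable, it gives $\mathbb E[\phi_i^n\cdot\phi_i^m]=\mathbb E[\phi_i^m\cdot\E{\phi_i^n}]=0$.

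For the variance, use $(1-\kappa_n)^2=1-\kappa_n$ and $\|x+y+z\|^2\le 3(\|x\|^2+\|y\|^2+\|z\|^2)$ to get
\[
\|\phi_i^n\|^2 \;\le\; 3\Big((1-\kappa_n)\,\ep^2\|\nu_i^n\|^2 \;+\; \|c_i^n\|^2 \;+\; \big\|\E{\sigma_i^{n+1}-\sigma_i^n}\big\|^2\Big).
\]
The middle term satisfies $\E{\|c_i^n\|^2}\le C\ep^4$ by the Lemma of Section~\ref{subsec:setup}, and the last term is $O(\ep^4)$ by Proposition~\ref{prop:theta} (the drift is $O(\ep^2)$ with $\E{\|\theta_i^n\|^2}\le C\ep^6$, and $\|\partial H/\partial\sigma_i^n\|\le 8J$ is constant for fixed $N$). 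The first term is the substantive one: conditioning further on $w^n=(w_1^n,\dots,w_N^n)$ replaces $1-\kappa_n$ by the rejection probability $1-\alpha=(1-e^{-\beta\,\delta H})^+\le\beta\,|\delta H|$, so $\E{(1-\kappa_n)\,\ep^2\|\nu_i^n\|^2}\le \beta\,\ep^2\,\E{|\delta H|\,\|\nu_i^n\|^2}$. Since $\delta H$ contains the term $\ep\,\tfrac{\partial H}{\partial\sigma_i^n}\cdot\nu_i^n$ it is correlated with $\nu_i^n$ and cannot be factored, so I apply Cauchy--Schwarz, $\E{|\delta H|\,\|\nu_i^n\|^2}\le \sqrt{\E{\delta H^2}}\,\sqrt{\E{\|\nu_i^n\|^4}}$. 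Here $\E{\|\nu_i^n\|^4}\le\E{\|w_i^n\|^4}$ is a finite constant, and $\E{\delta H^2}\le C\ep^2$: this is read off the exact expression \eqref{eq:dH}, bounding each $\tilde\sigma_j^n-\sigma_j^n$ by $\ep\|w_j^n\|+\|c_j^n\|$, invoking the $c_j^n$ bound of Section~\ref{subsec:setup}, and using that for fixed $N$ the sums over $j$ are finite and all Gaussian moments are bounded. Hence $\E{(1-\kappa_n)\,\ep^2\|\nu_i^n\|^2}\le C\beta\ep^3$, and assembling the three pieces yields $\E{\|\phi_i^n\|^2}\le C\ep^3$ with $C$ depending only on $N,\beta,J$.

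The main obstacle is precisely this estimate on the rejection piece: the indicator $1-\kappa_n$ is small only after conditioning, through $1-\alpha\le\beta|\delta H|$, and $\delta H$ is not independent of $\nu_i^n$, so the naive factorization of $\E{(1-\alpha)\|\nu_i^n\|^2}$ is unavailable; Cauchy--Schwarz combined with boundedness of Gaussian moments for fixed $N$ resolves this, at the cost of a non-sharp constant. The one subsidiary point requiring care is that $\delta H$ in \eqref{eq:dH} is the energy difference of the \emph{normalized} proposals, so the bound $\E{\delta H^2}\le C\ep^2$ should be obtained from \eqref{eq:dH} directly, absorbing the normalization error via the $c_j^n$ estimates rather than from the truncated form \eqref{eq:H_approx}. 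Everything else is bookkeeping with conditional Gaussian moments.
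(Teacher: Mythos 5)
Your proof is correct and follows essentially the same route as the paper's: mean and covariance via the tower property, and for the variance a split according to acceptance/rejection in which the rejection probability is bounded by $\beta|\delta H|$ and Cauchy--Schwarz (with $\E[]{\delta H^2}\le C\varepsilon^2$ and bounded Gaussian moments for fixed $N$) produces the dominant $C\varepsilon^3$ term, while the $c_i^n$ and drift contributions are $O(\varepsilon^4)$. Your explicit Bernoulli decomposition $\phi_i^n=-(1-\kappa_n)\varepsilon\nu_i^n+\kappa_n c_i^n-\E{\sigma_i^{n+1}-\sigma_i^n}$ is only a cosmetic rearrangement of the paper's conditioning on accept/reject, so no substantive difference remains.
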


\begin{proof}
For the mean
\[
\phi_i^n = \sigma_i^{n+1} - \sigma_i^n - \E{\sigma_i^{n+1} - \sigma_i^n } - \varepsilon \nu_i^n,
\]
then $\E[]{\phi_i^n} = \E[]{\sigma_i^{n+1} - \sigma_i^n - \E{\sigma_i^{n+1} - \sigma_i^n } - \varepsilon \nu_i^n } = 0$.

For the variance,
\begin{equation} \label{eq:phi^2}
\begin{aligned}
 \E[]{ \| \phi_i^n \|^2} 
=& \E[]{ \left\| \exp_{\sigma_i^n}(\varepsilon \nu_i^n) - \sigma_i^n - \E{\sigma_i^{n+1} - \sigma_i^n } -\varepsilon \nu_i^n \right\|^2 \left( 1 \wedge e^{-\beta \delta H}\right) }  \\
& \qquad + \E[]{ \left\|-\varepsilon \nu_i^n -\E{\sigma_i^{n+1} - \sigma_i^n} \right\|^2 \left(1- \left( 1 \wedge e^{- \beta \delta H}\right)\right) }  \\
=& \E[]{\left\| c_i^n - \E{\sigma_i^{n+1}- \sigma_i^n} \right\|^2 \left( 1 \wedge e^{-\beta \delta H}\right) } \\ &\qquad+ \E[]{ \left\|-\varepsilon \nu_i^n -\E{\sigma_i^{n+1} - \sigma_i^n}\right\|^2 \left(1- \left( 1 \wedge e^{-\beta \delta H}\right)\right) }.  
\end{aligned}
\end{equation}

The first term in the last line of \eqref{eq:phi^2}
\begin{align*}
  & \E[]{\left\| c_i^n - \E{\sigma_i^{n+1}- \sigma_i^n} \right\|^2 \left( 1 \wedge e^{- \beta \delta H}\right) } \\
\le & \E[]{ \left\| c_i^n - \E{\sigma_i^{n+1}- \sigma_i^n} \right\|^4 }^{\frac 12} \E[]{\left( 1 \wedge e^{-\beta \delta H}\right)^2}^{\frac 12} \\
 &  \hspace{0.6cm}  * C \left( \E[]{\| c_i^n \|^4} + \E[]{ \left \| \E{\sigma_i^{n+1}- \sigma_i^n} \right \|^4} \right)^{\frac 12} \\
\le &  C \varepsilon^4,
\end{align*}
(here, $*$ denotes multiplication) as $\E{\sigma_i^{n+1}- \sigma_i^n} = -\frac 12 \beta \varepsilon^2 \textrm{P}_{\sigma^n_i}^{\perp} \left( \frac{\partial H}{\partial \sigma^n_i} \right) - \varepsilon^2 \sigma^n_i + O(\varepsilon^3)$ and $\E[]{ \|c_i^n \|^4 } \le C \varepsilon^8$ shown in Appendix \ref{Appendix:dirft and diffusion}.

For the second term in the last line of \eqref{eq:phi^2}, since $ \left| 1 - \left( 1 \wedge e^{- \beta \delta H}\right) \right| = \left| e^0 - e^{0 \wedge (-\beta \delta H)} \right| \le | \beta \delta H|$, we observe that
\begin{align*}
\mathbb{E}&\left[ \left\|-\varepsilon \nu_i^n -\E{\sigma_i^{n+1} - \sigma_i^n}\right\|^2 \left(1- \left( 1 \wedge e^{-\beta \delta H}\right)\right) \right]\\
&\le  \E[]{\left\|-\varepsilon \nu_i^n -\E{\sigma_i^{n+1} - \sigma_i^n}\right\|^4 }^{\frac 12} \E[]{ | \beta \delta H|^2}^{\frac 12} \\
& \le  C \varepsilon^3,
\end{align*}
for some constant $C$.   We have used that $-\varepsilon \nu_i^n -\E{\sigma_i^{n+1} - \sigma_i^n} = -\varepsilon \nu_i^n + O(\varepsilon^2)$ and $\delta H = \varepsilon \sum_{j} \frac{\partial H}{\partial \sigma_j^n} \cdot \nu_j^n + O(\varepsilon^2)$ are both order $\varepsilon$ term.

Combining the above, the variance in \eqref{eq:phi^2} is bounded by $\E[]{ \| \phi_i^n \|^2} \le C\varepsilon^3$.

For the covariance of $\phi_i^n, \phi_i^m$ at different time steps $n>m$, and $\zeta=x,y,z$ denotes the coordinates of the vector,
\[
 \E[]{\phi_{i,\zeta}^n \phi_{i,\zeta}^m} 
= \E[]{\E{\phi_{i,\zeta}^n \phi_{i,\zeta}^m}} 
= \E[]{ \phi_{i,\zeta}^m \E{\phi_{i,\zeta}^n}} 
= \E[]{ \phi_{i,\zeta}^m 0 } 
= 0.
\]

\end{proof}

\begin{remark}
In fact, the error term is $\E[]{\| \phi_i^n \|^2} = O(\varepsilon^3)$ and this determines the order of the convergence in Theorem \ref{thm:1}.
The details of the calculation are given in Appendix \ref{Appendix:diffusion}.
\end{remark}

\subsection{Proof of Theorem \ref{thm:1}}
For the error estimation, we apply similar techniques as in the proof of stochastic Euler's method.

Take $\sigma_i,\bar{\sigma}_i$ as in Theorem \ref{thm:1}. For simplicity we denote 
\begin{equation}\label{eq:mu_psi}
\mu_i(\sigma) = \textrm{P}_{\sigma_i}^\perp (\Delta_N \sigma_i) - \frac N \beta \sigma_i, \qquad \psi_i(\sigma) = \sqrt{N}{\beta} (I - \sigma_i \sigma_i^T)
\end{equation}
as the drift and diffusion coefficients in \eqref{eq:SDE}, respectively, where $\sigma$ is the collection of all the $\sigma_i$. When $N,J,\beta$ are fixed and $\|\sigma_i\|=1$, the coefficient $\mu(\sigma)$ and $\psi(\sigma)$ are Lipschitz continuous in each coordinates of $\sigma$. From Theorem 5.2.1 in \cite{oksendal2003stochastic}, the SDE system has a unique solution.

Now we have the following estimate on the error.
\begin{proposition} \label{proposition:mh_sde1}
  Define the error $e(t)$ between M-H interpolation $\bar{\sigma}_i$ and SDE \eqref{eq:SDE} solution $\sigma_i$ as
  \begin{equation} \label{eq:e1}
    e(t) \equiv \sup_{1 \le i \le N, 0 \le s \le t} \E[]{ \left\|\sigma_{i}(s) - \bar{\sigma}_{i}(s)\right\|^2 }. 
  \end{equation}
  For any fixed $T>0$, $e(t)$ is bounded by
  \begin{equation}
   e(t) \le C(N,J,\beta,T) \sqrt{\delta t} \quad t \in [0,T].
  \end{equation}
\end{proposition}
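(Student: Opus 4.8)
The plan is to run the classical strong-error analysis for the Euler--Maruyama scheme, viewing one step of the M-H algorithm as a perturbed Euler step for \eqref{eq:SDE}. First I would combine Propositions~\ref{prop:theta} and~\ref{prop:phi} with the coupling $\varepsilon w_i^n = \sqrt{N/\beta}\,[W_i(t_{n+1})-W_i(t_n)]$ and the identity $\beta\varepsilon^2=N\delta t$ (so that the leading drift equals $\mu_i(\sigma^n)\delta t$ exactly, as computed around \eqref{eq:Euler step dt}, and $\varepsilon\nu_i^n=\psi_i(\sigma^n)\,\Delta W_i^n$ exactly, with $\Delta W_i^n\equiv W_i(t_{n+1})-W_i(t_n)$ and $\mu_i,\psi_i$ as in \eqref{eq:mu_psi}) to obtain the \emph{exact} one-step identity
\[
\sigma_i^{n+1}-\sigma_i^n = \mu_i(\sigma^n)\,\delta t + \psi_i(\sigma^n)\,\Delta W_i^n + \theta_i^n + \phi_i^n .
\]
Writing \eqref{eq:SDE} in integrated form over $[t_n,t_{n+1}]$ and subtracting, the error $E_i^n\equiv\sigma_i^n-\sigma_i(t_n)$, which vanishes at $n=0$ since the two processes share the initial datum, satisfies
\[
E_i^{m+1} = \sum_{n=0}^{m}\big(A_i^n + B_i^n + \theta_i^n + \phi_i^n\big),\quad A_i^n \equiv \int_{t_n}^{t_{n+1}}[\mu_i(\sigma^n)-\mu_i(\sigma(s))]\,\df s,\quad B_i^n \equiv \int_{t_n}^{t_{n+1}}[\psi_i(\sigma^n)-\psi_i(\sigma(s))]\,\df W_i(s).
\]

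Second, I would estimate the four pieces in $L^2$. For $A$ and $B$: using that $\mu_i$ (which involves only $\sigma_{i-1},\sigma_i,\sigma_{i+1}$) and $\psi_i$ (which involves only $\sigma_i$) are Lipschitz with constants depending only on $N,J,\beta$ on a neighbourhood of the unit sphere --- which is the relevant domain since $\|\sigma_i(s)\|=1$ for the SDE and the exponential-map (or normalised) M-H proposal keeps $\|\sigma_i^n\|$ bounded --- together with the standard time-continuity bound $\E[]{\|\sigma_j(s)-\sigma_j(t_n)\|^2}\le C(N,J,\beta)(s-t_n)$ for the SDE, Cauchy--Schwarz in time for $A$ and It\^o's isometry for $B$, one gets $\E[]{\|\sum_n A_i^n\|^2}\le C\int_0^{t_{m+1}}\sum_j\E[]{\|E_j^{\lfloor s/\delta t\rfloor}\|^2}\,\df s + C\delta t$ and the same for $\sum_n B_i^n$ (with $\sum_j$ replaced by the single $i$-th term). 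For the remainder terms I would use only their second moments: since $\theta_i^n$ is $\mathcal{F}_{t_n}$-measurable with $\E[]{\|\theta_i^n\|^2}\le C\varepsilon^6=C(N/\beta)^3\delta t^3$, Cauchy--Schwarz over the $\lfloor T/\delta t\rfloor$ steps gives $\E[]{\|\sum_n\theta_i^n\|^2}\le C(N,\beta,T)\,\delta t$; and since $\{\phi_i^n\}_n$ is a martingale-difference sequence with $\E[]{\|\phi_i^n\|^2}\le C\varepsilon^3=C(N/\beta)^{3/2}\delta t^{3/2}$, the cross terms vanish and $\E[]{\|\sum_n\phi_i^n\|^2}=\sum_n\E[]{\|\phi_i^n\|^2}\le C(N,\beta,T)\sqrt{\delta t}$ --- this last bound is precisely what sets the $\sqrt{\delta t}$ rate.

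Third, I would close with discrete Gr\"onwall. Setting $h_n\equiv\sup_{1\le i\le N}\E[]{\|E_i^n\|^2}$ and collecting the four estimates (absorbing the $O(\delta t)$ contributions into the $O(\sqrt{\delta t})$ one, since $\delta t\le1$), one obtains $h_{m+1}\le C_1(N,J,\beta,T)\,\delta t\sum_{k=0}^{m}h_k + C_2(N,J,\beta,T)\sqrt{\delta t}$, and the discrete Gr\"onwall lemma yields $h_m\le C_2\sqrt{\delta t}\,e^{C_1 T}$ for all $m\le\lfloor T/\delta t\rfloor$. Finally, since $\sigma_i(s)-\bar{\sigma}_i(s)=(\sigma_i(s)-\sigma_i(t_{n(s)}))-E_i^{n(s)}$ with $n(s)=\lfloor s/\delta t\rfloor$, the time-continuity estimate gives $\E[]{\|\sigma_i(s)-\bar{\sigma}_i(s)\|^2}\le 2C(N,J,\beta)\delta t+2h_{n(s)}$, so $e(t)\le C(N,J,\beta,T)\sqrt{\delta t}$ on $[0,T]$, as claimed.

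I expect the main obstacle to be the bookkeeping around the remainders $\theta_i^n,\phi_i^n$: these carry no pathwise smallness (as stressed in the introduction, their realizations can be large), so the argument must rely exclusively on their second moments, and it is crucial that $\phi_i^n$ is a martingale difference, so that summing the $\lfloor T/\delta t\rfloor$ contributions costs one factor of $\delta t^{-1}$ rather than two, turning the per-step $O(\varepsilon^3)$ into an overall $O(\sqrt{\delta t})$. A secondary point needing care is making the Lipschitz constants of $\mu_i,\psi_i$ available on whatever set the M-H iterates actually occupy (exactly the sphere if the exponential-map/normalised proposal is used verbatim, a bounded neighbourhood otherwise), since the discrete Laplacian $\Delta_N$ carries an $N^2$ prefactor and these constants are exactly what enter $C_1,C_2$.
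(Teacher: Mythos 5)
Your argument is correct and is essentially the paper's proof: both treat one M-H step as an exact perturbed Euler step via Propositions \ref{prop:theta} and \ref{prop:phi}, bound the drift/diffusion mismatches by Lipschitz continuity of $\mu_i,\psi_i$ plus It\^o isometry, handle the $\theta_i^n$ sum by Cauchy--Schwarz and the $\phi_i^n$ sum by martingale orthogonality (which sets the $\sqrt{\delta t}$ rate), and close with Gr\"onwall. The only difference is bookkeeping: you run a discrete Gr\"onwall on grid-point errors and then transfer to arbitrary times via an SDE time-continuity estimate, whereas the paper compares $\sigma_i(s)$ with the piecewise-constant interpolation $\bar{\sigma}_i(s)$ directly and applies a continuous Gr\"onwall to $e(t)$, absorbing the partial-interval integrals as $O(\delta t)$ terms.
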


\begin{proof}
  For the proof we are going to show $e(t)$ satisfies the Gr\"onwall inequality ($C_i$ denotes some constant bound):
  \begin{equation} \label{eq:e3}
    e(t) \le \left( C_1 T + C_2 \right) \int_0^t e(s) \df s + \left( C_3 \sqrt{\delta t} + C_4 \delta t + C_5 \delta_t^2 \right),
  \end{equation}
  so $e(t) \le \left( C_3 \sqrt{\delta t} + C_4 \delta t + C_5 \delta t^2 \right) \exp \left(C_1 T (T + C_2) \right)$.

  Since $\bar{\sigma}_i(t) = \sigma_i^{  \left \lfloor \frac{t}{\delta t}  \right \rfloor} = \sigma_i^0 + \sum_{j=0}^{\left \lfloor \frac{s}{\delta t} \right \rfloor-1} \left( \sigma_i^{j+1} - \sigma_i^j \right)$ and both $\sigma_i,\bar{\sigma}_i$ start from the same initial condition, from the definition of $e(t)$ and $\theta_i^n$ in \eqref{eq:theta}, $\Gamma_i^n$ in \eqref{eq:gamma}, $\phi_i^n$ in \eqref{eq:phi} we have that 
\begin{align}
    e(t) =& \sup_{1 \le i \le N, 0 \le s \le t} \E[]{ \left\|\sigma_{i}(s) - \bar{\sigma}_{i}(s)\right\|^2 } \nonumber \\
    =& \sup_{1 \le i \le N, 0 \le s \le t} \mathbb{E} \left[ \left\| \int_0^s \mu_i \big(\sigma(u) \big) \df u + \int_0^s \psi_i \big(\sigma(u) \big) \df W_i(u) \right. \right. \nonumber \\
    & \hspace{3cm}-\sum_{j=0}^{\left \lfloor \frac{s}{\delta t} \right \rfloor-1} \left(\E{\sigma_i^{j+1} - \sigma_i^j} + \Gamma_i^j \right)\bigg\|^2 \bigg] \nonumber \\
    =& \sup_{1 \le i \le N, 0 \le s \le t} \mathbb{E} \left[  \left\| \int_0^s \mu_i \big( \sigma(u) \big) \df u + \int_0^s \psi_i \big( \sigma(u) \big) \df W_i(u) \right. \right. \nonumber \\ 
    & \hspace{3cm} - \sum_{j=0}^{\left \lfloor \frac{s}{\delta t} \right \rfloor-1} \left(\mu_i(\sigma^j) \delta t + \varepsilon \nu_i^j + \theta_i^j + \phi_i^j \right)\bigg\|^2  \bigg] 
\end{align}
where $ \mu_i(\sigma^j) \delta t = \int_{j \delta t}^{ (j+1) \delta t}  \mu_i \big( \bar{\sigma}(u) \big) \df u$ and $\varepsilon \nu_i^j =  \int_{j \delta t}^{ (j+1) \delta t}  \psi_i \big( \bar{\sigma}(u) \big) \df W_i(u) $. Applying H\"older's inequality and $\E[]{|X+Y|^2} \le 2 \E[]{X^2 + Y^2}$ produces
\begin{align} \label{eq:e2}
    e(t)   \le & C \sup_{1 \le i \le N, 0 \le s \le t} \mathbb{E} \left[ \left\| \int_0^{\left \lfloor \frac{s}{\delta t} \right \rfloor \delta t} \left[\mu_i \big( \sigma(u) \big) - \mu_i \big( \bar{\sigma}(u) \big)\right] \df u \right\|^2 \right. \nonumber \\
    & \hspace{1cm}  \left. + \left\| \int_0^{\left \lfloor \frac{s}{\delta t} \right \rfloor \delta t} \Big( \psi_i \big(\sigma(u) \big) - \psi_i \big(\bar{\sigma}(u) \big) \Big) \df W_i(u) \right\|^2 + \left\| \int_{\left \lfloor \frac{s}{\delta t} \right \rfloor \delta t}^s \mu_i \big(\sigma(u) \big) \df u \right\|^2 \right. \nonumber  \\
    & \hspace{1cm} \left.  + \left\| \int_{\left \lfloor \frac{s}{\delta t} \right \rfloor \delta t}^s \psi_i \big( \sigma_i(u) \big) \df W_i(u) \right\|^2 + \left\| \sum_{j=0}^{\left \lfloor \frac{s}{\delta t} \right \rfloor-1} \theta_i^j  \right\|^2 + \left\| \sum_{j=0}^{\left \lfloor \frac{s}{\delta t} \right \rfloor-1} \phi_i^j  \right\|^2 \right].  
\end{align}

Using H\"older inequality for the first term in \eqref{eq:e2} with the coordinate $\zeta = x,y,z$ 
$$
  \left|\int_{0}^{ \left \lfloor \frac{s}{\delta t} \right \rfloor \delta t} \left[ \mu_{i,\zeta} \big( \sigma(u) \big) - \mu_{i,\zeta} \big( \bar{\sigma} (u) \big) \right]\df u \right|^2 
  \le \int_0^{\left \lfloor \frac{s}{\delta t} \right \rfloor \delta t}  \Big( \mu_{i,\zeta} \big(\sigma(u)\big) - \mu_{i,\zeta} \big(\bar{\sigma} (u) \big) \Big)^2 \df u \times \int_0^{\left \lfloor \frac{s}{\delta t} \right \rfloor \delta t} 1^2 \df u,
$$
  since $\mu_i$ is Lipschitz,
  \[
   \int_0^{\left \lfloor \frac{s}{\delta t} \right \rfloor \delta t}  \Big( \mu_{i,\zeta} \big(\sigma(u)\big) - \mu_{i,\zeta} \big(\bar{\sigma} (u) \big) \Big)^2 \df u \le C_1 \int_0^{\left \lfloor \frac{s}{\delta t} \right \rfloor \delta t} \big( \sigma_{i,\zeta} (u) - \bar{\sigma}_{i,\zeta}(u) \big)^2 \df u,
  \]
  combine $\zeta = x,y,z$ terms,
  \[
  \sup_{1 \le i \le N, 0 \le s \le t}  \E[]{ \left\| \int_0^{\left \lfloor \frac{s}{\delta t} \right \rfloor \delta t}\left[ \mu_i \big(\sigma(u) \big) - \mu_i \big(\bar{\sigma}(u) \big)\right] \df u \right\|^2 } \le C_1 t \int_0^t e(s) \df s.
  \]
  
  Applying It\^o isometry to the second term of \eqref{eq:e2} for the $x$ coordinate
  \begin{align*}
  & \E[]{ \left( \int_{0}^{\left \lfloor \frac{s}{\delta t} \right \rfloor \delta t}\left[ \psi_{i} \big(\sigma(u)\big) - \psi_{i} \big(\bar{\sigma} (u) \big)\right] \df W_{i} (u)\right)_x^2 } \\
  =&   \mathbb{E} \left[ \left( \int_0^{\left \lfloor \frac{s}{\delta t} \right \rfloor \delta t} \bar{\sigma}_i^x(u) ( \bar{\sigma}_i^x(u) \df W_i^x(u) + \bar{\sigma}_i^y(u) \df W_i^y(u) + \bar{\sigma}_i^z(u) \df W_i^z(u)) \right. \right. \\
  & \hspace{2cm}  - \sigma_i^x(u) (\sigma_i^x(u) \df W_i^x(u) + \sigma_i^y(u) \df W_i^y(u) + \sigma_i^z(u) \df W_i^z(u)) \bigg)^2 \Bigg] \\
    =&  \mathbb{E} \left[ \left( \int_0^{\left \lfloor \frac{s}{\delta t} \right \rfloor \delta t}\left[ (\bar{\sigma}_i^x(u))^2 - (\sigma_i^x(u))^2\right] \df W_i^x(u) \right)^2  \right. \\
    & \hspace{2cm} + \left( \int_0^{\left \lfloor \frac{s}{\delta t} \right \rfloor \delta t} \left[\bar{\sigma}_i^x(u) \bar{\sigma}_i^y(u) - \sigma_i^x(u) \sigma_i^y(u)\right] \df W_i^y(u) \right)^2 \\
    & \hspace{2cm} \left. + \left( \int_0^{\left \lfloor \frac{s}{\delta t} \right \rfloor \delta t}\left[ \bar{\sigma}_i^x(u) \bar{\sigma}_i^z(u) - \sigma_i^x(u) \sigma_i^z(u)\right] \df W_i^z(u) \right)^2 \right] \\
  =& \mathbb{E} \left[ \int_0^{\left \lfloor \frac{s}{\delta t} \right \rfloor \delta t} \left( (\bar{\sigma}_i^x(u))^2 - (\sigma_i^x(u))^2 \right)^2 \df u + \int_0^{\left \lfloor \frac{s}{\delta t} \right \rfloor \delta t} \left( \bar{\sigma}_i^x(u) \bar{\sigma}_i^y(u) - \sigma_i^x(u) \sigma_i^y(u) \right)^2 \df u \right. \\
  & \hspace{2cm} \left. + \int_0^{\left \lfloor \frac{s}{\delta t} \right \rfloor \delta t} \left( \bar{\sigma}_i^x(u) \bar{\sigma}_i^z(u) - \sigma_i^z(u) \sigma_i^z(u) \right)^2 \df u \right] . 
  \end{align*}
  Since 
  \begin{align*}
    & \left( (\bar{\sigma}_i^x)^2 - (\sigma_i^x)^2 \right)^2 \le (|\bar{\sigma}_i^x|+|\sigma_i^x|)^2 ( \bar{\sigma}_i^x - \sigma_i^x)^2 \le 4 (\bar{\sigma}_i^x - \sigma_i^x)^2 ,\\
    & \left( \bar{\sigma}_i^x \bar{\sigma}_i^y - \sigma_i^x \sigma_i^y \right)^2  = \big( (\bar{\sigma}_i^x - \sigma_i^x) \bar{\sigma}_i^y + \sigma_i^x ( \bar{\sigma}_i^y - \sigma_i^y)  \big)^2 \le 2 (\bar{\sigma}_i^x - \sigma_i^x)^2 (\bar{\sigma}_i^y)^2 + 2 (\sigma_i^x)^2 ( \bar{\sigma}_i^y - \sigma_i^y)^2, \\
    & \left( \bar{\sigma}_i^x \bar{\sigma}_i^z - \sigma_i^x \sigma_i^z \right)^2  = \big( (\bar{\sigma}_i^x - \sigma_i^x) \bar{\sigma}_i^z + \sigma_i^x ( \bar{\sigma}_i^z - \sigma_i^z) \big)^2 \le 2 (\bar{\sigma}_i^x - \sigma_i^x)^2 (\bar{\sigma}_i^z)^2 + 2 (\sigma_i^x)^2 ( \bar{\sigma}_i^z - \sigma_i^z)^2, 
  \end{align*}
  then
  \begin{dmath*}
   \E[]{ \left( \int_{0}^{\left \lfloor \frac{s}{\delta t} \right \rfloor \delta t} \left[ \psi_{i} \big(\sigma(u) \big) - \psi_{i} \big(\bar{\sigma}(u) \big) \right]\df W_{i} (u)\right)_x^2 } \le  C \E[]{ \int_0^{\left \lfloor \frac{s}{\delta t} \right \rfloor \delta t}\left[ (\bar{\sigma}_i^x(u) - \sigma_i^x(u))^2 + (\bar{\sigma}_i^y(u) - \sigma_i^y(u))^2 + (\bar{\sigma}_i^z(u) - \sigma_i^z(u))^2 \right] \df u}
  \end{dmath*}
  and $y,z$ coordinates of the second term in \eqref{eq:e2} are similar. Summing up $x,y,z$ coordinates, the second term in \eqref{eq:e2} is bounded by
  \[
  \sup_{1 \le i \le N, 0 \le s \le t}  \E[]{\left\| \int_0^{\left \lfloor \frac{s}{\delta t} \right \rfloor \delta t} \Big ( \psi_i \big(\sigma(u) \big) - \psi_i \big(\bar{\sigma}(u) \big) \Big ) \df W_i(u) \right\|^2} \le C_2 \int_0^t e(s) \df s.
  \]

  For the third term in \eqref{eq:e2}
  \[
  \left\| \int_{\left \lfloor \frac{s}{\delta t} \right \rfloor \delta t}^s \mu_i \big(\sigma(u) \big) \df u \right\|^2 \le C_3 \delta t^2,
  \]
  since $\| \sigma_i\|=1$ and $ s-\left \lfloor \frac{s}{\delta t} \right \rfloor \delta t \le \delta t$.

  Apply It\^o isometry again for the fourth term in \eqref{eq:e2},
  \[
  \E[]{\left\| \int_{\left \lfloor \frac{s}{\delta t} \right \rfloor \delta t}^s \psi_i \big(\sigma_i(u)\big) \df W_i(u) \right\|^2} \le C_4 \delta t.
  \]

  From Cauchy inequality and $\E[]{ \left\| \theta_{i}^j \right\|^2} \le C \varepsilon^6$ in Proposition \ref{prop:theta}, the fifth term in \eqref{eq:e2} is bounded by
  \[
  \E[]{ \left\| \sum_{j=0}^{\left \lfloor \frac{s}{\delta t} \right \rfloor-1} \theta_{i}^j \right\|^2 } \le \left \lfloor \frac{s}{\delta t} \right \rfloor \sum_{j=0}^{\left \lfloor \frac{s}{\delta t} \right \rfloor-1} \E[]{ \left\| \theta_{i}^j \right\|^2} \le C_5 \left(\left \lfloor \frac{t}{\delta t} \right \rfloor \right)^2 \varepsilon^6 = C_6 \delta t.
  \]
From Proposition \ref{prop:phi}, $\E[]{\phi_i^j \cdot \phi_i^k} \le C \delta_{jk} \varepsilon^3$ with $\delta_{jk}$ the Kronecker delta, the sixth term in \eqref{eq:e2}
  \[
  \E[]{\left\| \sum_{j=0}^{\left \lfloor \frac{s}{\delta t} \right \rfloor-1} \phi_{i}^j \right\|^2 }  = \sum_{j=0}^{\left \lfloor \frac{s}{\delta t} \right \rfloor-1} \E[]{ \left\| \phi_{i}^j \right\|^2} \le C_7 \left \lfloor \frac{t}{\delta t} \right \rfloor \varepsilon^3 \le C_8 \sqrt{\delta t}.
  \]
  Combining all above, we get the Gr\"onwall inequality \eqref{eq:e3}.
\end{proof}

\begin{remark}
  In the Gr\"onwall inequality, the $C_3 \sqrt{\delta t}$ term decides the order of convergence. It comes from $\E[]{\left\| \sum_{j=0}^{\left \lfloor \frac{s}{\delta t} \right \rfloor-1} \phi_{i}^j \right\|^2 }$, which we show is an $O(\varepsilon^3)$ term in \ref{Appendix:diffusion}.
\end{remark}

With Proposition \ref{proposition:mh_sde1}, we can get a uniform bound by using Doob's martingale inequality in \cite{oksendal2003stochastic} for a nonnegative submartingale $X_t$ and constant $p>1$:
\begin{equation} \label{eq:Doob}
  \E[]{\left| \sup_{0\le s\le t} X_s \right|^p }^{1/p} \le \frac{p}{p-1} \E[]{|X_t|^p}^{1/p}.
\end{equation}

\begin{proposition}
 Define the error between M-H and SDE dynamics for $i$-th spin as
 \begin{equation}
   e_i(t) = \E[]{\sup_{0\le s \le t} \| \sigma_i(s) - \bar{\sigma}_i(s) \|^2 } \quad t \in [0,T], 1 \le i \le N,
 \end{equation}
 for any $T \in \mathbb{R}^+$. There exists some constant $C$ as a function of $T,N,J,\beta$ and independent of $i,\delta t$,
 \begin{equation} \label{eq:e2_prop4}
   e_i(t) \le C \sqrt{\delta t}.
 \end{equation}
for any $t \in [0,T]$.
\end{proposition}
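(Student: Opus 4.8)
The plan is to re-run the error decomposition from the proof of Proposition~\ref{proposition:mh_sde1}, but this time to place the supremum over $s \in [0,t]$ \emph{inside} the expectation from the start, and to pay for that supremum by invoking Doob's maximal inequality~\eqref{eq:Doob} on the martingale pieces while reusing the bound $e(t) \le C\sqrt{\delta t}$ that Proposition~\ref{proposition:mh_sde1} already supplies. Concretely, I would write $\sigma_i(s) - \bar{\sigma}_i(s)$ as the same sum of six pieces appearing in~\eqref{eq:e2}: the Lipschitz drift difference $\int_0^{\lfloor s/\delta t \rfloor \delta t} (\mu_i(\sigma) - \mu_i(\bar{\sigma}))\,\df u$; the Lipschitz diffusion difference $\int_0^{\lfloor s/\delta t \rfloor \delta t} (\psi_i(\sigma) - \psi_i(\bar{\sigma}))\,\df W_i$; the two within-step remainders $\int_{\lfloor s/\delta t \rfloor \delta t}^s \mu_i(\sigma)\,\df u$ and $\int_{\lfloor s/\delta t \rfloor \delta t}^s \psi_i(\sigma)\,\df W_i$; and the two correction sums $\sum_j \theta_i^j$ and $\sum_j \phi_i^j$. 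Taking $\sup_{0 \le s \le t}$, applying $\|x_1 + \cdots + x_6\|^2 \le 6\sum_k \|x_k\|^2$, and estimating $\E[]{\sup_{s \le t}\|\cdot\|^2}$ for each piece separately reduces the problem to six estimates. The Lipschitz constants for $\mu_i,\psi_i$ and the sup-norm bounds $\|\mu_i\|,\|\psi_i\| \le C(N,J,\beta)$ (valid because $\|\sigma_i\| \equiv 1$ along the SDE) are all finite, so constants may depend on $N,J,\beta,T$.

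Five of the six pieces cost essentially nothing beyond Proposition~\ref{proposition:mh_sde1}. For the drift difference, Cauchy--Schwarz in time gives $\sup_{s \le t} \| \int_0^{\lfloor s/\delta t\rfloor \delta t}(\mu_i(\sigma) - \mu_i(\bar{\sigma}))\,\df u \|^2 \le t \int_0^t \| \mu_i(\sigma(u)) - \mu_i(\bar{\sigma}(u)) \|^2\,\df u$, whose expectation is at most $C t \int_0^t e(u)\,\df u \le C T^2 \sqrt{\delta t}$ by the Lipschitz bound and Proposition~\ref{proposition:mh_sde1}. For the diffusion difference, $M_i(r) \equiv \int_0^r (\psi_i(\sigma) - \psi_i(\bar{\sigma}))\,\df W_i$ is a martingale, so $\E[]{\sup_{s \le t}\| M_i(\lfloor s/\delta t\rfloor \delta t) \|^2} \le \E[]{\sup_{r \le t}\|M_i(r)\|^2} \le 4\,\E[]{\|M_i(t)\|^2}$ by~\eqref{eq:Doob} with $p=2$, and It\^o isometry plus the Lipschitz bound reduce this to $C\int_0^t e(u)\,\df u \le C\sqrt{\delta t}$. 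The partial sums $S_i^n \equiv \sum_{j < n}\phi_i^j$ form a discrete-time martingale, since each $\phi_i^j$ is centered conditionally on the configuration $\sigma^j$ (the drift subtraction centers $\sigma_i^{j+1}-\sigma_i^j$, and $\varepsilon \nu_i^j = \varepsilon\,\textrm{P}_{\sigma_i^j}^\perp(w_i^j)$ has conditional mean zero because $w_i^j$ is a fresh mean-zero Gaussian); so~\eqref{eq:Doob} again gives $\E[]{\sup_{s \le t}\|S_i^{\lfloor s/\delta t\rfloor}\|^2} \le 4 \sum_{j=0}^{\lfloor t/\delta t\rfloor - 1}\E[]{\|\phi_i^j\|^2}$, which by Proposition~\ref{prop:phi} and $\varepsilon^2 = N\delta t/\beta$ is at most $C(t/\delta t)\varepsilon^3 = C\,t\,(N/\beta)^{3/2}\sqrt{\delta t}$. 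The sum $\sum_j \theta_i^j$ is not centered, but its partial sums are monotone in norm, so $\sup_{s \le t}\|\sum_{j < \lfloor s/\delta t\rfloor}\theta_i^j\|^2 \le \lfloor t/\delta t\rfloor \sum_{j < \lfloor t/\delta t\rfloor}\|\theta_i^j\|^2$, with expectation at most $C(t/\delta t)^2 \varepsilon^6 = C\,t^2 (N/\beta)^3 \delta t$ by Proposition~\ref{prop:theta}. Finally the within-step drift remainder is bounded pointwise by $\delta t^2 \sup_u \|\mu_i(\sigma(u))\|^2 \le C \delta t^2$.

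The genuinely delicate piece, which I expect to be the main obstacle, is the within-step diffusion remainder $\int_{\lfloor s/\delta t\rfloor \delta t}^s \psi_i(\sigma)\,\df W_i$: its lower limit moves with $s$, so the supremum over $s$ cannot be absorbed into a single martingale. Writing $\sup_{0 \le s \le t}\| \int_{\lfloor s/\delta t\rfloor \delta t}^s \psi_i(\sigma)\,\df W_i \|^2 = \max_{0 \le n \le \lfloor t/\delta t\rfloor} \xi_n$ with $\xi_n \equiv \sup_{n\delta t \le s \le (n+1)\delta t}\| \int_{n\delta t}^s \psi_i(\sigma)\,\df W_i \|^2$, the naive estimate $\E[]{\max_n \xi_n} \le \sum_n \E[]{\xi_n}$ only yields $\E[]{\xi_n} \le C\delta t$ per interval, hence a useless $O(t)$ total. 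The remedy is to pass to a higher moment: Jensen gives $\E[]{\max_n \xi_n} \le (\sum_n \E[]{\xi_n^2})^{1/2}$, while the $L^4$ Doob/Burkholder--Davis--Gundy estimate and the boundedness of $\psi_i$ give $\E[]{\xi_n^2} \le C\delta t^2$, so $\E[]{\max_n \xi_n} \le (C(\lfloor t/\delta t\rfloor + 1)\delta t^2)^{1/2} \le C\sqrt{t\,\delta t} \le C\sqrt{\delta t}$ on $[0,T]$. Collecting the six estimates and absorbing $\delta t,\delta t^2$ into $\sqrt{\delta t}$ for small $\delta t$ gives $e_i(t) \le C(N,J,\beta,T)\sqrt{\delta t}$ for all $t \in [0,T]$. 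Alternatively, one can retain the two Lipschitz pieces as $C(1+T)\int_0^t E(u)\,\df u$ with $E(u) \equiv \sup_i e_i(u)$ — finite since $E \le 4$ as both processes stay on the unit sphere — and close via Gr\"onwall instead of citing Proposition~\ref{proposition:mh_sde1}, reaching the same conclusion.
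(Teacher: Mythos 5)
Your proposal is correct and follows the same overall architecture as the paper's proof: decompose $\sigma_i(s)-\bar{\sigma}_i(s)$ into drift/diffusion Lipschitz differences, within-step remainders, and the correction sums $\sum_j\theta_i^j$, $\sum_j\phi_i^j$; recycle the pointwise-in-time bound of Proposition \ref{proposition:mh_sde1} for the Lipschitz pieces; use Doob's inequality \eqref{eq:Doob} for the martingale pieces; and use Cauchy--Schwarz plus Propositions \ref{prop:theta} and \ref{prop:phi} for the correction sums. The one genuine difference is your handling of the within-step stochastic integral $\int_{\lfloor s/\delta t\rfloor\delta t}^{s}\psi_i(\sigma)\,\df W_i$. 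The paper groups it with the Lipschitz diffusion difference in the third term of \eqref{eq:e_prop4} and asserts the combination is a martingale in $s$, to which Doob is applied; strictly speaking that combined process is not a martingale, because the lower limit $\lfloor s/\delta t\rfloor\delta t$ (equivalently, the piece $\int_0^{\lfloor s/\delta t\rfloor\delta t}\psi_i(\bar\sigma)\,\df W_i$) moves with $s$ through the floor function. You isolate exactly this term, observe the obstruction, and pay for the supremum with a higher-moment argument: $\E[]{\max_n\xi_n}\le(\sum_n\E[]{\xi_n^2})^{1/2}$ together with an $L^4$ Doob/BDG bound $\E[]{\xi_n^2}\le C\,\delta t^2$ per subinterval, yielding $C\sqrt{T\,\delta t}$. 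This is a more careful route through the one loose step in the paper's argument and reaches the same $O(\sqrt{\delta t})$ conclusion; your alternative closing via Gr\"onwall with $E(u)=\sup_i e_i(u)$ (finite since both processes are bounded) is also sound, at the cost of redoing rather than citing Proposition \ref{proposition:mh_sde1}.
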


\begin{proof}
  Similar to \eqref{eq:e2} in Proposition \ref{proposition:mh_sde1}, we have that
  \begin{align} \label{eq:e_prop4}
   \mathbb{E}\bigg[ \sup_{0\le s \le t} &\left\| \sigma_i(s) - \bar{\sigma}_i(s) \right\|^2\bigg]
    \le  C  \mathbb{E} \left[ \sup_{0 \le s \le t} \left \| \int_0^{\left \lfloor \frac{s}{\delta t} \right \rfloor \delta t } \left[ \mu_i(\sigma(u)) - \mu_i(\bar{\sigma}(u)) \right]\df u \right \|^2\right. \nonumber\\
    & + \sup_{0 \le s \le t} \left \| \int_{\left \lfloor \frac{s}{\delta t} \right \rfloor \delta t}^s  \mu_i(\bar{\sigma}(u)) \df u \right\|^2  \nonumber\\
    & + \sup_{0 \le s \le t} \left \| \int_0^{\left \lfloor \frac{s}{\delta t} \right \rfloor \delta t } \psi_i(\sigma(u)) - \psi_i(\bar{\sigma}(u)) \df W_i(u) +
    \int_{\left \lfloor \frac{s}{\delta t} \right \rfloor \delta t}^s  \psi_i(\sigma(u)) \df W_i(u) \right \|^2  \nonumber\\
    & \left. +  \sup_{0 \le s \le t} \left( \sum_{j=0}^{\left \lfloor \frac{s}{\delta t} \right \rfloor-1} \theta_i^j \right)^2 +
    \sup_{0 \le s \le t} \left( \sum_{j=0}^{\left \lfloor \frac{s}{\delta t} \right \rfloor-1} \phi_i^j \right)^2 \right],
  \end{align}
%
  and each term on the RHS will be bounded by $C \sqrt{\delta t}$ for some constant $C$.
 Applying H\"older's inequality for the first term in \eqref{eq:e_prop4}:
  \begin{align*}
    & \E[]{\sup_{0 \le s \le t} \left \| \int_0^{\left \lfloor \frac{s}{\delta t} \right \rfloor \delta t } \left[ \mu_i(\sigma(u)) - \mu_i(\bar{\sigma}(u)) \right] \df u \right \|^2 } \\
    & \hspace{0.5cm} \le  \E[]{ \sup_{0 \le s \le t} t \int_0^{\left \lfloor \frac{s}{\delta t} \right \rfloor \delta t}  \| \mu_i(\sigma(u)) - \mu_i(\bar{\sigma}(u))\|^2 \df u  } \\
    & \hspace{0.5cm} \le  \E[]{ t \int_0^{t}  \| \mu_i(\sigma(u)) - \mu_i(\bar{\sigma}(u))\|^2 \df u  } 
    \le  C t^2 \sum_{j=i-1}^{i+1} \sup_{0 \le s \le t} \E[]{ \| \sigma_i(s) - \bar{\sigma}_i(s) \|^2 } \\ 
    & \hspace{0.5cm} \le C t^2 \sqrt{\delta t}.
  \end{align*}
  The last inequality above is from $\sup_{0 \le s \le t, 1\le i \le N} \E[]{ \| \sigma_i(s) - \bar{\sigma}_i(s) \|^2 } \le C \sqrt{\delta t}$ in Proposition \ref{proposition:mh_sde1}.

  For the second term in \eqref{eq:e_prop4}, the length of the interval of integration is smaller than $\delta t$ and $\| \mu_i(\bar{\sigma}(u))\| \le C$ as $\| \bar{\sigma}_i \| =1$, so
  \begin{equation}
    \sup_{0 \le s \le t} \left \| \int_{\left \lfloor \frac{s}{\delta t} \right \rfloor \delta t}^s  \mu_i(\bar{\sigma}(u)) \df u \right\|^2 \le C \delta t^2.
  \end{equation}
  The integral in the third term of \eqref{eq:e_prop4} is a martingale. For $\zeta = x,y,z$, denote 
  \[ 
  M_{t,\zeta} \equiv \left( \int_0^{\left \lfloor \frac{s}{\delta t} \right \rfloor \delta t } \left[  \psi_i(\sigma(u)) - \psi_i(\bar{\sigma}(u)) \right]\df W_i(u) 
  + \int_{\left \lfloor \frac{s}{\delta t} \right \rfloor \delta t}^s  \psi_i(\sigma_i(u)) \df W_i(u) \right)_\zeta. 
  \]
  This is a Martingale and $X_t = \left| M_{t,\zeta} \right|$ is a nonnegative submartingale, hence by Doob's inequality \eqref{eq:Doob} 
  \[ \E[]{ \| \sup_{0 \le s \le t} X_s\|^2} \le 4 \E[]{\|X_t\|^2}. \]
  For a nonnegative submartingale $X_t$
  \[ \E[]{ \sup_{0 \le s \le t} X_s^2} = \E[]{ \left( \sup_{0 \le s \le t} X_s \right)^2} \le C \E[]{ X_t^2} = C \E[]{M_{t,\zeta}^2}. \]
  Applying It\^o isometry for the last term similar to the proof of Proposition \ref{proposition:mh_sde1} and summing for all coordinates $\zeta=x,y,z$: 
  \begin{align*}
    & \E[]{\sup_{0 \le s \le t} \left \| \int_0^{\left \lfloor \frac{s}{\delta t} \right \rfloor \delta t }\left[ \psi_i(\sigma(u)) - \psi_i(\bar{\sigma}(u)) \right]\df W_i(u)
     + \int_{\left \lfloor \frac{s}{\delta t} \right \rfloor \delta t}^s   \psi_i(\sigma_i(u)) \df W_i(u) \right \|^2} \\
    & \hspace{0.5cm} \le  C \E[]{ \left \| \int_0^{\left \lfloor \frac{t}{\delta t} \right \rfloor \delta t }\left[ \psi_i(\sigma(u)) - \psi_i(\bar{\sigma}(u))\right] \df W_i(u)
     + \int_{\left \lfloor \frac{t}{\delta t} \right \rfloor \delta t}^s  \psi_i(\sigma_i(u)) \df W_i(u) \right \|^2} \\
    & \hspace{0.5cm} \le  C_1 \E[]{ \int_0^{\left \lfloor \frac{t}{\delta t} \right \rfloor \delta t } \left\| \sigma_i(u) - \bar{\sigma}_i(u) \right\|^2 \df u } +
    C_2 \E[]{ \int_{\left \lfloor \frac{t}{\delta t} \right \rfloor \delta t}^s  \| \sigma_i(u) \|^2 \df u} \\
    & \hspace{0.5cm} \le C_1 \sqrt{\delta t} + C_2 \delta t.
  \end{align*}

  From Cauchy inequality, the fourth term in \eqref{eq:e_prop4} 
  \begin{align*}
    \sup_{0 \le s \le t} \left( \sum_{j=0}^{\left \lfloor \frac{s}{\delta t}-1 \right \rfloor} \theta_i^j \right)^2 \le \sup_{0 \le s \le t} \left \lfloor \frac{s}{\delta t} \right \rfloor \sum_{j=0}^{\left \lfloor \frac{s}{\delta t} \right \rfloor-1} \| \theta_i^j \|^2 \le \left \lfloor \frac{t}{\delta t} \right \rfloor \sum_{j=0}^{\left \lfloor \frac{t}{\delta t} \right \rfloor-1} \| \theta_i^j \|^2,
  \end{align*}
  from Proposition \ref{prop:theta} $\E[]{\| \theta_i^j \|^2} \le C \varepsilon^6$ so the last expectation is bounded by $C \delta t$.

  In the fifth term of \eqref{eq:e_prop4}, $\sum_{j=0}^{\left \lfloor \frac{s}{\delta t} \right \rfloor -1} \phi_i^j$ is a discrete martingale. Again using martingale inequality for each coordinate and then summing up,
  \begin{equation}
    \E[] {\sup_{0 \le s \le t} \left\| \sum_{j=1}^{\left \lfloor \frac{s}{\delta t} \right \rfloor} \phi_i^j \right\|^2 } \le \E[]{\left\| \sum_{j=1}^{\left \lfloor \frac{t}{\delta t} \right \rfloor} \phi_i^j \right\|^2 },
  \end{equation}
  from \ref{prop:phi} $\E[]{\phi_i^j \cdot \phi_i^k} \le \delta_{jk} C \varepsilon^3$, it is bounded by
  \begin{equation}
    \E[]{\sum_{j=1}^{\left \lfloor \frac{t}{\delta t} \right \rfloor} \|\phi_i^j\|^2} \le C \left \lfloor \frac{t}{\delta t} \right \rfloor \varepsilon^3 \le C \sqrt{\delta t}.
  \end{equation}
  
  Combining the above, \eqref{eq:e2_prop4} is obtained.
\end{proof}

%
\section{From SDE system to deterministic PDE\label{sec:PDE}}
%

In this section, we explain the convergence from the SDE system \eqref{eq:SDE} to the deterministic harmonic map heat flow equation without the dispersion term \eqref{eq:LLG} with proper choice of $\beta = N^\gamma$ and number of particles $N \to \infty$.

\begin{remark}
 For the XY model from $\mathbb{T}^1 \to \mathbb{S}^1$, the convergence from the Langevin equation \eqref{eq:Langevin} to the harmonic map heat flow equation \eqref{eq:harmonic} can be shown similarly by taking $\textrm{P}_{x}^\perp(y) = y - (x,y)x$ and the rest of the proof stays the same.
\end{remark}

From \cite{guo1993landau}, for sufficiently regular initial data, there exists a global smooth solution to the Landau-Lifshitz equation
\begin{equation} \label{eq:LL}
 \partial_t u = - \alpha_1 u \times (u \times \Delta u) + \alpha_2 u \times \Delta u
\end{equation}
with periodic boundary conditions, and $\alpha_1, \alpha_2$ some constants. From \cite{weinan2001numerical}, it is known that the finite difference approximation converges to the Landau-Lifshitz equation \eqref{eq:LL}. The harmonic map heat flow equation \eqref{eq:harmonic} only has the dissipation term of the Landau-Lifshitz equation \eqref{eq:LL} and the results  from \cite{weinan2001numerical} should still hold. We will assume a global smooth solution exists for the harmonic map heat flow equation \eqref{eq:harmonic} in all contexts below. The finite difference approximation \eqref{eq:fde} will converge to the harmonic map heat flow equation \eqref{eq:harmonic} as $N \to \infty$ and we only need to compare the SDE system \eqref{eq:SDE} and the ODE system \eqref{eq:fde}.

In the following the error between \eqref{eq:SDE} and \eqref{eq:fde} is calculated. Since $\beta = N^\gamma, \gamma >1$ and $N \to \infty$, we denote 
\begin{equation} \label{eq:epsilon} 
\epsilon \equiv \sqrt{\frac N \beta}
\end{equation}
as a small parameter going to zero with $N \to \infty$. The SDE is then written as
\begin{equation} \label{eq:SDE_ep}
 \df \sigma_i = \textrm{P}_{\sigma_i}^\perp \left( \Delta_N \sigma_i \right) \df t - \epsilon^2 \sigma_i \df t - \epsilon \textrm{P}_{\sigma_i}^\perp \left( \df W_i \right).
\end{equation}

\begin{lemma} \label{lemma:4.1}
  Define the error between the SDE \eqref{eq:SDE_ep} and the ODE \eqref{eq:fde} for the i-th spin as $\tilde{e}_i \equiv \sigma_i - \tilde{\sigma}_i$ and define $e_i \equiv \epsilon^{-p} \tilde{e}_i$ for $0 <p<1$. Define
  \begin{equation} \label{eq:elm41}
   e = \frac 1N \sum_i \|e_i\|^2,
  \end{equation}
  then the following inequality holds  for each realization of the noise
  \begin{equation} \label{eq:lemma4.1}
    e(t) \le \int_0^t \left( C_1 e^{3/2} + C_2 e \right) \df s + C_3 \epsilon^{2-2p} t + 2 \int_0^t \epsilon^{1-p} \frac 1N \sum_i \left( \textrm{P}_{\sigma_i}^{\perp} \left( dW_i(s),e_i(s) \right) \right),
  \end{equation}
  with $\epsilon$ small enough so that $\epsilon^p N^{5/2} \le 1$, or equivalently $\gamma \ge 1 + \frac5p$.
\end{lemma}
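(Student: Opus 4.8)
The plan is to apply It\^o's formula pathwise to the functional $e(t)=\frac1N\sum_{i=1}^N\|e_i(t)\|^2$, with $e_i=\epsilon^{-p}(\sigma_i-\tilde\sigma_i)$, where $\sigma_i$ solves the SDE \eqref{eq:SDE_ep} and $\tilde\sigma_i$ solves the ODE \eqref{eq:fde} from the same deterministic initial data, so that $e(0)=0$. Subtracting the two equations gives
\[
\df e_i=\epsilon^{-p}\Big[\textrm{P}_{\sigma_i}^\perp(\Delta_N\sigma_i)-\textrm{P}_{\tilde\sigma_i}^\perp(\Delta_N\tilde\sigma_i)\Big]\df t-\epsilon^{2-p}\sigma_i\,\df t-\epsilon^{1-p}\textrm{P}_{\sigma_i}^\perp(\df W_i).
\]
Since the $W_i$ are independent three-dimensional Brownian motions and $\textrm{P}_{\sigma_i}^\perp$ is a rank-$2$ orthogonal projection, It\^o's formula yields $\df\|e_i\|^2=2(e_i,\df e_i)+2\epsilon^{2-2p}\,\df t$. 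Averaging over $i$, using the elementary identity $(\sigma_i,e_i)=\tfrac12\epsilon^p\|e_i\|^2$ (which follows from $\|\sigma_i\|=\|\tilde\sigma_i\|=1$), and integrating in time, the $-\epsilon^{2-p}\sigma_i$ drift contributes $-\epsilon^2\int_0^t e(s)\,\df s\le 0$, the quadratic-variation term contributes the deterministic $C_3\epsilon^{2-2p}t$, and the Brownian term contributes precisely the stochastic integral in \eqref{eq:lemma4.1} (its sign being immaterial, as $-W_i$ is again a Brownian motion). After discarding the nonpositive pieces, everything reduces to showing that the ``nonlinear drift''
\[
T_1:=\frac{2\epsilon^{-p}}{N}\sum_i\Big(e_i,\ \textrm{P}_{\sigma_i}^\perp(\Delta_N\sigma_i)-\textrm{P}_{\tilde\sigma_i}^\perp(\Delta_N\tilde\sigma_i)\Big)
\]
is bounded by $C_1 e^{3/2}+C_2 e$ plus a nonpositive remainder.

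To estimate $T_1$ I would split $\textrm{P}_{\sigma_i}^\perp(\Delta_N\sigma_i)-\textrm{P}_{\tilde\sigma_i}^\perp(\Delta_N\tilde\sigma_i)=\textrm{P}_{\sigma_i}^\perp(\Delta_N\sigma_i-\Delta_N\tilde\sigma_i)+\big(\textrm{P}_{\sigma_i}^\perp-\textrm{P}_{\tilde\sigma_i}^\perp\big)(\Delta_N\tilde\sigma_i)$, and use $\Delta_N\sigma_i-\Delta_N\tilde\sigma_i=\epsilon^p\Delta_N e_i$ together with $\textrm{P}_{\sigma_i}^\perp(\epsilon^p\Delta_N e_i)=\epsilon^p\big(\Delta_N e_i-(\sigma_i,\Delta_N e_i)\sigma_i\big)$. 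Pairing the bare discrete Laplacian against $e_i$ and summing by parts (using periodicity) produces the good dissipative term $-2N\sum_i\|e_{i+1}-e_i\|^2\le 0$; its only by-product is $-\tfrac{\epsilon^p}{N}\sum_i(\sigma_i,\Delta_N e_i)\|e_i\|^2$, using $(e_i,\sigma_i)=\tfrac12\epsilon^p\|e_i\|^2$. The term $\big(\textrm{P}_{\sigma_i}^\perp-\textrm{P}_{\tilde\sigma_i}^\perp\big)(\Delta_N\tilde\sigma_i)$ is harmless: $\|\textrm{P}_{\sigma_i}^\perp-\textrm{P}_{\tilde\sigma_i}^\perp\|\le 2\|\sigma_i-\tilde\sigma_i\|=2\epsilon^p\|e_i\|$, and $\|\Delta_N\tilde\sigma_i\|\le C$ uniformly in $N$ and $i$ --- this is where the global smoothness of the harmonic map heat flow guaranteed by \cite{guo1993landau} under \eqref{eq:smoothness}, together with the finite-difference estimates of \cite{weinan2001numerical}, enter --- so this piece contributes $\le Ce$.

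The genuinely delicate term is $-\tfrac{\epsilon^p}{N}\sum_i(\sigma_i,\Delta_N e_i)\|e_i\|^2$: because the stochastic trajectories are not smooth, $\Delta_N\sigma_i$ (hence $(\sigma_i,\Delta_N e_i)$) cannot be controlled by an $O(1)$ quantity, so one must exploit the error field itself. The plan is to bound $|(\sigma_i,\Delta_N e_i)|\le\|\Delta_N e_i\|\le N^2\big(\|e_{i+1}-e_i\|+\|e_i-e_{i-1}\|\big)$, apply Cauchy--Schwarz in $i$ to pull out a factor $\epsilon^p N^{3/2}\big(\sum_i\|e_{i+1}-e_i\|^2\big)^{1/2}\max_j\|e_j\|^2$, and then invoke the discrete Agmon-type inequality $\max_j\|e_j\|^2\le e+2\big(Ne\sum_i\|e_{i+1}-e_i\|^2\big)^{1/2}$ (proved by telescoping $\|e_j\|^2-\|e_k\|^2$ over the minimizing index $k$). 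Combining these with the crude bound $\sum_i\|e_{i+1}-e_i\|^2\le 4Ne$, the standing hypothesis $\epsilon^p N^{5/2}\le 1$ to collapse the surplus powers of $N$, and a Young's inequality that absorbs the residual discrete-gradient factor into the dissipative term $-2N\sum_i\|e_{i+1}-e_i\|^2$, produces a bound of the form $C_1 e^{3/2}+C_2 e$ with $C_1,C_2$ independent of $N$. (An equivalent route organizes the projection as $\textrm{P}_{\sigma_i}^\perp(\Delta_N\sigma_i)=\Delta_N\sigma_i-(\sigma_i,\Delta_N\sigma_i)\sigma_i$, uses $(\sigma_i,\Delta_N\sigma_i)=-\tfrac{N^2}{2}\big(\|\sigma_i-\sigma_{i+1}\|^2+\|\sigma_i-\sigma_{i-1}\|^2\big)\le 0$, and splits $\|\sigma_i-\sigma_{i+1}\|^2\le 2\|\tilde\sigma_i-\tilde\sigma_{i+1}\|^2+2\epsilon^{2p}\|e_i-e_{i+1}\|^2$ with the first summand $O(N^{-2})$ by smoothness.)

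I expect the main obstacle to be precisely this last estimate --- the bookkeeping of which powers of $N$ are killed by $\epsilon^p N^{5/2}\le 1$ and which by the dissipative summation-by-parts term, carried out \emph{without} any a priori bound on $e(t)$, since the pathwise smallness of $e$ is recovered only afterward through the rescaling and stopping-time argument that uses this lemma to prove Theorem \ref{thm:2}. A secondary technical point is the uniform bound $\|\Delta_N\tilde\sigma_i\|\le C$ for the finite-difference flow, which needs a discrete $C^2$-type estimate inherited from the smooth PDE rather than mere pointwise convergence.
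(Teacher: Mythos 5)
Your overall architecture matches the paper's: It\^o's formula applied to $\|e_i\|^2$, summation by parts over the periodic lattice to extract the dissipative term, the smoothness of the limiting flow to control the terms involving $\tilde\sigma_i$, the hypothesis $\epsilon^pN^{5/2}\le 1$ to tame the cubic term, and the $\ell^3\le\ell^2$ (power-mean) inequality to produce $e^{3/2}$. The paper organizes the algebra differently — it first rewrites $\textrm{P}_{\sigma_i}^\perp(\Delta_N\sigma_i)=\Delta_N\sigma_i+\tfrac12(\|\nabla_N^+\sigma_i\|^2+\|\nabla_N^-\sigma_i\|^2)\sigma_i$ and subtracts, which is precisely the ``equivalent route'' you describe in your parenthetical; this only requires the first-difference bound $\|\nabla_N^\pm\tilde\sigma_i\|\le C$, whereas your primary decomposition through $(\textrm{P}_{\sigma_i}^\perp-\textrm{P}_{\tilde\sigma_i}^\perp)(\Delta_N\tilde\sigma_i)$ needs the stronger second-difference bound $\|\Delta_N\tilde\sigma_i\|\le C$, as you note. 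Your treatment of the It\^o correction and of the $-\epsilon^{2-p}\sigma_i$ drift (via the exact identity $(\sigma_i,e_i)=\tfrac12\epsilon^p\|e_i\|^2$) is cleaner than the paper's.

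The one genuine gap is in your handling of the cubic term via the discrete Agmon inequality. Writing $D=\sum_i\|e_{i+1}-e_i\|^2$, your chain of estimates bounds $\bigl|\tfrac{\epsilon^p}{N}\sum_i(\sigma_i,\Delta_Ne_i)\|e_i\|^2\bigr|$ by $2\epsilon^pN^{3/2}D^{1/2}e+4\epsilon^pN^2e^{1/2}D$. The first piece closes ($\le Ce^{3/2}$ using $D\le 4Ne$ and $\epsilon^pN^2\le N^{-1/2}$), but the second does not: absorbing $4\epsilon^pN^2e^{1/2}D$ into the available dissipation $-2ND$ requires $e\le \tfrac14(\epsilon^pN)^{-2}$ restricted to $e\lesssim N^3$, which is not available pathwise (the only a priori bound is $e\le 4\epsilon^{-2p}\gtrsim N^{5}$), and using $D\le 4Ne$ instead leaves a stray factor $\epsilon^pN^3\le N^{1/2}$. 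Since the lemma must hold for every realization with $N$-independent constants, this substep fails as stated. The repair is to drop Agmon entirely and estimate crudely, as the paper does: $\|\Delta_Ne_i\|\le N(\|\nabla_N^+e_i\|+\|\nabla_N^-e_i\|)$ and $\|\nabla_N^\pm e_i\|\le N(\|e_{i\pm1}\|+\|e_i\|)$ give
\[
\Bigl|\tfrac{\epsilon^p}{N}\textstyle\sum_i(\sigma_i,\Delta_Ne_i)\|e_i\|^2\Bigr|\le C\,\epsilon^pN\sum_i\|e_i\|^3\le C\,N^{-3/2}\sum_i\|e_i\|^3\le C\Bigl(\tfrac1N\textstyle\sum_i\|e_i\|^2\Bigr)^{3/2},
\]
with no recourse to the dissipation at all. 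With that substitution (which is what your parenthetical alternative amounts to), the proof closes.
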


\begin{proof}
  Taking the projection given by
  \[
    \textrm{P}_{\sigma_i}^\perp (\Delta \sigma_i) = - \sigma_i \times ( \sigma_i \times \Delta \sigma_i) = \Delta \sigma_i - (\Delta \sigma_i,\sigma_i)\sigma_i
  \]
  together with $\|\sigma_i\|=1$, we have that
  \[
    (\Delta_N \sigma_i, \sigma_i) = - \frac 12 ( \| \nabla_N^+ \sigma_i \|^2 + \| \nabla_N^- \sigma_i \|^2),
  \]
  where $\nabla_N^+ \sigma_i = N(\sigma_{i+1} -\sigma_{i}),\nabla_N^- \sigma_i = N(\sigma_{i} -\sigma_{i-1}) $. The SDE system \eqref{eq:SDE_ep} can then be written as
  \[
  \df \sigma_i = \left( \Delta_N \sigma_i + \frac 12 ( \| \nabla_N^+ \sigma_i \|^2 + \| \nabla_N^- \sigma_i \|^2) \sigma_i \right) \df t - \epsilon^2 \sigma_i \df t - \epsilon \textrm{P}_{\sigma_i}^\perp \left( \df W_i(t) \right)
  \]
  and the ODE system \eqref{eq:fde} can similarly be written as
  \[
  \df \tilde{\sigma}_i = \left( \Delta_N \tilde{\sigma}_i + \frac 12 ( \| \nabla_N^+ \tilde{\sigma}_i \|^2 + \| \nabla_N^- \tilde{\sigma}_i \|^2) \tilde{\sigma}_i \right) \df t.
  \]
  By definition $\tilde{e}_i = \sigma_i - \tilde{\sigma}_i$ satisfies the following equation
  \begin{align*}
    \df \tilde{e}_i  =&  \Delta_N \tilde{e}_i \df t + \frac 12 \left[ 2 ( \nabla_N^+ \tilde{\sigma}_i, \nabla_N^+ \tilde{e}_i) \sigma_i + \| \nabla_N^+ \tilde{e}_i \|^2 \sigma_i + 2 ( \nabla_N^- \tilde{\sigma}_i, \nabla_N^- \tilde{e}_i) \sigma_i + \| \nabla_N^- \tilde{e}_i \|^2 \sigma_i \right]  \df t\\
    & + \frac 12 \left( \| \nabla_N^+  \tilde{\sigma}_i \|^2 + \| \nabla_N^- \tilde{\sigma}_i \|^2 \right) \tilde{e}_i \df t- \epsilon^2 \sigma_i \df t+ \epsilon \textrm{P}_{\sigma_i}^{\perp} \left( dW_i(t) \right).
  \end{align*}
  Since $\tilde{e}_i = \epsilon^p e_i$ with $0<p<1$,
  \begin{dmath} \label{eq:de_i}
    \df e_i  =  \Delta_N e_i \df t + \frac 12 \left[ 2 ( \nabla_N^+ \tilde{\sigma}_i, \nabla_N^+ e_i) \sigma_i + \epsilon^p \| \nabla_N^+ e_i \|^2 \sigma_i + 2 ( \nabla_N^- \tilde{\sigma}_i, \nabla_N^- e_i) \sigma_i \\
    + \epsilon^p \| \nabla_N^- e_i \|^2 \sigma_i \right]  \df t \\
     + \frac 12 \left( \| \nabla_N^+  \tilde{\sigma}_i \|^2 + \| \nabla_N^- \tilde{\sigma}_i \|^2 \right) e_i \df t- \epsilon^{2-p} \sigma_i \df t+ \epsilon^{1-p} \textrm{P}_{\sigma_i}^{\perp} \left( dW_i(t) \right).
  \end{dmath}
Applying It\^o's formula to $\frac 12 \df \| e_i \|^2$, we have that 
  \begin{dmath*}
    \frac 12 \df \| e_i \|^2 
    =  (e_i , \df e_i) + \mathcal{I}_i 
    =  (\Delta_N e_i,e_i) \df t+ \frac 12 \left[ 2 ( \nabla_N^+ \tilde{\sigma}_i, \nabla_N^+ e_i) + \epsilon^p \| \nabla_N^+ e_i \|^2  +  2 ( \nabla_N^- \tilde{\sigma}_i, \nabla_N^- e_i) + \epsilon^p \| \nabla_N^- e_i \|^2 \right] (\sigma_i,e_i)  \df t \\
    + \frac 12 \left( \| \nabla_N^+  \tilde{\sigma}_i \|^2 + \| \nabla_N^- \tilde{\sigma}_i \|^2 \right) \|e_i\|^2 \df t- \epsilon^{2-p} (\sigma_i,e_i) \df t + \epsilon^{1-p} \left( \textrm{P}_{\sigma_i}^{\perp} \left( \df W_i(t),e_i \right) \right) + \mathcal{I}_i,   
  \end{dmath*}
  where $\mathcal{I}_i$ represents the It\^o correction term of order $O(\epsilon^{2-2p})$ as shown in the following computation. 
  
  To calculate the It\^o correction $\mathcal{I}_i$ we consider the SDE systems for both $e_i$ in \eqref{eq:de_i} and $\sigma_i$ in \eqref{eq:SDE}. The It\^o correction $\mathcal{I}_i$ for $\df \|e_i\|^2$ combines three parts corresponding to $\frac{\partial^2 e_i^2}{\partial e_i^2},\frac{\partial^2 e_i^2}{\partial \sigma_i \partial e_i}$ and $\frac{\partial^2 e_i^2}{\partial \sigma_i^2}$. Since $\| \sigma_i\|=1$ we take $\left\| \textrm{P}_{\sigma_i}^{\perp} \left( \df W_i(t) \right) \right\|^2$ as bounded by $C \df t$. The first term, $\frac{\partial^2 e_i^2}{\partial e_i^2}$, is a constant and $\left\| \epsilon^{1-p} \textrm{P}_{\sigma_i}^{\perp} \left( \df W_i(t) \right) \right\|^2 \le C \epsilon^{2-2p} \df t$. The second term, $\frac{\partial^2 e_i^2}{\partial \sigma_i \partial e_i} = \frac{\partial^2 e_i \epsilon^{-p} (\sigma_i - \tilde{\sigma}_i)}{\partial \sigma_i \partial e_i}$, is order $\epsilon^{-p}$ but $\epsilon^{1-p} \textrm{P}_{\sigma_i}^{\perp} \left( \df W_i(t) \right) \cdot \epsilon\textrm{P}_{\sigma_i}^{\perp} \left( \df W_i(t) \right)$ is order $\epsilon^{2-p} \df t$ so the It\^o correction for the second term is also $O(\epsilon^{2-2p} \df t)$. For the third term $\frac{\partial^2 e_i^2}{\partial \sigma_i^2} = \epsilon^{-2p} \frac{\partial^2 (\sigma_i - \tilde{\sigma}_i)^2}{\partial \sigma_i^2}$ but $\left\| \epsilon \textrm{P}_{\sigma_i}^{\perp} \left( \df W_i(t) \right) \right\|^2 \le C \epsilon^{2} \df t$ so the It\^o correction for the third term is also $O(\epsilon^{2-2p}\df t)$.
  
 Returning to $ \frac 12 \df \| e_i \|^2$,   from the periodic boundary conditions, we know that 
  \[
    \sum_{i=1}^N (\Delta_N e_i, e_i) = - \sum_i \| \nabla_N^+ e_i \|^2 = - \sum_i \| \nabla_N^- e_i \|^2,
  \]
  hence summing up $\df \|e_i\|^2$ we have that  
  \begin{align} \label{eq:lm41_2} 
    & \df \left( \sum_i \|e_i\|^2 \right) \nonumber \\
    =&  - \sum_i \left( \| \nabla_N^+ e_i \|^2 + \| \nabla_N^- e_i \|^2 \right) \df t \nonumber \\
    &  +  \sum_i \left[ 2 ( \nabla_N^+ \tilde{\sigma}_i, \nabla_N^+ e_i) + \epsilon^p \| \nabla_N^+ e_i \|^2  +  2 ( \nabla_N^- \tilde{\sigma}_i, \nabla_N^- e_i) + \epsilon^p \| \nabla_N^- e_i \|^2 \right] (\sigma_i,e_i)  \df t \nonumber \\
    &  + \sum_i \left( \| \nabla_N^+  \tilde{\sigma}_i \|^2 + \| \nabla_N^- \tilde{\sigma}_i \|^2 \right) \|e_i\|^2 \df t- 2N \epsilon^{2-p} (\sigma_i,e_i) \df t +2 \sum_i \epsilon^{1-p} \left( \textrm{P}_{\sigma_i}^{\perp} \left( \df W_i,e_i \right) \right) \nonumber \\
    & + 2\sum_i \mathcal{I}_i \nonumber \\
    \le &  - \sum_i \left( \| \nabla_N^+ e_i \|^2 + \| \nabla_N^- e_i \|^2 \right) \df t \nonumber \\
    & +  \sum_i \left[ 2 ( \nabla_N^+ \tilde{\sigma}_i, \nabla_N^+ e_i) + \epsilon^p \| \nabla_N^+ e_i \|^2  +  2 ( \nabla_N^- \tilde{\sigma}_i, \nabla_N^- e_i) + \epsilon^p \| \nabla_N^- e_i \|^2 \right] (\sigma_i,e_i)  \df t \nonumber \\
    &  + \sum_i \left( \| \nabla_N^+  \tilde{\sigma}_i \|^2 + \| \nabla_N^- \tilde{\sigma}_i \|^2 \right) \|e_i\|^2 \df t- 2N \epsilon^{2-p} (\sigma_i,e_i) \df t \nonumber \\
    & +2 \sum_i \epsilon^{1-p} \left( \textrm{P}_{\sigma_i}^{\perp} \left( \df W_i,e_i \right) \right) + c N \epsilon^{2-2p} \df t .
  \end{align}
   For the second term of \eqref{eq:lm41_2}, from the Cauchy-Schwarz inequality, we observe that
   \begin{align*}
    \left| ( \nabla_N^+ \tilde{\sigma}_i, \nabla_N^+ e_i) (\sigma_i,e_i) \right| & \le \frac 12 \left(  \| \nabla_N^+ e_i \|^2 + \|\nabla_N^+ \tilde{\sigma}_i \|^2 \|e_i\|^2 \right) ,  \\
    \left| ( \nabla_N^- \tilde{\sigma}_i, \nabla_N^- e_i) (\sigma_i,e_i) \right| & \le \frac 12 \left(  \| \nabla_N^- e_i \|^2 + \|\nabla_N^- \tilde{\sigma}_i \|^2 \|e_i\|^2 \right).
   \end{align*}
   Since the solution for the harmonic map heat flow equation is smooth, in the third term of \eqref{eq:lm41_2},  $\| \nabla_N^+ \tilde{\sigma}_i \|^2 + \| \nabla_N^- \tilde{\sigma}_i\|^2$ can be bounded by some constant $C$. For the fourth term in \eqref{eq:lm41_2} $ | \epsilon^{2-p} (\sigma_i,e_i) | = | \epsilon^{2-2p} (\sigma_i, \epsilon^p e_i) | \le 2 \epsilon^{2-2p}$ since $\epsilon^p e_i = \sigma_i -\tilde{\sigma}_i$. Hence from \eqref{eq:lm41_2},
   $$\begin{aligned}
    \sum_i \|e_i(t)\|^2 
   & \le \int_0^t  \epsilon^p \sum_i \left( \|\nabla_N^+ e_i(s) \|^2 + \|\nabla_N^- e_i(s) \|^2 \right) \|e_i(s)\|  \df s  \\
   & + C_1 \int_0^t  \sum_i \|e_i(s)\|^2 \df s + C_2 N \epsilon^{2-2p}t 
    + 2 \int_0^t \epsilon^{1-p}  \sum_i \left( \textrm{P}_{\sigma_i}^{\perp} \left( dW_i(s),e_i(s) \right) \right).
   \end{aligned}$$
  Using the assumption that $\epsilon^p N^{5/2} \le 1$, (or equivalently $\gamma \ge 1 + 5/p$,) we bound the $ \sum_i \left( \epsilon^p \| \nabla_N^+ e_i \|^2 + \epsilon^p \| \nabla_N^- e_i \|^2 \right) \|e_i\|$ term by $ C \sum_i \frac{1}{\sqrt{N}} \|e_i\|^3$. Hence,
  \begin{equation}
   \frac 1N \sum_i \left( \epsilon^p \| \nabla_N^+ e_i \|^2 + \epsilon^p \| \nabla_N^- e_i \|^2 \right) \|e_i\| \le C \frac{1}{\sqrt{N^3}} \sum_i \|e_i\|^3 \le  C \left( \frac 1N \sum_i \|e_i\|^2 \right)^{\frac 32}
  \end{equation}
  as the p-norm is decreasing.  Finally, since $e = \frac 1N \sum_i \|e_i\|^2 $, we arrive at \eqref{eq:lemma4.1}.
  
  \end{proof}

\begin{remark}
  If we choose the parameters such that the small It\^o correction term, $C N \epsilon^{2-2p} t$,  from the martingale  is of lower order in $N$ and $\epsilon^p N^3 = O(1)$, (or equivalently $\gamma > 1 + \frac3p$,) we could show a similar result for $e= \sum_i \| e_i \|^2$ instead of $e=\frac 1N \sum_i \| e_i \|^2$. We could then bound $\E{\sum_i \|e_i\|^2 }$.
\end{remark}

Intuitively the inequality from Lemma \ref{lemma:4.1} is of Gr\"onwall type and the martingale part,
\[ 2 \int_0^t \epsilon^{1-p} \frac 1N \sum_i \left( \textrm{P}_{\sigma_i}^{\perp} \big( dW_i(s),e_i(s) \big) \right) \] has a small coefficient $\epsilon^{1-p}$. This implies that with large probability
\[
e(t) \le \int_0^t \left( C_1 e^{3/2}(s) + C_2 e(s) \right) \df s + C_3 \epsilon^{2-2p} t +\textrm{small term from martingale}
\]
and $e(t)$ is bounded for a long time interval. We use this idea to show the following proposition.
\begin{proposition} \label{prop:4.1}
  \begin{equation}
   \mathbb{P} \left( e(t) \le \epsilon^{2-2p} \exp(Ct) \right) \ge 1-e^{-\frac N2}, \quad t \in [0,T]
  \end{equation}
  for some constant $C$ and $T$ satisfying $\epsilon^{1-p} e^{\frac 12 CT} \le 1$.
\end{proposition}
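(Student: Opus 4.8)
The plan is to run a stopping-time bootstrap on the pathwise Gr\"onwall-type inequality \eqref{eq:lemma4.1} of Lemma~\ref{lemma:4.1}. Fix a constant $C$ (to be chosen large relative to the constants $C_1,C_2,C_3$ of Lemma~\ref{lemma:4.1} and the smoothness bound), work under the hypotheses of that lemma, and set
\[
\tau \equiv \inf\{t\ge 0:\ e(t)>\epsilon^{2-2p}e^{Ct}\}.
\]
Since $e(0)=0$ (the SDE \eqref{eq:SDE_ep} and the ODE \eqref{eq:fde} start from the same data), $\tau>0$, and it suffices to prove $\mathbb{P}(\tau>T)\ge 1-e^{-N/2}$, which is exactly the assertion. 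The first reduction is to linearise \eqref{eq:lemma4.1} on $[0,\tau\wedge T]$: there $e(s)\le\epsilon^{2-2p}e^{Cs}$, so the hypothesis $\epsilon^{1-p}e^{\frac12 CT}\le 1$ gives $\sqrt{e(s)}\le\epsilon^{1-p}e^{\frac12 Cs}\le 1$, hence $C_1 e^{3/2}(s)\le C_1 e(s)$. Writing $M(t)\equiv 2\int_0^t\epsilon^{1-p}\frac1N\sum_i\big(\textrm{P}_{\sigma_i}^\perp(\df W_i(s),e_i(s))\big)$ for the martingale term, \eqref{eq:lemma4.1} becomes, pathwise on $[0,\tau\wedge T]$,
\[
e(t)\ \le\ (C_1+C_2)\int_0^t e(s)\,\df s\ +\ C_3\,\epsilon^{2-2p}t\ +\ M(t).
\]

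The second step controls $M$ uniformly. Since $\textrm{P}_{\sigma_i}^\perp$ is a symmetric projection and the $W_i$ are independent, $M$ is a continuous martingale with $\df\langle M\rangle\le \frac{4\epsilon^{2-2p}}{N}\,e(t)\,\df t$, so on $[0,\tau\wedge T]$ one has the \emph{deterministic} bound $\langle M\rangle_{t\wedge\tau}\le \frac{4\epsilon^{2-2p}}{N}\int_0^{t\wedge\tau}\epsilon^{2-2p}e^{Cs}\,\df s=:\beta_\star$, with $\beta_\star = O\big(\epsilon^{4-4p}e^{CT}/N\big)$. Applying the exponential martingale inequality (equivalently, Doob's inequality to $\exp\{\theta M_{t\wedge\tau}-\tfrac12\theta^2\langle M\rangle_{t\wedge\tau}\}$, or a Dubins--Schwarz time change plus Doob's $L^2$ inequality) gives, for any $\lambda>0$,
\[
\mathbb{P}\Big(\sup_{0\le t\le T}M(t\wedge\tau)\ge\lambda\Big)\ \le\ \exp\!\Big(-\frac{\lambda^2}{2\beta_\star}\Big);
\]
the choice $\lambda=\sqrt{N\beta_\star}=O\big(\epsilon^{2-2p}e^{\frac12 CT}\big)$ makes the right-hand side at most $e^{-N/2}$. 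Let $G$ be the complementary event, so $\mathbb{P}(G)\ge 1-e^{-N/2}$.

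On $G$, the linear inequality above together with Gr\"onwall yields $e(t)\le\big(C_3\epsilon^{2-2p}T+\lambda\big)e^{(C_1+C_2)t}$ for $t\le\tau\wedge T$. Because $\lambda = O(\epsilon^{2-2p}e^{\frac12 CT})$ and $\epsilon^{1-p}e^{\frac12 CT}\le 1$, the prefactor $C_3\epsilon^{2-2p}T+\lambda$ can be made $\le\epsilon^{2-2p}$ once $C$ is taken large enough; then, for $C\ge C_1+C_2$, one gets $e(t)\le\epsilon^{2-2p}e^{(C_1+C_2)t}<\epsilon^{2-2p}e^{Ct}$ on $[0,\tau\wedge T]$. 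By right-continuity of $t\mapsto e(t)$ this is incompatible with $\tau\le T$, so $\tau>T$ on $G$, which is the claimed bound.

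The delicate point is the martingale step: the tolerance $\lambda$ must be simultaneously small enough that, after the $e^{(C_1+C_2)t}$ Gr\"onwall amplification, it still fits under the target curve $\epsilon^{2-2p}e^{Ct}$, and large enough that Bernstein's inequality still delivers probability at most $e^{-N/2}$. Matching these two requirements is precisely what dictates the balance $\epsilon^{1-p}e^{\frac12 CT}\le 1$ among $C$, $T$ and $\epsilon$, and it is the place where the (non-optimal) scalings are lost. The remaining ingredients — discarding the $C_1 e^{3/2}$ term, the deterministic Gr\"onwall estimate, and the continuity/bootstrap closure — are routine once the stopping time and the quadratic-variation bound are in place.
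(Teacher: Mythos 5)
Your architecture (stop at the target curve, linearize $e^{3/2}\le e$, Bernstein-type control of the martingale, Gr\"onwall, bootstrap) is coherent, but the closing step has a genuine quantitative gap, and it is exactly the step you flag as delicate. Because you bound the quadratic variation by its worst case over the whole interval, $\langle M\rangle_{\cdot\wedge\tau}\le\beta_\star$ with $\beta_\star=\tfrac{4\epsilon^{4-4p}(e^{CT}-1)}{NC}$, the tolerance forced by the $e^{-N/2}$ target is $\lambda=\sqrt{N\beta_\star}=2\epsilon^{2-2p}\sqrt{(e^{CT}-1)/C}$. To contradict $\tau\le T$ you must beat the curve $\epsilon^{2-2p}e^{Ct}$ at $t=\tau$, and since $\tau$ may be arbitrarily small this forces the prefactor, hence $\lambda$, to be below $\epsilon^{2-2p}$, i.e.\ $e^{CT}<1+C/4$. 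That is far more restrictive than the stated hypothesis $\epsilon^{1-p}e^{\frac12 CT}\le 1$, which allows $e^{\frac12 CT}$ as large as $\epsilon^{-(1-p)}$; and your remedy ``take $C$ large'' goes the wrong way: for fixed $T>0$ the quantity $e^{CT/2}/\sqrt{C}$ grows with $C$, so enlarging $C$ makes $\lambda$ larger, not smaller (a similar problem afflicts making $C_3\epsilon^{2-2p}T$ fit under $\epsilon^{2-2p}$ when $T\sim\ln(1/\epsilon)/C$). So, as written, your argument proves the proposition only for $T$ below a small absolute constant, not on the full range permitted by $\epsilon^{1-p}e^{\frac12 CT}\le 1$, which is what the application to Theorem \ref{thm:2} requires; the hypothesis you claim your matching ``dictates'' is in fact neither produced by nor sufficient for your estimates.

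The paper's proof avoids this loss and uses no stopping time at this stage: it applies the exponential martingale inequality \eqref{eq:Mineq} in self-normalized form with $a=\tfrac{N}{2\epsilon^{2-2p}}$, $b=\epsilon^{2-2p}$, keeping the compensator $\tfrac a2\langle M\rangle_t$ inside the bound. Since $\tfrac a2\langle M\rangle_t\le\int_0^t\frac1N\sum_i\|e_i(s)\|^2\,\df s=\int_0^t e(s)\,\df s$, the martingale is dominated, with probability at least $1-e^{-N/2}$, by $\epsilon^{2-2p}+\int_0^t e(s)\,\df s$, which merely changes $C_2$ into $C_2+1$ in \eqref{eq:lemma4.1}; the conclusion then follows by checking that $u(t)=\epsilon^{2-2p}e^{(C_1+C_2+C_3+1)t}$ is a supersolution, where $u^{3/2}\le u$ (this is where $\epsilon^{1-p}e^{\frac12 CT}\le1$ enters) and $C_3u\ge C_3\epsilon^{2-2p}$ absorbs the drift term into the exponent rather than into a multiplicative prefactor. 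If you wish to keep your stopping-time formulation, either apply \eqref{eq:Mineq} in this compensated form instead of using a deterministic variance proxy, or replace the single Bernstein bound by a peeling or stochastic-Gr\"onwall argument; otherwise the $e^{CT/2}$ factor in $\lambda$ cannot be eliminated.
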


To prove Proposition \ref{prop:4.1}, we will use the exponential martingale inequality for continuous $L^2$ martingale $M_t$ as in \cite[p.~25]{hairer2008spectral} or \cite{bass1998diffusions} :
\begin{equation} \label{eq:Mineq}
  \mathbb{P} ( \sup_t  M_t - \tfrac{a}{2} \langle M \rangle_t > b) \le e^{- ab }
\end{equation}
where $\langle M \rangle_t$ is the quadratic variation for $M_t$.

For the martingale in  \eqref{eq:lemma4.1}, we calculate its quadratic variation in the following lemma.
\begin{lemma}
  The quadratic variation for the martingale \[ M_t = 2 \int_0^t \epsilon^{1-p} \frac 1N \sum_i \left( \textrm{P}_{\sigma_i}^{\perp} \left( dW_i(s),e_i(s) \right) \right) \] is
  \begin{equation}
   \langle M \rangle_t = 4 \int_0^t \frac{\epsilon^{2-2p}}{N^2} \sum_i \left[ \|e_i(s)\|^2 - (\sigma_i(s),e_i(s))^2 \right]\df s.
  \end{equation}
\end{lemma}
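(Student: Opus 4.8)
The plan is to recognize $M_t$ as a sum of It\^o integrals against the independent driving Brownian motions $W_i$ and simply read off the quadratic variation. First I would use the symmetry and idempotency of the tangential projection $\textrm{P}_{\sigma_i}^\perp = I - \sigma_i \sigma_i^T$ to move the projection off the noise increment and onto the error vector,
\[
\left( \textrm{P}_{\sigma_i}^\perp \left( \df W_i, e_i \right) \right) = \left( \df W_i, \textrm{P}_{\sigma_i}^\perp e_i \right),
\]
so that
\[
M_t = \frac{2\epsilon^{1-p}}{N} \sum_{i=1}^N \int_0^t \left( \textrm{P}_{\sigma_i(s)}^\perp e_i(s), \df W_i(s) \right).
\]
Each summand is a continuous $L^2$ martingale, since the integrand is adapted and square-integrable on $[0,T]$ (the $\sigma_i$ are unit vectors and $e_i = \epsilon^{-p}(\sigma_i-\tilde\sigma_i)$ is controlled by Lemma~\ref{lemma:4.1}).

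Next I would apply the standard rule that for vector-valued adapted integrands $f_i$ and \emph{independent} Brownian motions $W_i$ one has
\[
\left\langle \sum_i \int_0^\cdot \left( f_i(s), \df W_i(s) \right) \right\rangle_t = \sum_i \int_0^t \|f_i(s)\|^2 \, \df s,
\]
the cross terms for $i\neq j$ vanishing by independence of $W_i$ and $W_j$. Taking $f_i = \frac{2\epsilon^{1-p}}{N}\textrm{P}_{\sigma_i}^\perp e_i$ gives
\[
\langle M \rangle_t = \frac{4\epsilon^{2-2p}}{N^2} \sum_i \int_0^t \left\| \textrm{P}_{\sigma_i(s)}^\perp e_i(s) \right\|^2 \df s.
\]
Finally I would simplify the integrand pointwise using $\|\sigma_i\|=1$:
\[
\left\| \textrm{P}_{\sigma_i}^\perp e_i \right\|^2 = \left\| e_i - (\sigma_i,e_i)\sigma_i \right\|^2 = \|e_i\|^2 - 2(\sigma_i,e_i)^2 + (\sigma_i,e_i)^2 = \|e_i\|^2 - (\sigma_i,e_i)^2,
\]
which one may equally derive from the cross-product form $\textrm{P}_{\sigma_i}^\perp e_i = \sigma_i \times e_i$ via $\|\sigma_i\times e_i\|^2 = \|\sigma_i\|^2\|e_i\|^2 - (\sigma_i,e_i)^2$, confirming the formula is insensitive to the choice of projection. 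Substituting yields exactly the claimed expression.

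There is no substantive obstacle here; the computation is routine. The only points that need a little care are the bookkeeping of the $1/N$ and $\epsilon^{1-p}$ prefactors (which enter $\langle M\rangle_t$ squared), checking that distinct $W_i$ contribute no covariation, and verifying the integrand is square-integrable so that $M_t$ is genuinely a continuous $L^2$ martingale, as required for the subsequent application of the exponential martingale inequality \eqref{eq:Mineq}.
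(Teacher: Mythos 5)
Your proof is correct and follows essentially the same route as the paper: the paper's computation also amounts to summing the squares of the coefficients $e_i^\zeta-(\sigma_i,e_i)\sigma_i^\zeta$ of the independent noise components, i.e.\ $\|\textrm{P}_{\sigma_i}^\perp e_i\|^2$, and then simplifying with $\|\sigma_i\|=1$ to get $\|e_i\|^2-(\sigma_i,e_i)^2$. Your version merely packages the coordinate bookkeeping by moving the projection onto $e_i$ first, which is a harmless rephrasing.
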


\begin{proof}
  The quadratic variation for $M_t$ is captured by a direct summation of the square of the coefficients of the white noise, namely
  $$\begin{aligned}
   \bigg\langle 2 \int_0^t \epsilon^{1-p} &\frac 1N \sum_i \left( \textrm{P}_{\sigma_i}^{\perp} \left( dW_i(s),e_i(s) \right) \right) \bigg\rangle_t 
    =  \frac{4 \epsilon^{2-2p}}{N^2} \left \langle \int_0^t \sum_i \left( \textrm{P}_{\sigma_i}^\perp (\df W_i(s)),e_i(s) \right) \right \rangle_t  \\
    &=   \frac{4 \epsilon^{2-2p}}{N^2} \left \langle \int_0^t \sum_i \df W_i(s) \cdot e_i(s) - (\df W_i(s), \sigma_i(s)) \sigma_i(s) \cdot e_i(s) \right \rangle_t \\
    &=   \frac{4\epsilon^{2-2p}}{N^2} \left \langle \int_0^t \sum_i \sum_{\zeta=x,y,z} \left( e_i^\zeta(s)-(\sigma_i(s),e_i(s))\sigma_i^\zeta(s) \right) \df W_i^\zeta(s) \right \rangle_t \\
    &=  \frac{4\epsilon^{2-2p}}{N^2} \int_0^t \sum_i \sum_{\zeta=x,y,z} \left( e_i^\zeta(s)-(\sigma_i(s),e_i(s))\sigma_i^\zeta(s) \right)^2 \df s \\
    &=  \frac{4\epsilon^{2-2p}}{N^2} \int_0^t \sum_i \sum_{\zeta=x,y,z} \bigg[  (e_i^\zeta(s))^2+(\sigma_i(s),e_i(s))^2 (\sigma_i^\zeta(s))^2 \\
    & \qquad - 2 e_i^\zeta(s) \sigma_i^\zeta(s) (\sigma_i(s),e_i(s))\bigg] \df s \\
   & =  \frac{4\epsilon^{2-2p}}{N^2} \int_0^t \sum_i \bigg[ \|e_i(s)\|^2 + (\sigma_i(s),e_i(s))^2 \|\sigma_i(s)\|^2 \\
   & \qquad-2 (\sigma_i(s),e_i(s))(\sigma_i(s),e_i(s)) \bigg]\df s\\
    &=  \frac{4\epsilon^{2-2p}}{N^2} \int_0^t \sum_i \bigg[ \|e_i(s)\|^2 - (\sigma_i(s),e_i(s))^2 \bigg]\df s.
 \end{aligned}$$

\end{proof}

Now taking $a = \frac{N}{2 \epsilon^{2-2p}}$ and $b = \epsilon^{2-2p}$ in the inequality \eqref{eq:Mineq}, we have that 
\begin{dmath} 
  \mathbb{P} \left( \sup_{0 \le s \le t} 2 \int_0^s \epsilon^{1-p} \frac 1N \sum_i  \textrm{P}_{\sigma_i}^{\perp} \left( dW_i(u),e_i(u)  \right)  -  \int_0^s \frac 1N \sum_i \left(\|e_i(u)\|^2 - (\sigma_i(u),e_i(u))^2 \right) \df u > \epsilon^{2-2p} \right) \le e^{- N/2 }.
\end{dmath}
Thus, with probability $ \ge 1 - e^{-N/2}$, the following inequality holds
\begin{align*}
& 2 \int_0^t \epsilon^{1-p} \frac 1N \sum_i \left( \textrm{P}_{\sigma_i}^{\perp} \left( dW_i(s),e_i(s) \right) \right) - \int_0^t \frac 1N \sum_i \left(\|e_i(s)\|^2 - (\sigma_i(s),e_i(s))^2 \right) \df s \\
& \hspace{0.5cm}  \le  \sup_{0 \le s \le t} 2 \int_0^s  \frac{\epsilon^{1-p}}{N} \sum_i \left( \textrm{P}_{\sigma_i}^{\perp} \left( dW_i(u),e_i(u) \right) \right) \df u  \\
& \hspace{0.75cm} -  \int_0^s \frac 1N \sum_i \left(\|e_i(u)\|^2 - (\sigma_i(u),e_i(u))^2 \right) \df u \\
& \hspace{0.5cm}  \le  \epsilon^{2-2p}.
\end{align*}
Combining this with \eqref{eq:lemma4.1}, we observe that
\begin{align} \label{eq:lemma4.2}
  e(t) 
  &\le  \int_0^t \left( C_1 e^{3/2}(s) + C_2 e(s) \right) \df s + C_3 \epsilon^{2-2p} t + 2 \int_0^t \epsilon^{1-p} \frac 1N \sum_i \left( \textrm{P}_{\sigma_i}^{\perp} \left( dW_i(s),e_i(s) \right) \right) \nonumber\\
  &  \le  \int_0^t \left( C_1 e^{3/2}(s) + C_2 e(s) \right) \df s + C_3 \epsilon^{2-2p} t \nonumber\\
  &\qquad + \int_0^t \frac 1N \sum_i \left(\|e_i(s)\|^2 -
  (\sigma_i(s),e_i(s))^2 \right) \df s + \epsilon^{2-2p} \nonumber\\
  & \le  \int_0^t \left( C_1 e^{3/2}(s) + (C_2+1) e(s) \right) \df s + C_3 \epsilon^{2-2p} t + \epsilon^{2-2p},
\end{align}
where we have used the definition of $e(t)$ and that $(\sigma_i(t),e_i(t))^2 \le \|e_i(t)\|^2$.

Since \eqref{eq:lemma4.2} is a Gr\"onwall type inequality, we build a special upper solution $u(t) = \epsilon^{2-2p} e^{ (C_1+C_2+C_3+1) t}$. Then
\[ \frac{\df u}{\df t} = (C_1+C_2+C_3+1) u(t) \ge C_1 u^{3/2}(t) + (C_2+1)u(t) + C_3 \epsilon^{2-2p}, \]
where $C_1 u(t) \ge C_1 u^{3/2}(t)$ when $\epsilon^{1-p} e^{ \frac 12 (C_1+C_2+C_3+2) t} \le 1$. We observe that 
\[ e^{(C_1+C_2+C_3+1) t } \ge 1 + (C_1+C_2+C_3+1) t, \]
so  $ C_3 u(t) \ge C_3 \epsilon^{2-2p}$. As $u(0) = \epsilon^{2-2p}$, we have that
\begin{equation}
  u(t) \ge \int_0^t \left( C_1 u^{3/2}(s) + (C_2+1) u(s) \right) \df s + C_3 \epsilon^{2-2p} t + \epsilon^{2-2p}
\end{equation}
and $u(t)$ is an upper bound for $e(t)$.

\subsection{Proof of Theorem \ref{thm:2}}

When $\epsilon$ is small enough we have that $\epsilon^{1-p} e^{ \frac 12 (C_1+C_2+C_3+2) T} \le 1$ and $u(t)$ is an upper bound for $e(t)$. We observe that with $u(t) =  \epsilon^{2-2p} e^{ (C_1+C_2+C_3+1) t}$,
\begin{equation}
  \mathbb{P} \left( e(t) \le u(t)  \right) \ge 1 - e^{-N/2} , \quad t \in [0,T]
\end{equation}
and the inequality in Proposition~\ref{prop:4.1} follows.

In fact, as $e_i(t) = \epsilon^{-p} ( \sigma_i(t) - \tilde{\sigma}_i(t))$, the bound $e(t) = \frac 1N \sum_i \|e_i(t)\|^2 \le C$ by some constant $C$ is sufficient. Appealing to a stopping time argument similar to the strong uniqueness proof of the SDE with locally Lipschitz continuous coefficients (see e.g.\ \cite[Chapter~5.2,Theorem~2.5]{karatzas2012brownian}), we have the following result.
\begin{proposition}
Given $\epsilon,N > 0$, there exists a constant $C$ independent of $\epsilon,N$ such that if $\epsilon^{2-2p} T e^{CT} \le 1$, then 
   \[
  \E[]{e(t)} \le 1, \quad t \in [0,T].
  \]
  
  \end{proposition}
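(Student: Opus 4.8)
The plan is to mimic the classical localization (stopping time) argument used to prove pathwise uniqueness of SDEs with locally Lipschitz coefficients. The obstruction to applying Gr\"onwall directly to $\E[]{e(t)}$ is the superlinear term $C_1 e^{3/2}$ in the pathwise inequality \eqref{eq:lemma4.1}: since $\E[]{e^{3/2}}$ is not controlled by $\bigl(\E[]{e}\bigr)^{3/2}$, the estimate cannot be closed in expectation without first cutting off the nonlinearity. What makes the cutoff harmless is that every spin stays on $\mathbb{S}^2$: the It\^o SDE \eqref{eq:SDE} preserves $\|\sigma_i\|=1$ and the ODE \eqref{eq:fde} preserves $\|\tilde\sigma_i\|=1$, so $\|e_i(t)\|=\epsilon^{-p}\|\sigma_i(t)-\tilde\sigma_i(t)\|\le 2\epsilon^{-p}$ and hence we have the deterministic, pathwise bound $0\le e(t)\le 4\epsilon^{-2p}$ for all $t$, with $e(0)=0$ since both systems start from the same data. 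In particular $e$ has continuous sample paths.

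First I would fix a level $M>0$ (any fixed choice works, e.g. $M=2$) and set $\tau_M=\inf\{t\ge0:e(t)\ge M\}$, so that $e(s)\le M$ for $s\le t\wedge\tau_M$ and, by continuity, $e(\tau_M)=M$ on $\{\tau_M<\infty\}$. On $[0,t\wedge\tau_M]$ the nonlinearity linearizes, $e^{3/2}\le\sqrt M\,e$, so \eqref{eq:lemma4.1} at time $t\wedge\tau_M$ reads
\[ e(t\wedge\tau_M)\le(C_1\sqrt M+C_2)\int_0^{t\wedge\tau_M}e(s)\,\df s+C_3\,\epsilon^{2-2p}T+M_{t\wedge\tau_M}, \]
where $M_t=2\int_0^t\epsilon^{1-p}\frac1N\sum_i\textrm{P}_{\sigma_i}^\perp(\df W_i(s),e_i(s))$ is the martingale appearing in \eqref{eq:lemma4.1}, whose quadratic variation was computed above. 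Since $\|e_i\|\le 2\epsilon^{-p}$, that quadratic variation is bounded on $[0,T]$, so $M_{\cdot\wedge\tau_M}$ is a genuine square-integrable martingale and $\E[]{M_{t\wedge\tau_M}}=0$. Taking expectations, using $e\ge0$ to replace $\int_0^{t\wedge\tau_M}e$ by $\int_0^t e(s\wedge\tau_M)\,\df s$, and applying the linear Gr\"onwall inequality to $f(t):=\E[]{e(t\wedge\tau_M)}$ gives
\[ \E[]{e(t\wedge\tau_M)}\le C_3\,\epsilon^{2-2p}\,T\,e^{(C_1\sqrt M+C_2)T},\qquad t\in[0,T]. \]

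Next I would remove the localization. On $\{\tau_M\le t\}$ we have $e(t\wedge\tau_M)=M$, hence $\mathbb{P}(\tau_M\le t)\le M^{-1}\E[]{e(t\wedge\tau_M)}$; and writing $e(t)=e(t\wedge\tau_M)+\bigl(e(t)-e(\tau_M)\bigr)\I{\tau_M\le t}$ and controlling $e(t)-e(\tau_M)\le 4\epsilon^{-2p}$ by the a priori bound,
\[ \E[]{e(t)}\le\E[]{e(t\wedge\tau_M)}+4\epsilon^{-2p}\,\mathbb{P}(\tau_M\le t)\le C_3\,\epsilon^{2-2p}\,T\,e^{(C_1\sqrt M+C_2)T}\Bigl(1+\tfrac{4}{M}\epsilon^{-2p}\Bigr). \]
Because $p<\tfrac12$ (as in Theorem \ref{thm:2}), the correction $\epsilon^{2-2p}\cdot\epsilon^{-2p}=\epsilon^{2-4p}\to0$ as $\epsilon\to0$, so for $\epsilon$ small the right-hand side is at most $2C_3\,\epsilon^{2-2p}\,T\,e^{CT}$ with $C:=C_1\sqrt M+C_2$; absorbing the numerical factor into $C$ (or into the smallness hypothesis), $\epsilon^{2-2p}Te^{CT}\le1$ forces $\E[]{e(t)}\le1$ on $[0,T]$. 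As a cross-check, one may instead insert the high-probability estimate of Proposition \ref{prop:4.1} together with $e\le4\epsilon^{-2p}$ directly into $\E[]{e(t)}=\E[]{e(t)\I{e(t)\le\epsilon^{2-2p}e^{Ct}}}+\E[]{e(t)\I{e(t)>\epsilon^{2-2p}e^{Ct}}}$, which yields the same conclusion since $\epsilon^{-2p}e^{-N/2}$ is negligible next to $\epsilon^{2-2p}$.

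I expect the only genuine difficulty to be the bookkeeping on the exceptional event $\{\tau_M\le t\}$: one must verify that the crude bound $e\le4\epsilon^{-2p}$, inflated by the factor $\epsilon^{-2p}$, is still dominated by the smallness of $\mathbb{P}(\tau_M\le t)$, and this is exactly where the restriction $p<\tfrac12$ enters. The linearization of $e^{3/2}$, the vanishing of the stopped martingale, and the Gr\"onwall step are all routine once the a priori sphere bound is in hand.
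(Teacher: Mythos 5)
Your overall strategy coincides with the paper's: stop the process at a fixed level of $e$ so that the superlinear term linearizes, note the stopped stochastic integral has zero mean, and apply Gr\"onwall to the stopped expectation. Up to the bound $\E[]{e(t\wedge\tau_M)}\le C_3\,\epsilon^{2-2p}\,T\,e^{(C_1\sqrt M+C_2)T}$ your argument is correct and is essentially the paper's proof (the paper takes the level to be $1$, so that $e^{3/2}\le e$ directly, rather than $e^{3/2}\le\sqrt M\,e$).

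The genuine gap is in your de-localization. From $\E[]{e(t)}\le\E[]{e(t\wedge\tau_M)}\bigl(1+\tfrac{4}{M}\epsilon^{-2p}\bigr)$ you assert that, since $p<\tfrac12$, the correction is harmless for small $\epsilon$ and the right-hand side is at most $2C_3\,\epsilon^{2-2p}\,T\,e^{CT}$. This is backwards: $\epsilon^{-2p}\to\infty$ as $\epsilon\to0$, so the extra term $\tfrac{4C_3}{M}\epsilon^{2-4p}\,T\,e^{CT}$ \emph{dominates} the main one, and under the stated hypothesis $\epsilon^{2-2p}Te^{CT}\le1$ it is only bounded by $\tfrac{4C_3}{M}\epsilon^{-2p}$, which diverges. (That $\epsilon^{2-4p}\to0$ is irrelevant; what your absorption needs is $\epsilon^{2-4p}\le\epsilon^{2-2p}$, which fails for $\epsilon<1$.) As written, your chain of inequalities proves the proposition only under the stronger smallness condition $\epsilon^{2-4p}Te^{CT}\lesssim 1$ --- a condition of the same flavor as the $\tilde{C}_1\epsilon^{2-4p}/N$ term that the paper needs only in the subsequent uniform-in-time proposition, where Doob plus It\^o isometry supply an extra factor $1/N$. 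The paper avoids producing this term at all: it stops at level $1$, works with the deterministic time $T=\min\{t:\E[]{e(t)}\ge1\}$, and only ever asserts the Gr\"onwall bound for the stopped quantity $\E[]{e(t\wedge\tau)}$; the passage back from $\E[]{e(t\wedge\tau)}$ to $\E[]{e(t)}$ --- precisely the step you try to make explicit --- is left implicit there. To repair your route you must either strengthen the hypothesis as above, or replace the crude bound $e\le4\epsilon^{-2p}$ on the exceptional event by the exponentially small probability $e^{-N/2}$ from Proposition \ref{prop:4.1} (your ``cross-check''), but note that this imports that proposition's own hypotheses and a largeness requirement on $N$ relative to $\log(1/\epsilon)$ that is not part of the statement being proved, whose constants are claimed independent of $\epsilon$ and $N$.
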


\begin{proof}
  Define a deterministic time $T \equiv \min \{ t \in [0,\infty]: \E[]{e(t)} \ge 1 \}$ and a stopping time $\tau \equiv \min \{ t \in [0,T]: e(t) \ge 1\}$. Since $e(0)=0$, we have both $T>0$ and $\tau>0$. If $T$ is infinite then we are done, so assume $T$ is bounded.

  From Lemma \ref{lemma:4.1},
  \begin{equation}
    e(t \wedge \tau) \le \epsilon^{2-2p} (t \wedge \tau) + \int_0^{t \wedge \tau} C_1 e^{3/2}(s) + C_2 e(s) \df s+  \int_0^{t \wedge \tau}   \frac{ \epsilon^{1-p}}{N} \sum_i \textrm{P}_{\sigma_i}^\perp \left( \df W_i(s) \right).
  \end{equation}
  As $e\ge 0$, we have that
  \begin{equation}
    e(t\wedge \tau) \le \epsilon^{2-2p} (t \wedge \tau) + \int_0^t C_1 e^{3/2}(s \wedge \tau) + C_2 e(s \wedge \tau) \df s +  \int_0^{t \wedge \tau} \epsilon^{1-p} \frac 1N \sum_i \textrm{P}_{\sigma_i}^\perp \left( \df W_i(s)\right)
  \end{equation}
  and $ e^{3/2} (s \wedge \tau) \le e(s \wedge \tau)$ as $e(s \wedge \tau) \le 1$, so
  \begin{equation}
    e(t \wedge \tau) \le \epsilon^{2-2p} (t \wedge \tau) + \int_0^t (C_1+C_2) e(s \wedge \tau) \df s +  \int_0^{t \wedge \tau} \epsilon^{1-p} \frac 1N \sum_i \textrm{P}_{\sigma_i}^\perp \left( \df W_i(s) \right).
  \end{equation}
  Taking the expectation on both sides, we arrive at the fact that
  \begin{equation}
    \E[]{e(t \wedge \tau)} \le \epsilon^{2-2p} \E{(t \wedge \tau)} + \int_0^t (C_1+C_2) \E[]{e(s \wedge \tau)} \df s .
  \end{equation}
 The expectation $\E[]{\int_0^{t \wedge \tau} \textrm{P}_{\sigma_i}^\perp \left( \df W_i(s) \right)} = 0$ can be deduced from the optional stopping theorem for continuous time or take $\int_0^{t\wedge \tau} \textrm{P}_{\sigma_i}^\perp \left( \df W_i(s) \right) = \int_0^{T} \mathbbm{1}(s< t \wedge \tau) \textrm{P}_{\sigma_i}^\perp \left( \df W_i(s) \right)$.
  Notice $\E[]{(t \wedge \tau)} \le T$, so
  \[
  \E[]{e(t \wedge \tau)} \le \int_0^t (C_1+C_2) \E[]{e(s \wedge \tau)} \df s + \epsilon^{2-2p} T
  \]
  and by a Gr\"onwall's inequality,
  \begin{equation}
    \E[]{e(t\wedge \tau)} \le \epsilon^{2-2p} T e^{(C_1+C_2)t}.
  \end{equation}
  Choosing $C > C_1 + C_2$, the result follows.  
  
  \end{proof}

By similar arguments, a uniform bound on $e(t)$ can be obtained with a weaker condition on $T$.
\begin{proposition}
Given $\epsilon, N > 0$, there exist constants $\tilde{C}_1,\tilde{C}_2$ independent of $\epsilon,N$ such that if $(\epsilon^{2-2p} T + \tilde{C}_1 \frac{\epsilon^{2-4p}}{N} ) e^{\tilde{C}_2T} \le 1$, for $0<p<\tfrac12$, then   
  \[
  \E[]{\sup_{0\le s\le t} e(s)} \le 1, \quad t \in [0,T].
  \]

\end{proposition}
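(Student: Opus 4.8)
The plan is to repeat the stopping-time / Grönwall bootstrap of the previous proposition, but now carry the running supremum $\sup_{0\le s\le t}e(s)$ through the estimate and pay for the supremum of the stochastic term with a martingale maximal inequality. Keeping the $T$ from the hypothesis, set $\tau\equiv\inf\{t\ge 0:e(t)\ge 1\}$ (so $\tau>0$ since $e(0)=0$) and let $f(t)\equiv\E[]{\sup_{0\le s\le t\wedge\tau}e(s)}$. Starting from the integrated inequality \eqref{eq:lemma4.1} of Lemma~\ref{lemma:4.1} evaluated at any $s\le t\wedge\tau$, using that the drift integrals are nondecreasing in their upper limit and that $e^{3/2}(u)\le e(u)$ for $u\le\tau$ (because $e\le 1$ there), one obtains the pathwise bound
\[
  \sup_{0\le s\le t\wedge\tau}e(s)\;\le\;(C_1+C_2)\int_0^{t\wedge\tau}e(u)\,\df u\;+\;C_3\,\epsilon^{2-2p}\,T\;+\;\sup_{0\le s\le t\wedge\tau}\bigl|M_s\bigr|,
\]
where $M_t=2\int_0^t\epsilon^{1-p}\frac1N\sum_i\bigl(\textrm{P}_{\sigma_i}^\perp(\df W_i(s),e_i(s))\bigr)$ is the martingale appearing in \eqref{eq:lemma4.1}.

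The one genuinely new ingredient is controlling $\E[]{\sup_{s\le t\wedge\tau}|M_s|}$. Combining the Burkholder--Davis--Gundy inequality (equivalently Doob's $L^2$ inequality and Cauchy--Schwarz) with the quadratic-variation formula from the lemma computing $\langle M\rangle_t$, the crude bound $\sum_i[\,\|e_i\|^2-(\sigma_i,e_i)^2\,]\le\sum_i\|e_i\|^2=Ne$, and Jensen's inequality to pull the expectation inside, gives
\[
  \E[]{\sup_{0\le s\le t\wedge\tau}|M_s|}\;\le\;c\,\E[]{\langle M\rangle_{t\wedge\tau}^{1/2}}\;\le\;c\Bigl(\tfrac{4\epsilon^{2-2p}}{N}\Bigr)^{1/2}\Bigl(\int_0^t f(u)\,\df u\Bigr)^{1/2}.
\]
Applying Young's inequality $\sqrt{ab}\le\tfrac{\eta}{2}a+\tfrac{1}{2\eta}b$ with the split parameter $\eta=\epsilon^{2p}\le 1$ (recall $\epsilon=\sqrt{N/\beta}\le 1$) converts this into $\tfrac{\epsilon^{2p}}{2}\int_0^t f(u)\,\df u+\tilde C_1\tfrac{\epsilon^{2-4p}}{N}$: the first piece is absorbed into the Grönwall integral, the second is precisely the $\epsilon^{2-4p}/N$ correction in the hypothesis. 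Since $\E[]{\int_0^{t\wedge\tau}e(u)\,\df u}\le\int_0^t f(u)\,\df u$, taking expectations in the pathwise bound yields
\[
  f(t)\;\le\;\bigl(C_1+C_2+\tfrac12\bigr)\int_0^t f(u)\,\df u\;+\;C_3\,\epsilon^{2-2p}\,T\;+\;\tilde C_1\,\tfrac{\epsilon^{2-4p}}{N},
\]
and Grönwall's inequality gives $f(t)\le\bigl(\epsilon^{2-2p}T+\tilde C_1\tfrac{\epsilon^{2-4p}}{N}\bigr)e^{\tilde C_2 t}$ after relabeling the remaining $O(1)$ constants into $\tilde C_1,\tilde C_2$ (as already done for the $C_3$ prefactor in the previous proposition).

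Under the hypothesis $(\epsilon^{2-2p}T+\tilde C_1\epsilon^{2-4p}/N)e^{\tilde C_2 T}\le 1$ this gives $\E[]{\sup_{0\le s\le T\wedge\tau}e(s)}\le 1$, and the continuity/bootstrap argument of the previous proposition (taking $T$ to be the first time $\E[]{\sup_{0\le s\le t}e(s)}$ would reach $1$) forces $\tau\ge T$, so $t\wedge\tau$ may be replaced by $t$ throughout and the claim follows. The restriction $0<p<\tfrac12$ enters exactly here: it is equivalent to $2-4p>0$, which is what makes the new term $\epsilon^{2-4p}/N\to 0$ as $\epsilon=\sqrt{N/\beta}\to 0$ (i.e.\ $N\to\infty$), so that the hypothesis can be met. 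The main obstacle is the martingale maximal term: one must bound $\E[]{\sup_s|M_s|}$ in terms of the still-unknown $f(t)$ plus a harmless remainder, and it is the tuning of the Young parameter $\eta\sim\epsilon^{2p}$ that simultaneously keeps the Grönwall constant uniform in $(\epsilon,N)$ and produces the advertised $\epsilon^{2-4p}/N$ term; everything else is a routine rerun of the earlier stopping-time estimate.
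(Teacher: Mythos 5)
Your proof is correct and follows the paper's overall strategy (stopping time $\tau$ at the level set $e=1$, $e^{3/2}\le e$ before $\tau$, a pathwise supremum bound, then Gr\"onwall), but it treats the decisive martingale-supremum term by a genuinely different estimate. The paper bounds $\E[]{\big(\sup_{0\le s\le t}M_s\big)^2}\le 4\E[]{M_t^2}$ via Doob's $L^2$ inequality and then controls the quadratic variation with the crude deterministic bound $\|\ep^p e_i\|=\|\sigma_i-\tilde\sigma_i\|\le 2$, i.e.\ $\sum_i\|e_i\|^2\le 4N\ep^{-2p}$, which gives $\E[]{\langle M\rangle_t}\le C\ep^{2-4p}/N$ and hence the correction term in the hypothesis; you instead keep $\langle M\rangle$ expressed through $e$ itself ($\sum_i[\|e_i\|^2-(\sigma_i,e_i)^2]\le Ne$), bound $\E[]{\sup_s|M_s|}$ by $c\,\E[]{\langle M\rangle^{1/2}}$ (BDG, or Doob plus Cauchy--Schwarz and Jensen), and then use Young's inequality with the parameter $\eta=\ep^{2p}$ to split the result into a piece absorbed into the Gr\"onwall integral and the remainder $\tilde C_1\ep^{2-4p}/N$. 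This buys you something: passing from the paper's Doob bound on the second moment to a bound on $\E[]{\sup_s|M_s|}$ by Cauchy--Schwarz would naturally produce $\ep^{1-2p}/\sqrt N$ rather than the advertised $\ep^{2-4p}/N$ (a step the paper elides), whereas your absorption argument delivers exactly the $\ep^{2-4p}/N$ term in the statement, at the cost of an $\ep$-dependent Young parameter and a slightly larger Gr\"onwall constant. The remaining ingredients --- $\E[]{\int_0^{t\wedge\tau}e}\le\int_0^t f$, the relabeling of the $C_3$ prefactor, the role of $p<\tfrac12$ in making $\ep^{2-4p}/N$ small, and the final bootstrap replacing $t\wedge\tau$ by $t$ --- are handled at the same level of rigor as in the paper.
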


\begin{proof}
  Define $T \equiv \min \{t: \E[]{\sup_{0\le s\le t} e(s)} \ge 1 \}$ and $\tau \equiv \min \{ t \in [0,T]: e(t) \ge 1\}$. Since $e(0)=0$ we still have $T>0,\tau>0$.

  Again from Lemma \ref{lemma:4.1}, we observe that
  \begin{dmath}
    e(t\wedge \tau) \le \int_0^t \left[ C_1 e^{3/2}(s \wedge \tau) + C_2 e(s \wedge \tau) \right]\df s + \epsilon^{2-2p} (t \wedge \tau) 
    + \int_0^{t \wedge \tau}
    \epsilon^{1-p} \frac 1N \sum_i  \left( \textrm{P}_{\sigma_i}^\perp \left( \df W_i(s) \right),e_i(s) \right).
  \end{dmath}
  Taking the supremum on both sides, we arrive at the fact that
  \begin{equation}\begin{aligned}
    \sup_{0 \le s \le t} e(s \wedge \tau) 
   & \le  \sup_{0 \le s \le t} \int_0^s \left[ C_1 e^{3/2}(u \wedge \tau) + C_2 e(u \wedge \tau) \right]\df u +  \sup_{0 \le s \le t} \epsilon^{2-2p} (s \wedge \tau) \\
   &  + \sup_{0 \le s \le t}  \int_0^{s \wedge \tau} \epsilon^{1-p} \frac 1N \sum_i
    \left( \textrm{P}_{\sigma_i}^\perp \left( \df W_i(u) \right),e_i(u) \right).
 \end{aligned}\end{equation}
  The above, together with $0 \le e(t\wedge \tau) \le 1$ and $t\wedge \tau \le T$, gives
  \begin{dmath}
    \sup_{0 \le s \le t} e(s \wedge \tau) \le \int_0^t (C_1 + C_2) e(u \wedge \tau) \df u +
    \epsilon^{2-2p} T + \sup_{0 \le s \le t}  \int_0^{s \wedge \tau} \epsilon^{1-p} \frac 1N \sum_i
    \left( \textrm{P}_{\sigma_i}^\perp \left( \df W_i(u) \right),e_i(u) \right).
  \end{dmath}
  Taking expectations on both sides, we arrive at the fact that
  \begin{multline} \label{eq:prop4.3}
    \E[]{\sup_{0 \le s \le t} e(s \wedge \tau)} 
    \le \int_0^t (C_1 + C_2) \E[]{e(u \wedge \tau)} \df u 
\\+ \epsilon^{2-2p} T + \E[]{\sup_{0 \le s \le t}  \int_0^{s \wedge \tau} \epsilon^{1-p} \frac 1N \sum_i \left( \textrm{P}_{\sigma_i}^\perp \left( \df W_i(u) \right),e_i(u) \right)}.
  \end{multline}
  For the first integral on the right hand side, we note that $\int_0^t (C_1 + C_2) \E[]{e(u \wedge \tau)} \df u \le \int_0^t (C_1 + C_2) \E{\sup_{0 \le s \le u} e(s \wedge \tau)} \df u$. Doob's Martingale inequality \eqref{eq:Doob} is used to give a bound of $C\epsilon^{1-2p}$ for the last expecation in \eqref{eq:prop4.3}.

  Denote
  \begin{equation*}
    M_s = \int_0^{s \wedge \tau}  \frac{\epsilon^{1-p}}{N} \sum_i \left( \textrm{P}_{\sigma_i}^\perp \left( \df W_i(u) \right),e_i(u) \right)  = \int_0^s \mathbbm{1}(u \le \tau)\frac{ \epsilon^{1-p} }{N} \sum_i \left( \textrm{P}_{\sigma_i}^\perp \left( \df W_i(u) \right),e_i(u) \right).
  \end{equation*}
  Since $\mathbbm{1}(u \le \tau) \in \mathcal{F}_u$, the above is a martingale. Using Doob's martingale inequality \eqref{eq:Doob} we have that
  \begin{equation*}
    \E[]{ \left( \sup_{0\le s \le t} M_s \right)^2} \le 4 \E[]{M_t^2} = 4 \int_0^t \E[]{ \frac{\epsilon^{2-2p}}{N^2} \sum_i \|e_i(u)\|^2 - (\sigma_i(u),e_i(u))^2  } \df u\le C \frac{\epsilon^{2-4p}}{N}.
  \end{equation*}
  The second equality is from It\^o isometry and the third inequality is because $\|\sigma_i\|,\|\tilde{\sigma}_i\|=1$ so $|(\sigma_i,e_i)| \le \|\sigma_i\|$ and $\|\epsilon^p e_i \| = \| \sigma_i - \tilde{\sigma}_i \| \le 2$.

  Now a Gr\"onwall's inequality gives
  \begin{equation}
    \E[]{\sup_{0 \le s \le t} e(s \wedge \tau)} \le \int_0^t (C_1 + C_2) \E[]{\sup_{0 \le s \le u} e(s \wedge \tau)} \df u + \epsilon^{2-2p} T +  C_3 \frac{\epsilon^{2-4p}}{N}
  \end{equation}
  and $ \E[]{\sup_{0 \le s \le t} e(s \wedge \tau)} \le \left( \epsilon^{2-2p} T + C_3 \frac{\epsilon^{2-4p}}{N} \right) e^{(C_1+C_2)t}$. As in the proof of the previous Proposition, choosing $\tilde{C}_2 > C_1 + C_2$ and $\tilde{C}_1 = C_3$, and assuming $0<p<\tfrac12$, the result thus follows similarly.
  
  \end{proof}

%
\section{Numerical results\label{sec:numerical}}
%

In this section, we present results from numerical simulations of the systems, showing both the temporal dynamics and the order of convergence. 
  The numerical convergence tests indicate that the error decays at least as well as predicted in Theorems \ref{thm:1} and \ref{thm:2}.   We check both the cases of the XY model and the classical Heisenberg model.  The dynamics of the M-H algorithm are simulated as explained in Sec.~\ref{sec:MHsetup}. To simulate the SDE \eqref{eq:SDE}, written in the It\^o sense, we use the stochastic Euler's method combined with a normalizing step to project the spin back onto the sphere after each time step for both the XY model and the classical Heisenberg model.   The PDE \eqref{eq:LLG} is numerically integrated by discretizing in space and using the Euler's method presented in \cite{weinan2001numerical} which includes a normalization step.

The out-of-equilibrium to equilibrium dynamics of the M-H algorithm, SDE, and discretized PDE are shown in Figure \ref{fig:dynamics}.  Figure \ref{fig:dynamics1} shows the $\mathbb{T}^1 \to \mathbb{S}^1$ case of the XY model in terms of the polar coordinate $\theta$ of each spin.  Figure \ref{fig:dynamics2} shows the $\mathbb{T}^1 \to \mathbb{S}^2$ case of the classical Heisenberg model with each spin plotted on the same unit sphere; nearest neighbors are connected by a solid line.  In both cases, the M-H dynamics tend to lag behind the SDE and PDE which more closely follow each other.  This suggests the error between the M-H algorithm and the PDE is dominated by the error between the M-H algorithm and the SDE.  Thus the order of convergence between M-H algorithm and harmonic map heat flow equation should almost follow the order of convergence in Theorem \ref{thm:1}.

\begin{figure}[t]
	\centering
	\begin{subfigure}{\textwidth}
		\includegraphics[width=\textwidth]{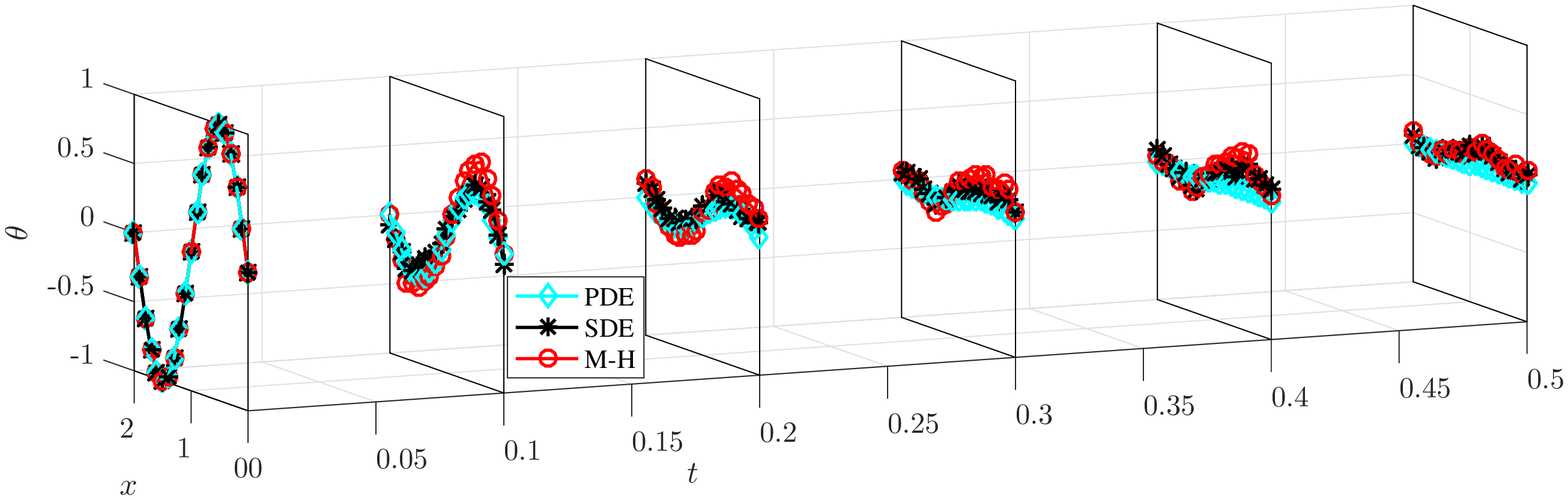}
		\caption{ $\mathbb{T}^1 \to \mathbb{S}^1$ }
		\label{fig:dynamics1}
	\end{subfigure}
	
	\begin{subfigure}{\textwidth}
		\includegraphics[width=\textwidth]{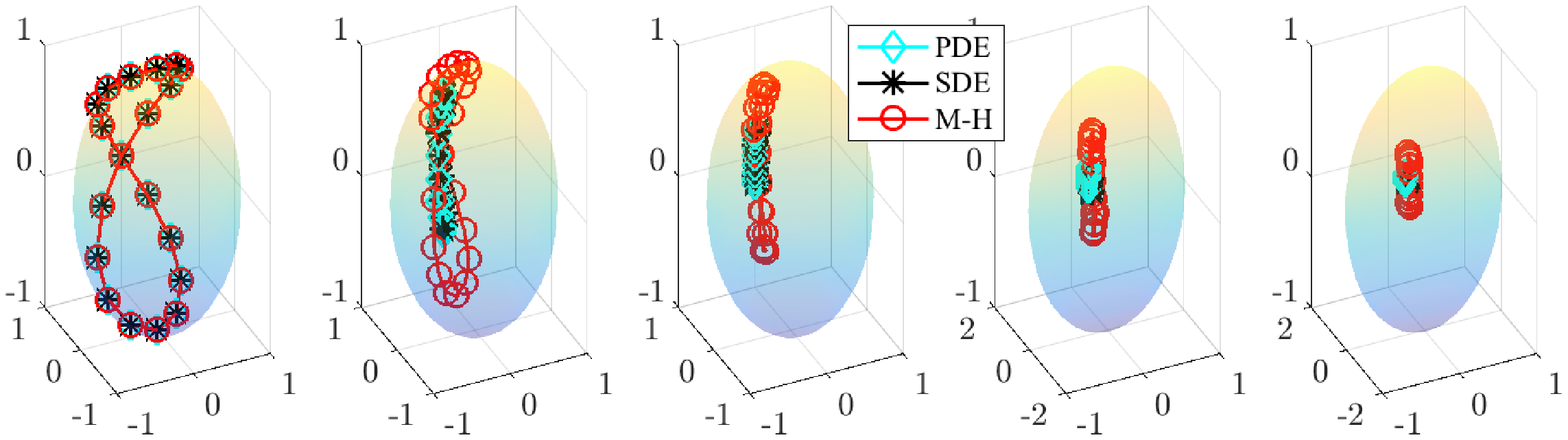}
		\caption{$\mathbb{T}^1 \to \mathbb{S}^2$}
		\label{fig:dynamics2}
	\end{subfigure}
	\caption{Dynamics of M-H algorithm (red circles), Langevin equation (black stars) and harmonic map heat flow equation (cyan diamonds) at various instances of time. They follow each other to converge to equilibrium. In both panels: lattice length $L=2$, space discretization $\delta x = \frac 1N = \frac{1}{10}$, time step size for M-H algorithm $\delta t=\frac{1}{N^3}= 0.001$, inverse temperature $\beta = N^{3/2}\approx 31.6$, proposal size $\varepsilon = \sqrt{\frac{N\delta t}{\beta}}\approx 0.0178$.}
	\label{fig:dynamics}
\end{figure}

Figures \ref{fig:order1} and \ref{fig:order2} show the order of convergence for the error between the M-H algorithm and the Langevin equation with respect to the time step size $\delta t$, for which the equivalent M-H proposal size is  $\varepsilon^2 = \delta t \frac{N}{J\beta} $.  The error is calculated at a fixed time $T$ as
\begin{equation}
\mathbb{E}\left[ \sqrt{ \frac{1}{N}\sum_{i=1}^N | \sigma_i^{\textrm{MH}}(T) - \sigma_i^{\textrm{SDE}}(T) |^2 } \;\right]
\end{equation} 
where the expectation is taken over multiple realizations.  All four frames support that the convergence is at least as good as $\delta t^{1/4}$, which is equivalent to the $\sqrt{\delta t}$ convergence given in Theorem \ref{thm:1}, since the 2-norm is used in the numerical experiments (thus the error is expected to be of order $(\sqrt{\delta t})^{\frac 12} = \delta t^{\frac 14}$).  The faster convergence of order $\delta t^{1/2}$ in panels (b) and (c) of Fig.~\ref{fig:order1} we suspect is due to the fact that these out-of-equilibrium dynamics are dominated by the deterministic part of the SDE, and this part has different error scaling from the noisy dynamics.  In equilibrium, the deterministic term, $P_{\sigma_i}^{\perp}(\delta_N\sigma_i)$, is small since it is zero at the minimum of the Hamiltonian (maximum of the Gibbs distribution), and the noisy part of the dynamics dominate.

Proposition \ref{prop:theta} states the error on the deterministic drift of one Metropolis step, $\theta_i^n$, is of size $\epsilon^3$.  Dividing by a time-step $\delta t$ that is proportional to $\epsilon^2$ so that the left-hand side approximates a derivative for the SDE, the resulting error is $O(\epsilon)$ or equivalently $O(\delta t^{1/2})$ as seen in the numerical simulations.  Similarly, from Proposition \ref{prop:phi} the error on the stochastic diffusion of one Metropolis step, $\phi_i^n$, is of size $\epsilon^{3/2}$ implying error of order $O(\delta t^{1/4})$, after dividing by the size of the first order term, $\epsilon$.  To further test if the difference in convergence order is from the deterministic terms dominating, we increase the size of the noise, $\sqrt{\frac N \beta}$,  in Eq.~\eqref{eq:SDE} by decreasing $\beta$ to make the noisy dynamics dominate.  The out-of-equilibrium error with small $\beta=1$ shown in Fig.~\ref{fig:order2} has $\delta t^{1/4}$ convergence, confirming the original statement of Theorem \ref{thm:1}.  We therefore conclude that the error bound of $\delta t^{1/4}$ is tight, and this error comes from the noisy part of the dynamics.

\begin{figure}[t]
	\centering
	\begin{subfigure}{0.475\textwidth}
		\includegraphics[width=\textwidth]{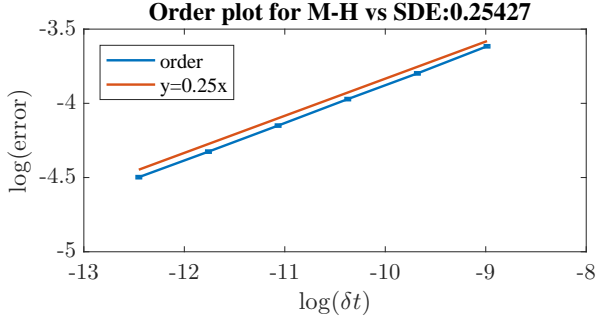}
		\caption{$\mathbb{T}^1 \to \mathbb{S}^1$: near equilibrium initial condition}
		\label{fig:order1a}
	\end{subfigure}
	\hfill
	\begin{subfigure}{0.475\textwidth}
		\includegraphics[width=\textwidth]{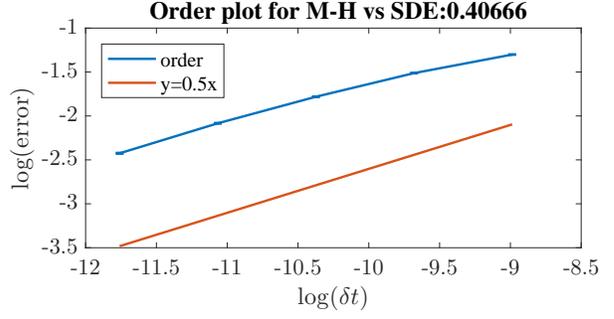}
		\caption{$\mathbb{T}^1 \to \mathbb{S}^1$: out of equilibrium initial condition}
		\label{fig:order1b}
	\end{subfigure}
	\vskip\baselineskip
	\begin{subfigure}{0.475\textwidth}
		\includegraphics[width=\textwidth]{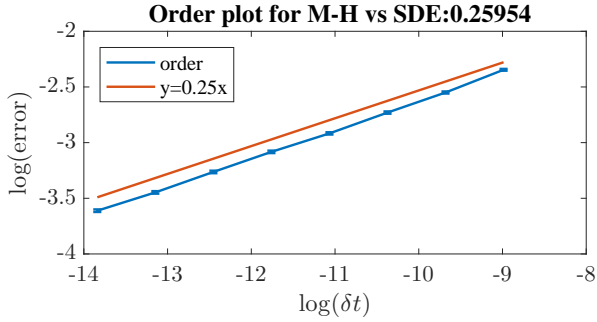}
		\caption{$\mathbb{T}^1 \to \mathbb{S}^2$: near equilibrium initial condition}
		\label{fig:order1c}
	\end{subfigure}
	\hfill
	\begin{subfigure}{0.475\textwidth}
		\includegraphics[width=\textwidth]{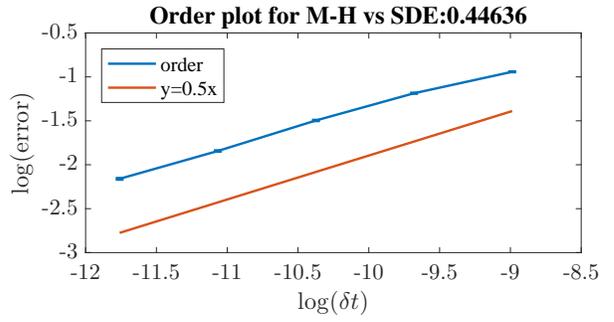}
		\caption{$\mathbb{T}^1 \to \mathbb{S}^2$: out of equilibrium initial condition}
		\label{fig:order1d}
	\end{subfigure}
	\caption{Order of convergence for the error between M-H algorithm and Langevin equation with respect to time step size $\delta t = \frac{J\beta}{N^2} \varepsilon^2$ for $\beta = N^{3/2}$. When the initial condition is near equilibrium, the order of convergence is approximately 0.25 as predicted in Theorem \ref{thm:1}. When the initial condition is out of equilibrium, the order is better than 0.25 and close to 0.5. In all four panels: lattice length $L=2$, space discretization $\delta x = \frac 1N = \frac{1}{10}$, time step size for M-H algorithm $\delta t=\frac{1}{N^3}$, inverse temperature $\beta = N^{3/2}$, proposal size $\varepsilon = \sqrt{\frac{N\delta t}{\beta}}$.}
	\label{fig:order1}
\end{figure}

\begin{figure}[h!]
	\centering
	\begin{subfigure}{0.475\textwidth}
		\includegraphics[width=\textwidth]{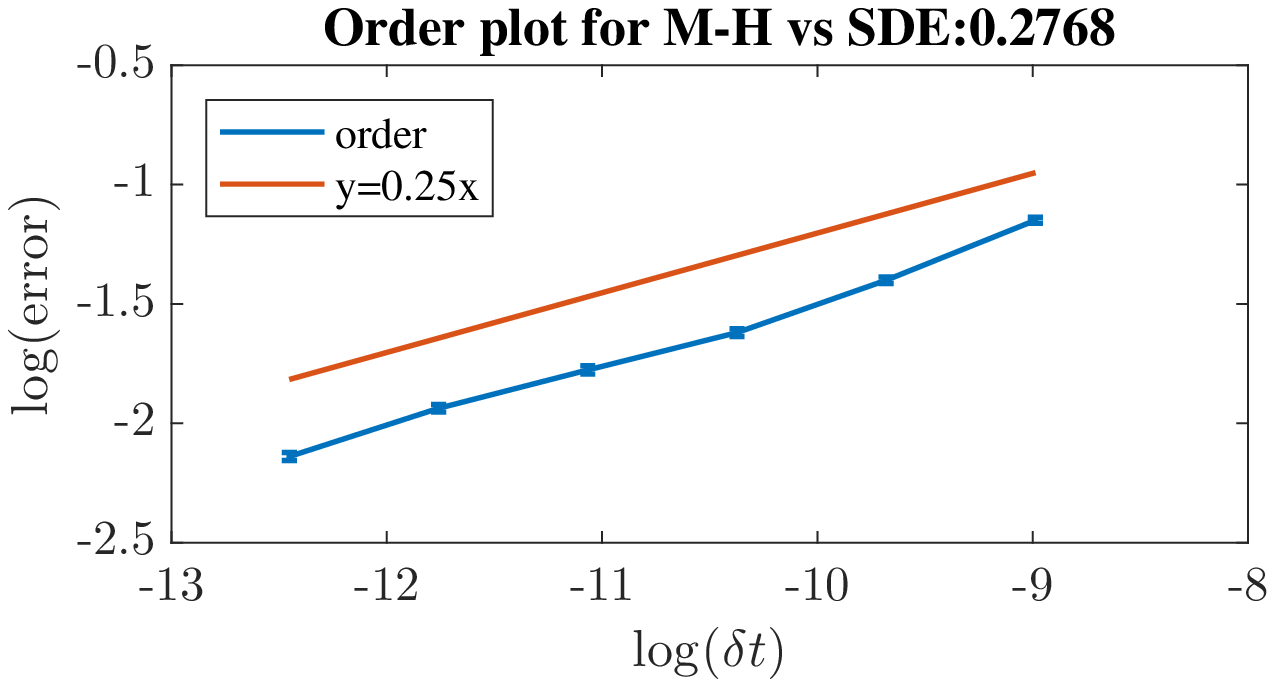}
		\caption{$\mathbb{T}^1 \to \mathbb{S}^1$}
		\label{fig:2dsde}
	\end{subfigure}
	\hfill
	\begin{subfigure}{0.475\textwidth}
		\includegraphics[width=\textwidth]{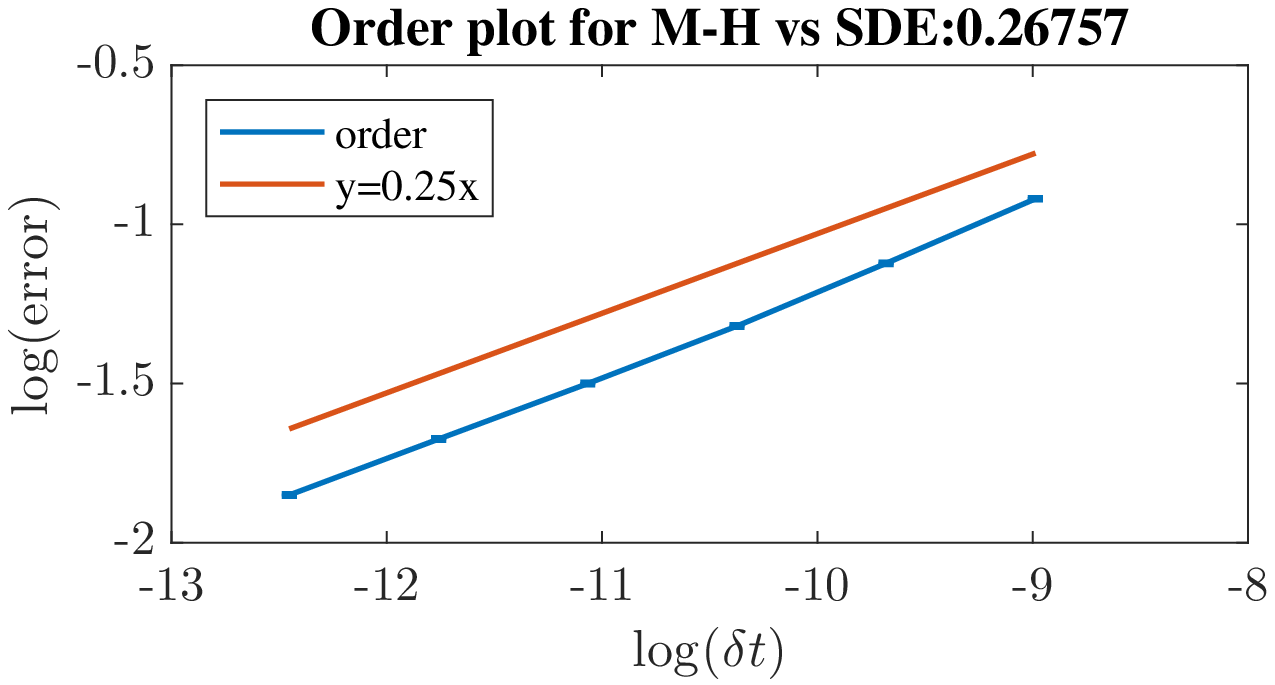}
		\caption{$\mathbb{T}^1 \to \mathbb{S}^2$}
		\label{fig:3dsde}
	\end{subfigure}
	
	\caption{Order of convergence for the error between M-H algorithm and Langevin equation with respect to time step size $\delta t = \frac{J\beta}{N^2} \varepsilon^2$ for $\beta = 1$. The order of convergence is approximately 0.25 as predicted in Theorem \ref{thm:1} with out of equilibrium initial condition. In both panels: lattice length $L=2$, space discretization $\delta x = \frac 1N = \frac{1}{10}$, time step size for M-H algorithm $\delta t=\frac{1}{N^3}$, inverse temperature $\beta = 1$, proposal size $\varepsilon = \sqrt{\frac{N\delta t}{\beta}}$.}
	\label{fig:order2}
\end{figure}

\begin{figure}[h]
	\centering
	\begin{subfigure}{0.475\textwidth}
		\includegraphics[width=\textwidth]{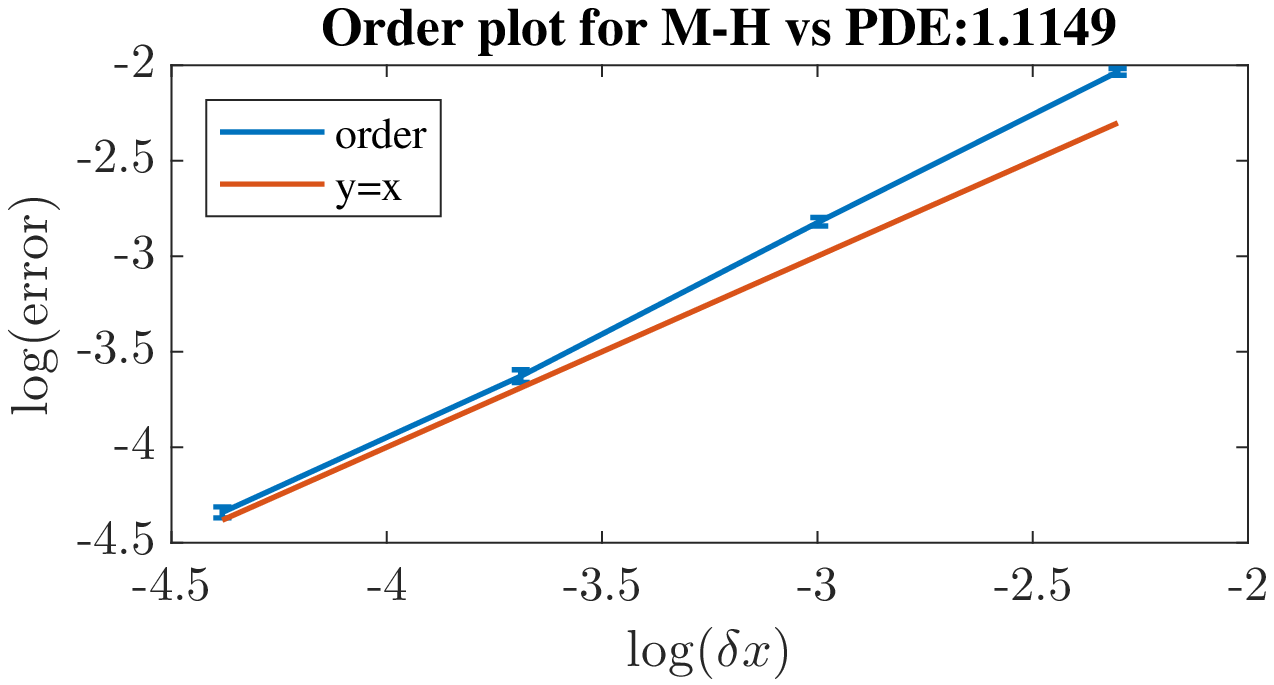}
		\caption{$\mathbb{T}^1 \to \mathbb{S}^1$}
		\label{fig:order3a}
	\end{subfigure}
	\hfill
	\begin{subfigure}{0.475\textwidth}
		\includegraphics[width=\textwidth]{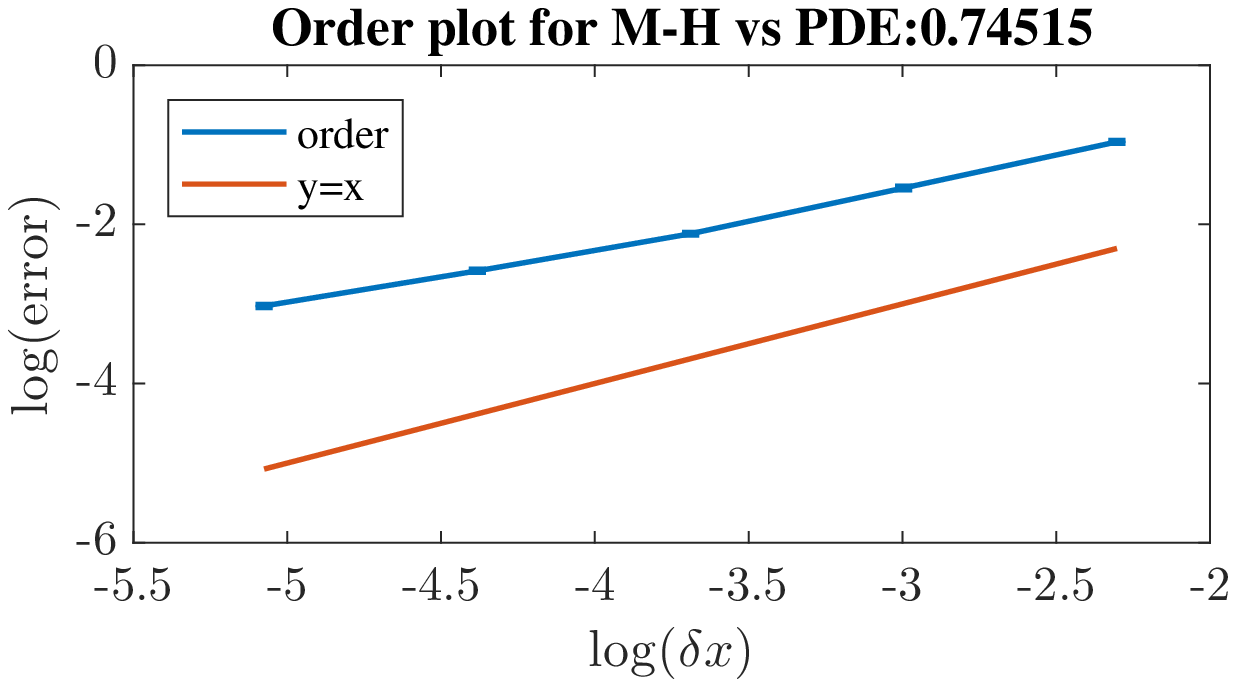}
		\caption{$\mathbb{T}^1 \to \mathbb{S}^2$}
		\label{fig:order3b}
	\end{subfigure}
	
	\caption{Order of convergence for the error between M-H algorithm and harmonic map heat flow equation with respect to lattice discretization size $\delta x = \frac{1}{N}$. From Figure \ref{fig:dynamics} the error between M-H algorithm and Langevin equation dominates over the error between Langevin equation and harmonic map heat flow equation. The order of convergence is expected to be $\delta t^{\frac 14} = \delta x$ since we choose $\delta t = \delta x^4$ and it is approximately the case in \ref{fig:order3a} and \ref{fig:order3b}. The analytical results in Theorems \ref{thm:1} and \ref{thm:2} does not give as good convergence rate and demands worse scaling of $\delta t,\beta$ as function of $N$. The following parameters tested in numerical experiments are enough: Lattice length $L=2$, initial space discretization $\delta x = \frac 1N = \frac{1}{10}$, time step size for the M-H algorithm $\delta t=\frac{1}{N^3}$, inverse temperature $\beta = 1$, proposal size $\varepsilon = \sqrt{\frac{N\delta t}{\beta}}$.}
	\label{fig:order3}
\end{figure}

Figure \ref{fig:order3} shows the convergence test for the error between the M-H and the PDE dynamics with respect to $\delta x = \frac 1N$.  The discrete version of the PDE is simulated with the time-step scaling of $\delta t = \frac{1}{N^4}$ and $\beta = N^{3/2}$, which are also used with $\varepsilon=\sqrt{\frac{N\delta t}{\beta}}$ in the M-H algorithm.  These scalings give 
the order of convergence to be approximately 1, better than our analytical result in Theorem \ref{thm:2}.  A possible explanation is that the error from Theorem \ref{thm:1} dominates.  As discussed above, Fig.~\ref{fig:dynamics} implies the error between the M-H algorithm and the Langevin SDE dominates over the error between the Langevin SDE and the harmonic map heat flow equation, thus we would expect error of $\delta t^{\frac 14}$ in Theorem \ref{thm:1} to dominate.  Since we choose the scaling of $\delta t = \frac{1}{N^4} = \delta x^4$ this order of convergence with respect to $\delta x = \frac 1N$ is expected to be $\delta t^{\frac 14} = \delta x$, or order one.  We also point out that the scalings of $\delta t = \frac{1}{N^4}$ and $\beta = N^{3/2}$ are better than the scalings one might guess from Theorem \ref{thm:1} ($\sqrt{\delta t}$ smaller than the order of $e^{-C_2}$ with $C_2$ an increasing function of $N$) and Remark \ref{remark:beta} ($\beta \gg N^7$). We suspect from the numerical experiments that the scalings of $\beta = N^{\frac 32}, \delta t= \frac{1}{N^4}$ are tight bounds resulting in order one convergence, but do not have a proof as of yet.

%
\section{Conclusion\label{sec:conclusions}}
%

We have shown that as the proposal size $\varepsilon\to 0$ in the Metropolis Hastings algorithm, the Metropolis dynamics converges to
the Langevin stochastic differential equation dynamics.  With proper scaling of $\beta = N^\gamma, \gamma>1$ and the number of particles $N \to \infty$, the SDE dynamics converges to the deterministic harmonic map heat flow dynamics.

Several future works are suggested by the results we have obtained.
First, the scaling confirmed by the numerical simulations suggest that even tighter bounds on the error can analytically be found.
 One thought to improve the scalings in the proofs is to try to divide the dynamics into two situations: near equilibrium and out of equilibrium. When the dynamics is out of equilibrium, the drift term $\textrm{P}_{\sigma_i}^\perp (\Delta_N \sigma_i)$ in the SDE \eqref{eq:SDE} dominates the behavior, driving down the energy.  Similarly, in the M-H dynamics, there is large probability of proposing a lower energy state, and therefore the proposal is often accepted.     In this sense, both dynamics are performing deterministic gradient descent.   On the other hand, when the dynamics are near equilibrium, the drift term $\textrm{P}_{\sigma_i}^\perp (\Delta_N \sigma_i)$ in SDE \eqref{eq:SDE}  is approximately zero and the system therefore fluctuates around equilibrium. The M-H dynamics with small proposal size would also stay in the neighborhood of the equilibrium state for a long time.  We hope by reconsidering the deterministic error out of equilibrium and a different approach exploiting in-equilibrium dynamics, a better scaling can be proven in future work.

Second, we would like to consider the Stochastic partial differential equation limit of the Metropolis-Hastings algorithm.  As in \cite{roberts1997weak,breyer2000metropolis,mattingly2012diffusion} this convergence result might imply the optimal scaling of proposal size $\varepsilon$ in the M-H algorithm.  We expect the correct scaling of the temperature is $\beta = N^\gamma$, $\gamma=0$, as formally, the noise term in the Euler step \eqref{eq:Euler step dt} of the SDE system \eqref{eq:Langevin} is of size $\sqrt{\delta t / \delta x}$ and converges to space-time white noise as $\delta t \to 0, N\to\infty$. 
  Note that the corresponding It\^o correction term in \eqref{eq:Euler step dt} would tend to infinity as $N\to\infty$.  This suggests seeking a regularization of the noise, such as using colored, spatially-correlated noise, particularly when considering more than one spatial dimension.  As in \cite{mattingly2012diffusion}, we suspect this addition of correlated noise to the M-H proposal will lead to a non-local drift term.  We plan to pursue these Stochastic limits in future work.

\appendix

\section{Drift and diffusion calculation} \label{Appendix:dirft and diffusion}

Here we state two simple inequalities that are used later. The first is
\[
\E[]{ |X+Y|^k } \le C_k ( \E[]{|X|^k} + \E[]{|Y|^k})
\]
for some constant $C_k$ as $ (X+Y)^k \le \mathbb{I}(|X| \le |Y|) 2^k |Y|^k + \mathbb{I}(|X| > |Y|) 2^k |X|^k$. Furthermore, this is also true for vectors
\[
\E[]{ \|X+Y \|^k } \le C_k ( \E[]{\|X\|^k} + \E[]{\|Y\|^k})
\]
since $ \E[]{ \|X+Y \|^k } = \E[]{ (\|X+Y\|^2)^{\frac k2}} \le \E[]{ (2\|X\|^2 + 2\|Y\|^2)^{\frac k2} } \le 2^{\frac k2} C_k ( \E[]{\|X\|^k} + \E{\|Y\|^k})$.

The second is H\"older inequality
\[
\E[]{XY} \le \E[]{|XY|} \le \E[]{|X|^p}^{\frac 1p} \E[]{|Y|^q}^{\frac 1q}
\]
with $ \frac 1p + \frac 1q =1$.

\subsection{Exponential map.}
In Section \ref{subsec:setup} we use the notation:
\[
\exp_{\sigma_i^n} (\varepsilon \nu_i^n) = \sigma_i^n + \varepsilon \nu_i^n + c_i^n = \sigma_i^n + \varepsilon \nu_i^n - \frac 12 \varepsilon^2 (\nu_i^n \cdot \nu_i^n) \sigma_i^n + d_i^n.
\]
Now we estimate $c_i^n,d_i^n$ as:
\[
\E[]{\|c_i^n\|^k} \le C_k \varepsilon^{2k}, \E[]{\|d_i^n\|^k} \le D_k \varepsilon^{3k}
\]
for any postive integer $k$ and some constants $C_k,D_k$ independent of $i,n$.

Notice
\begin{dmath*}
\exp_{\sigma_i^n} (\varepsilon \nu_i^n) = \frac{\sigma_i^n + \varepsilon \nu_i^n}{\| \sigma_i^n + \varepsilon \nu_i^n \|} +a_i^n = \frac{\sigma_i^n + \varepsilon \nu_i^n}{\sqrt{\| \sigma_i^n \|^2 + \varepsilon^2 \|\nu_i^n \|^2 - 2 \varepsilon (\sigma_i^n \cdot \nu_i^n) }} +a_i^n= \frac{\sigma_i^n + \varepsilon \nu_i^n}{ \sqrt{1 +\varepsilon^2 \|\nu_i^n \|^2  }} +a_i^n.
\end{dmath*}
Taylor expanding $\frac{1}{\sqrt{1 +\varepsilon^2 \|\nu_i^n \|^2  } }$, we have
\[
\exp_{\sigma_i^n} (\varepsilon \nu_i^n) = \left( \sigma_i^n + \varepsilon \nu_i^n \right) \left[  1 -
\frac{\varepsilon^2}{2} \| \nu_i^n \|^2 + \eta_i^n  \right] + a_i^n,
\]
where $\eta_i^n$ is the remainder of the Taylor expansion for $\frac{1}{\sqrt{1 +\varepsilon^2 \|\nu_i^n \|^2  } }$. Then,
\begin{gather}
d_i^n = -\frac{\varepsilon^3}{2} \|\nu_i^n\|^2 \nu_i^n + (\sigma_i^n + \varepsilon \nu_i^n) \eta_i^n  + a_i^n, \label{eq:d} \\
c_i^n = -\frac{\varepsilon^2}{2} \| \nu_i^n \|^2 \sigma_i^n + d_i^n + a_i^n.  \label{eq:c}
\end{gather}

Let us first deal with the term $a_i^n$, since the geodesic on the unit sphere is the great circle,
$\exp_{\sigma_i^n} (\varepsilon \nu_i^n)$, $\frac{\sigma_i^n + \varepsilon \nu_i^n}{\| \sigma_i^n + \varepsilon \nu_i^n \|} $ are on the same great circle. The arc length of geodesic is $\|\varepsilon \nu_i^n \|$, and the arc length of $\frac{\sigma_i^n + \varepsilon \nu_i^n}{\| \sigma_i^n + \varepsilon \nu_i^n \|}$ is $\arctan \left( \| \varepsilon \nu_i^n \|\right)$. The vector $a_i^n$ is the straight line connecting the two points of the difference between these two arcs, and is bounded by the difference of arc lengths:
\[
\| a_i^n \| \le \| \varepsilon \nu_i^n \| - \arctan \left( \| \varepsilon \nu_i^n \| \right).
\]

Taylor expanding for $\arctan x$,
\[
\arctan x = x - \frac{x^3}{3} + r, r = \frac{f^{(4)}(\xi)}{4!} x^4, f^{(4)}(x) = \frac{24x(1-x^2)}{(1+x^2)^4},
\]
and when $x = \| \varepsilon \nu_i^n \| \ge 0, |f^{(4)}(x)| \le | \frac{24x}{(1+x^2)} \frac{1-x^2}{1+x^2} \frac{1}{(1+x^2)^2}| \le 24$, $|r| \le x^4$. Hence, from $ x - \arctan x = \frac{x^3}{3} - r$,
\[
\| a_i^n \| \le \| \varepsilon \nu_i^n \| - \arctan \left( \| \varepsilon \nu_i^n \| \right)
\le \frac{\| \varepsilon \nu_i^n \|^3}{3} + \| \varepsilon \nu_i^n \|^4,
\]
so
\[
\E[]{ \| a_i^n \|^k } \le C \E{\| \varepsilon \nu_i^n \|^{3k} } \le C \varepsilon^{3k}
\]

In the Taylor expansion for $f(x) = \frac{1}{\sqrt{1+x}}, x \ge 0$, the remainder $ r = f(x) - (1 -\frac x2)$ is given by
\[
r = \frac{f''(\xi)}{2} x^2 = \frac 38  (1 + \xi)^{-\frac 52} x^2,  \xi \in [0,\infty)
\]
and $|r| \le \frac 38 x^2$. Applying the above estimates for $r=\eta_i^n$ with $x=\varepsilon^2 \|\nu_i^n \|^2 \ge 0$, we observe
\[
| \eta_i^n |  \le \frac 38 \varepsilon^4 \| \nu_i^n \|^4.
\]

Now we could get the bound for the terms $c_i^n,d_i^n$.
The first term for $d_i^n$ in \eqref{eq:d} is bounded by
\[
\E[]{\left\|\frac{\varepsilon^3}{2} \|\nu_i^n\|^2 \nu_i^n\right\|^k } \le C \E[]{\varepsilon^{3k} \|\nu_i^n\|^{3k} } \le C \varepsilon^{3k}.
\]
The second term in \eqref{eq:d} gives
\[
\E[]{\|(\sigma_i^n + \varepsilon \nu_i^n) \eta_i^n\|^k} \le
\E{ \| \sigma_i^n + \varepsilon \nu_i^n \|^{2k} }^{\frac 12} \E{|\eta_i^n|^{2k} }^{\frac 12} \le c \varepsilon^{4k}
\]
by H\"older's inequality. This is because the first term in the right hand side is bounded by
\[
\E[]{ \| \sigma_i^n + \varepsilon \nu_i^n \|^{2k} } \le C_1 \left( \E[]{\|\sigma_i^n\|^{2k}} + \varepsilon^{2k} \E[]{\| \nu_i^n \|^{2k} } \right) \le C_1 + C_2 \varepsilon^{2k}
\]
and the second term in the right hand side is bounded by
\[
\E[]{|\eta_i^n|^{2k} } \le C \E[]{\varepsilon^{8k} \|\nu_i^n\|^{8k} } \le C \varepsilon^{8k}.
\]

The first term for $c_i^n$ in \eqref{eq:c}
\[
\E[]{\left\|\frac{\varepsilon^2}{2} \| \nu_i^n \|^2 \sigma_i^n \right\|^k} \le C \varepsilon^{2k} \E[]{ \|\nu_i^n\|^{2k} \|\sigma_i^n\|^k  } \le C \varepsilon^{2k}.
\]

The bound for $\E[]{\|c_i^n\|^k},\E[]{\|d_i^n\|^k}$ are found using the inequalities
\[
\E[]{ \|X+Y \|^k } \le C_k ( \E[]{\|X\|^k} + \E[]{\|Y\|^k})
\]
with the above bounds for the terms in $c_i^n,d_i^n$.

\subsection{Drift.}

Now we give the error estimation for the drift calculation. Denote
\[
\theta_i^n \equiv \E{\sigma_i^{n+1} - \sigma_i^n } - \left(-\frac 12 \beta \varepsilon^2 \textrm{P}_{\sigma^n_i}^{\perp} \left( \frac{\partial H}{\partial \sigma^n_i} \right) - \varepsilon^2 \sigma^n_i \right).
\]
We will show
\[
\E[]{ \|\theta_i^n\|^2 } \le C \varepsilon^6.
\]

For simplicity we write $\theta_i^n = \sum_k \theta_k$, where $\theta_k$ denotes the error for each step of the drift calculation in Section \ref{subsec:drift}. And we will show each $\E[]{\|\theta_k \|^2} \le C \varepsilon^6$.

Notice $\theta_k$ are conditional expectations in the form of $\theta_k = \E{\bf{X}}$. Since $f(x)=x^2$ is a convex function, we have $(\E{\bf{X}})^2 \le \E{\|\bf{X}\|^2}$, hence
\[
 \E[]{\|\theta_k\|^2} = \E[]{(\E{\bf{X}})^2} \le \E[]{\E{\|\bf{X}\|^2}} = \E[]{\| \bf{X} \|^2}.
\]
We will use this to bound $\E[]{\|\theta_k\|^2}$.

In the drift calculation, we first take the approximation $\E{\sigma_i^{n+1} - \sigma_i^n } \approx \E{ \left( \varepsilon \nu_i^n - \frac{\varepsilon^2}{2} ( \nu_i^n \cdot \nu_i^n) \sigma_i^n  \right)  \left( 1 \wedge  e^{-\beta \delta H} \right) }$. Denote the difference of them as
\[
\theta_1 \equiv \E{\sigma_i^{n+1} - \sigma_i^n } - \E{ \left( \varepsilon \nu_i^n - \frac{\varepsilon^2}{2} ( \nu_i^n \cdot \nu_i^n) \sigma_i^n  \right)  \left( 1 \wedge  e^{-\beta \delta H} \right) } = \E{d_i^n \left( 1 \wedge  e^{-\beta \delta H} \right)},
\]
then by H\"older's inequality
\[
  \E[]{\|\theta_1 \|^2} \le \E[]{ \left\|d_i^n \left( 1 \wedge  e^{-\beta \delta H} \right) \right \|^2} \le C \E[]{\|d_i^n \|^4 }^{\frac 12}   \E[]{ \left( 1 \wedge  e^{-\beta \delta H} \right)^4}^{\frac 12} \le C \varepsilon^6.
\]

For the second term in drift $ \E{\left(- \frac{\varepsilon^2}{2} ( \nu_i^n \cdot \nu_i^n) \sigma_i^n  \right)  \left( 1 \wedge  e^{-\beta \delta H} \right) } \approx \E{ \left(- \frac{\varepsilon^2}{2}
( \nu_i^n \cdot \nu_i^n) \sigma_i^n  \right) }$, denote the error term as
\begin{dmath*}
\theta_2 \equiv \E{\left(- \frac{\varepsilon^2}{2}
( \nu_i^n \cdot \nu_i^n) \sigma_i^n  \right)  \left( 1 \wedge  e^{-\beta \delta H} \right) } - \E{\left(- \frac{\varepsilon^2}{2}
( \nu_i^n \cdot \nu_i^n) \sigma_i^n  \right)}
= \E{\left(- \frac{\varepsilon^2}{2}
( \nu_i^n \cdot \nu_i^n) \sigma_i^n  \right)  \left( 1 \wedge  e^{-\beta \delta H} - 1 \right) }.
\end{dmath*}
We have $ \left| \left( 1 \wedge e^{x} \right) - 1 \right| \le | x|$, and
$ | \delta H| \le C \sum_j \left( \| \varepsilon \nu_j^n \| + \| \varepsilon \nu_j^n \|^2 + \|c_j^n\| \right)$
so by H\"older inequality
\begin{dmath*}
\E[]{\| \theta_2 \|^2} \le \E[]{ \left\|- \frac{\varepsilon^2}{2} ( \nu_i^n \cdot \nu_i^n) \sigma_i^n  \right\|^4 }^{\frac 12} \E[]{\left( 1 \wedge  e^{-\beta \delta H} - 1 \right)^4 }^{\frac 12} \\ 
\le C \E[]{ \varepsilon^8 \|\nu_i^n\|^8 \| \sigma_i^n\|^4 }^{\frac 12} \E[]{ |\beta \delta H|^4 }^{\frac 12} \le C \beta^2 \varepsilon^6.
\end{dmath*}

Next we replace $\delta H$ by $\varepsilon \frac{\partial H}{\partial \sigma_i^n} \cdot \nu_i^n + R_i^n$ in the drift calculation. Define
\[
\theta_3 \equiv \E{\varepsilon \nu_i^n \left( 1 \wedge e^{-\beta \delta H} \right)} - \E{\varepsilon \nu_i^n \left( 1 \wedge e^{-\beta \left( \varepsilon \frac{\partial H}{\partial \sigma_i^n} \cdot \nu_i^n + R_i^n \right)} \right)}.
\]
Notice that $\delta H = \varepsilon \frac{\partial H}{\partial \sigma_i^n} \cdot \nu_i^n + R_i^n + h_i^n $ and $| (1\wedge e^{x+\delta x}) - (1 \wedge
x)| \le |\delta x|$, hence
\[
\E[]{\|\theta_3\|^2} \le \E[]{\|\varepsilon \nu_i^n\|^4 }^{\frac 12} \E[]{ (\beta h_i^n)^4 }^{\frac 12}.
\]
Since
$|h_i^n| \le C \sum_j \left( \| \varepsilon \nu_j^n \|^2 + \|c_j^n\| \right)$, we have $\E[]{(h_i^n)^4} \le C \varepsilon^8$ and
\[
\E[]{\|\theta_3\|^2} \le \E[]{\|\varepsilon \nu_i^n\|^4 }^{\frac 12} \E[]{ (h_i^n)^4 }^{\frac 12} \le C \beta^2 \varepsilon^6.
\]

Then we write the drift term as in \eqref{eq:drift1}, and have the first term given by
\begin{align*}
& \E[]{\varepsilon r_1 b_1 \left(1\wedge e^{-\beta \left( \varepsilon r_1 \frac{\partial H}{\partial \sigma_i^n} \cdot b_1 + \varepsilon r_2 \frac{\partial H}{\partial \sigma_i^n} \cdot b_2 + R_i^n \right) }\right) \Big | R_i^n, r_2 }\\
 &=   -\beta \varepsilon^2 \left( \frac{\partial H}{\partial \sigma_i^n} \cdot b_1 \right) b_1 e^{\frac{\left( \beta \varepsilon \frac{\partial H}{\partial \sigma_i^n} \cdot b_1 \right)^2}{2} + \beta \varepsilon r_2 \frac{\partial H}{\partial \sigma_i^n} \cdot b_2 + \beta R_i^n } \Phi \left( \frac{\varepsilon r_2 \frac{\partial H}{\partial \sigma_i^n} \cdot b_2 + R_i^n}  {\left| \varepsilon \frac{\partial H}{\partial \sigma_i^n} \cdot b_1 \right| } - \left| \beta \varepsilon \frac{\partial H}{\partial \sigma_i^n} \cdot b_1 \right| \right).
\end{align*}
We approximate $e^{\frac{\left( \beta \varepsilon \frac{\partial H}{\partial \sigma_i^n} \cdot b_1 \right)^2}{2} + \beta \varepsilon r_2 \frac{\partial H}{\partial \sigma_i^n} \cdot b_2 + \beta R_i^n }$ by $1$ in the above equation and bound the following term
\begin{align*}
\theta_4 
\equiv & \mathbb{E} \left[ -\beta \varepsilon^2 \left( \frac{\partial H}{\partial \sigma_i^n} \cdot b_1 \right) b_1
e^{\frac{\left( \beta \varepsilon \frac{\partial H}{\partial \sigma_i^n} \cdot b_1 \right)^2}{2} +\beta \varepsilon r_2 \frac{\partial H}{\partial \sigma_i^n} \cdot b_2 + \beta R_i^n }  \right. \\
& \qquad \left. \times \Phi \left( \frac{\varepsilon r_2 \frac{\partial H}{\partial \sigma_i^n} \cdot b_2 + R_i^n} {\left| \varepsilon \frac{\partial H}{\partial \sigma_i^n} \cdot b_1 \right| } - \left| \beta \varepsilon \frac{\partial H}{\partial \sigma_i^n} \cdot b_1 \right| \right) \right]  \\
& \quad - \E{-\beta \varepsilon^2 \left( \frac{\partial H}{\partial \sigma_i^n} \cdot b_1 \right) b_1 \Phi \left( \frac{\varepsilon r_2 \frac{\partial H}{\partial \sigma_i^n} \cdot b_2 + R_i^n} {\left| \varepsilon \frac{\partial H}{\partial \sigma_i^n} \cdot b_1 \right| } - \left| \beta \varepsilon \frac{\partial H}{\partial \sigma_i^n} \cdot b_1 \right| \right)}
\end{align*}
by 
\[ \E{\| \theta_4 \|^2} \le C \varepsilon^6. \]

For $ z \sim \N(\mu,\sigma^2)$, we use
\begin{equation} \label{eq:ezk}
\E[]{\left| e^z -1 \right|^k} = \E[]{\left| e^z -1 \right|^k \mathbbm{1}(z \le 2) }+
\E[]{\left| e^z -1 \right|^k \mathbbm{1}(z > 2)}.
\end{equation}
For the first term in \eqref{eq:ezk}, since $ |e^z -1| \le e^2 z$ for $z \le 2$, we have 
\[ \E[]{\left| e^z -1 \right|^k \mathbbm{1}(z \le 2)| } \le e^{2k} \E[]{ z^k}. \] When $k=4$, $\E[]{ z^k}= \mu^4 + 6 \mu^2 \sigma^2 + 3\sigma^4$.

For the second term in \eqref{eq:ezk}, $\left| e^z -1 \right|^k \le e^{kz}$ when $z >2$, suppose $2+\mu+k \sigma^2 \ge 1$ we have
\begin{align*}
&\E{e^{kz} \mathbbm{1}(z>2)} 
= \int_2^\infty \frac{1}{\sqrt{2\pi \sigma^2}} e^{kz} e^{ -\frac{(z-\mu)^2}{2\sigma^2} } \df z \\
&\hspace{.5cm} = \int_2^\infty \frac{1}{\sqrt{2\pi \sigma^2}} e^{k\mu + \frac{k^2 \sigma^2}{2}} e^{ -\frac{(z-\mu-k\sigma^2)^2}{2\sigma^2}} \df z 
= e^{k\mu + \frac{k^2 \sigma^2}{2}} \int_{2+\mu+k \sigma^2}^{\infty} \frac{1}{\sqrt{2\pi \sigma^2}}  e^{-\frac{x^2}{2\sigma^2}} \df x \\
& \hspace{.5cm}\le  e^{k\mu + \frac{k^2 \sigma^2}{2}} \int_{2+\mu+k \sigma^2}^{\infty} \frac{1}{\sqrt{2\pi \sigma^2}}  e^{-\frac{x}{2\sigma^2}} \df x 
=  e^{k\mu + \frac{k^2 \sigma^2}{2}} \sqrt{\frac{2}{\pi}} \sigma e^{-\frac{2 + \mu + k \sigma^2}{2\sigma^2}} \\
&  \hspace{.5cm}\le  e^{k\mu + \frac{k^2 \sigma^2}{2}} \sqrt{\frac{2}{\pi}} \sigma e^{-\frac{1}{2\sigma^2}}.
\end{align*}

Notice that $\frac{\left( \beta \varepsilon \frac{\partial H}{\partial \sigma_i^n} \cdot b_1 \right)^2}{2} +
\beta \varepsilon r_2 \frac{\partial H}{\partial \sigma_i^n} \cdot b_2 + \beta R_i^n \sim \N(c_1 \varepsilon^2,c_2 \varepsilon^2)$, when $\varepsilon$ is small $2+\mu+k \sigma^2 \ge 1$, hence we get
\[
\E[]{ \left( e^{ \frac{\left( \beta \varepsilon \frac{\partial H}{\partial \sigma_i^n} \cdot b_1 \right)^2}{2} +
\beta \varepsilon r_2 \frac{\partial H}{\partial \sigma_i^n} \cdot b_2 + \beta R_i^n} -1 \right)^4 } \le
e^8 c_2^2 \varepsilon^4 + \sqrt{\frac{2 c_2}{\pi}} \varepsilon e^{-\frac{1}{2c_2 \varepsilon^2}} \le C \varepsilon^4
\]
as the term $e^{-\frac{1}{2c_2 \varepsilon^2}}$ decays faster than any polynomial of $\varepsilon$ as $\varepsilon \to 0$.

So we have
\begin{align*}
  \E[]{ \| \theta_4 \|^2} 
  =&   \E[]{ \left\| -\beta \varepsilon^2 \left( \frac{\partial H}{\partial \sigma_i^n} \cdot b_1 \right) b_1 \Phi \left( \frac{\varepsilon r_2 \frac{\partial H}{\partial \sigma_i^n} \cdot b_2 + R_i^n} {\left| \varepsilon \frac{\partial H}{\partial \sigma_i^n} \cdot b_1 \right| } - \left| \beta \varepsilon \frac{\partial H}{\partial \sigma_i^n} \cdot b_1 \right| \right) \right\|^4}^{\frac 12} \\
   & \quad \times \E[]{ \left| e^{ \frac{\left( \beta \varepsilon \frac{\partial H}{\partial \sigma_i^n} \cdot b_1 \right)^2}{2} + \beta \varepsilon r_2 \frac{\partial H}{\partial \sigma_i^n} \cdot b_2 + \beta R_i^n} -1 \right|^4 }^{\frac 12} \\
   \le &  C \varepsilon^4 \varepsilon^2.
\end{align*}

Then we approximate $ \Phi \left( \frac{\varepsilon r_2 \frac{\partial H}{\partial \sigma_i^n} \cdot b_2 + R_i^n}
{\left| \varepsilon \frac{\partial H}{\partial \sigma_i^n} \cdot b_1 \right| }
- \left| \beta \varepsilon \frac{\partial H}{\partial \sigma_i^n} \cdot b_1 \right|\right)$ by
$\Phi \left( \frac{\varepsilon r_2 \frac{\partial H}{\partial \sigma_i^n} \cdot b_2 + R_i^n}
{\left| \varepsilon \frac{\partial H}{\partial \sigma_i^n} \cdot b_1 \right| }  \right)$ in the calculation. Denote
\begin{gather*}
\theta_5 \equiv \E[]{-\beta \varepsilon^2 \left( \frac{\partial H}{\partial \sigma_i^n} \cdot b_1 \right) b_1 \Phi \left( \frac{\varepsilon r_2 \frac{\partial H}{\partial \sigma_i^n} \cdot b_2 + R_i^n} {\left| \varepsilon \frac{\partial H}{\partial \sigma_i^n} \cdot b_1 \right| } - \left| \beta \varepsilon \frac{\partial H}{\partial \sigma_i^n} \cdot b_1 \right| \right)} \\
- \E{-\beta \varepsilon^2 \left( \frac{\partial H}{\partial \sigma_i^n} \cdot b_1 \right) b_1
\Phi \left( \frac{\varepsilon r_2 \frac{\partial H}{\partial \sigma_i^n} \cdot b_2 + R_i^n}
{\left| \varepsilon \frac{\partial H}{\partial \sigma_i^n} \cdot b_1 \right| }  \right)}.
\end{gather*}
From $ \left| \Phi(x+\delta x) - \Phi(x) \right| = \left| \Phi'(\xi) \delta x \right| = \left| e^{-\frac{(\xi-\mu)^2}{2\sigma^2}} \delta x \right| \le |\delta x| $, we have
\begin{align*}
& \E[]{ \theta_5^2} 
= \E[]{ \left\|-\beta \varepsilon^2 \left( \frac{\partial H}{\partial \sigma_i^n} \cdot b_1 \right) b_1  \right\|^4 }^{\frac 12} \\
  & \quad \times \E[]{ \left| \Phi \left( \frac{\varepsilon r_2 \frac{\partial H}{\partial \sigma_i^n} \cdot b_2 + R_i^n} {\left| \varepsilon \frac{\partial H}{\partial \sigma_i^n} \cdot b_1 \right| } - \left| \beta \varepsilon \frac{\partial H}{\partial \sigma_i^n} \cdot b_1 \right| \right)  - \Phi \left( \frac{\varepsilon r_2 \frac{\partial H}{\partial \sigma_i^n} \cdot b_2 + R_i^n} {\left| \varepsilon \frac{\partial H}{\partial \sigma_i^n} \cdot b_1 \right| }  \right) \right|}^{\frac 12} \\
& \hspace{1cm} \le  C \varepsilon^4 \varepsilon^2.
\end{align*}

Similarly we would get the bound for
\[
\E[]{\varepsilon  r_2 b_2 \left(1\wedge e^{-\beta \left( \varepsilon r_1 \frac{\partial H}{\partial \sigma_i^n} \cdot b_1 + \varepsilon r_2 \frac{\partial H}{\partial \sigma_i^n} \cdot b_2 + R_i^n \right) }\right) }
\]
and see every $\E[]{\| \theta_i \|^2}$ is bounded by $C \varepsilon^6$ so $ \E[]{\|\theta_i^n\|^2} \le C \varepsilon^6$.

\subsection{Diffusion.} \label{Appendix:diffusion}
Here we are show that $\E[]{\|\phi_i^n\|^2} = O(\varepsilon^3)$. In
\begin{dmath*}
  \E[]{ \| \phi_i^n \|^2}  = \E[]{\left\| c_i^n - \E{\sigma_i^{n+1}- \sigma_i^n} \right\|^2 \left( 1 \wedge e^{-\beta \delta H}\right) } +  \E[]{ \left\|-\varepsilon \nu_i^n -\E{\sigma_i^{n+1} - \sigma_i^n}\right\|^2 \left(1- \left( 1 \wedge e^{-\beta \delta H}\right)\right) }
\end{dmath*}
every term is order $\varepsilon^4$ except for $\E[]{ \varepsilon^2 \|\nu_i^n\|^2 \left(1- \left( 1 \wedge e^{-\beta \delta H}\right)\right) }$. We show
\[ \E[]{ \|\nu_i^n\|^2 \left( 1 \wedge e^{-\beta \delta H}\right)} = \E[]{ \|\nu_i^n\|^2 } + O(\varepsilon). \]

Indeed, since $ | (1 \wedge e^x) - (1 \wedge (1+x)) | \le x^2 $,
\begin{align*}
 & \E[]{ \| \nu_i^n \|^2  \left( 1 \wedge e^{-\beta \left(  \varepsilon \frac{\partial H}{\partial \sigma_i^n} \cdot \nu_i^n + R_i^n \right)} \right)} \\
& \hspace{.5cm} \approx  \E[]{ \| \nu_i^n \|^2  \left( 1 \wedge \left(1-\beta \left(  \varepsilon \frac{\partial H}{\partial \sigma_i^n} \cdot \nu_i^n + R_i^n \right) \right) \right)} + O(\varepsilon^2) \\
 &  \hspace{.5cm}=  \E[]{ \|\nu_i^n\|^2 \I{ \beta \left(  \varepsilon \frac{\partial H}{\partial \sigma_i^n} \cdot \nu_i^n + R_i^n \right) <0} } \\
 & \hspace{1.1cm} + \E[]{ \|\nu_i^n\|^2 \left( 1 - \beta \left(  \varepsilon \frac{\partial H}{\partial \sigma_i^n} \cdot \nu_i^n + R_i^n \right) \right)  \I{ \beta \left(  \varepsilon \frac{\partial H}{\partial \sigma_i^n} \cdot \nu_i^n + R_i^n \right) >0} } \\
& \hspace{0.5cm} =  \E[]{\|\nu_i^n\|^2} - \E[]{ \|\nu_i^n\|^2 \left( \beta \left(  \varepsilon \frac{\partial H}{\partial \sigma_i^n} \cdot \nu_i^n + R_i^n \right) \right) \I{ \beta \left(  \varepsilon \frac{\partial H}{\partial \sigma_i^n} \cdot \nu_i^n + R_i^n \right) >0} }.
\end{align*}

Then it remains to show $\E[]{ \|\nu_i^n\|^2 \left( \beta \left(  \varepsilon \frac{\partial H}{\partial \sigma_i^n} \cdot \nu_i^n + R_i^n \right) \right) \I{ \beta \left(  \varepsilon \frac{\partial H}{\partial \sigma_i^n} \cdot \nu_i^n + R_i^n \right) >0} }$ is approximately
an $O(\varepsilon)$ term. As before take $\nu_i^n = r_1 b_1 + r_2 b_2$ and we will only take care of $r_1$, since the calculation for $r_2$ is similar. Denote $R \equiv R_i^n + \varepsilon \frac{\partial H}{\partial \sigma_i^n} \cdot b_2 r_2 \sim N(0,\varepsilon^2 c_1^2), c_1 = O(1)$. The expectation of the $r_1$ part is
\[
 -\beta \E[]{ r_1^2 \left( R + \varepsilon \frac{\partial H}{\partial \sigma_i^n} \cdot b_1 r_1\right)  \I{ \beta \left(  \varepsilon \frac{\partial H}{\partial \sigma_i^n} \cdot b_1 r_1 + R \right) >0}}.
\]
To calculate, first condition on $R$ and compute it over $r_1$. This involves expectations in forms of 
\[
\E[]{ z^2 \I{(az+b)>0)}}, \text{and} \ \E[]{z^3 \I{(az+b)>0} }.
\]  
For $z \sim N(0,1)$, a direct calculation gives
\begin{equation} \label{eq:Ez2}
\begin{aligned}
 \E[]{z^2 \I{(az+b)>0} } &= \frac{-b}{\sqrt{2\pi} |a|} e^{-\frac{b^2}{2a^2}} + \Phi \left( \frac{b}{|a|} \right), \\
 \E[]{z^3 \I{(az+b)>0}} &= \frac{1}{\sqrt{2\pi}} \left( 2 + \frac{b^2}{a^2} \right) e^{-\frac{b^2}{2a^2}} \textrm{sign}(a).
\end{aligned}
\end{equation}
Using the tower property, we have
\begin{dmath} \label{eq:Er1}
 \E[]{ r_1^2 \left( R + \varepsilon \frac{\partial H}{\partial \sigma_i^n} \cdot b_1 r_1\right)  \I{ \beta \left(  \varepsilon \frac{\partial H}{\partial \sigma_i^n} \cdot b_1 r_1 + R \right) >0}} = \E[]{\E{ \left. r_1^2 R \I{ \beta \left(  \varepsilon \frac{\partial H}{\partial \sigma_i^n} \cdot b_1 r_1 + R \right) >0} \right| R}}  + \E[]{\E{ \left. \varepsilon \frac{\partial H}{\partial \sigma_i^n} \cdot b_1 (r_1)^3 \I{ \beta \left(  \varepsilon \frac{\partial H}{\partial \sigma_i^n} \cdot b_1 r_1 + R \right) >0} \right| R}}.
\end{dmath}

Denote $c_2 =\frac{\partial H}{\partial \sigma_i^n} \cdot b_1$, for the first term in \eqref{eq:Er1}, the first formula in \eqref{eq:Ez2} gives
\begin{equation} \label{eq:Er2}
\E{ \left. r_1^2 R \I{ \beta \left(  \varepsilon \frac{\partial H}{\partial \sigma_i^n} \cdot b_1 r_1 + R \right) >0} \right| R} = \E{-\frac{R^2}{\sqrt{2\pi} |\varepsilon c_2|} e^{-\frac{R^2}{2 \varepsilon^2 c_2^2}} + R \Phi \left( \frac{R}{|\varepsilon c_2|} \right) }.
\end{equation}
For the second term in \eqref{eq:Er1}, the second formula in \eqref{eq:Ez2} gives
\begin{dmath} \label{eq:Er3}
 \E{ \left. \varepsilon \frac{\partial H}{\partial \sigma_i^n} \cdot b_1 (r_1)^3 \I{ \beta \left(  \varepsilon \frac{\partial H}{\partial \sigma_i^n} \cdot b_1 r_1 + R \right) >0} \right| R} =
 \E{ \varepsilon c_2 \frac{1}{\sqrt{2\pi}} \left( 2 + \frac{R^2}{\varepsilon^2 c_2^2} \right) e^{-\frac{R^2}{2\varepsilon^2 c_2^2}} \textrm{sign}(c_2) }
 = \E{\frac{2 \varepsilon |c_2|}{\sqrt{2\pi}} e^{-\frac{R^2}{2\varepsilon^2 c_2^2}} + \frac{R^2}{ \sqrt{2\pi} \varepsilon |c_2|} e^{-\frac{R^2}{2\varepsilon^2 c_2^2}} }.
\end{dmath}

Combining \eqref{eq:Er2} and \eqref{eq:Er3}, we have
\begin{dmath*}
\E{ \left. r_1^2 \left( R + \varepsilon \frac{\partial H}{\partial \sigma_i^n} \cdot b_1 r_1\right) \I{ \beta \left(  \varepsilon \frac{\partial H}{\partial \sigma_i^n} \cdot b_1 r_1 + R \right) >0} \right| R} = \E{ R \Phi \left( \frac{R}{|\varepsilon c_2|} \right)} + \E{\frac{2 \varepsilon |c_2|}{\sqrt{2\pi}} e^{-\frac{R^2}{2\varepsilon^2 c_2^2}} }.
\end{dmath*}
Since $R \left( \Phi \left( \frac{R}{|\varepsilon c_2|} \right) - \frac 12 \right) \ge 0$, the first term $\E{ R \Phi \left( \frac{R}{|\varepsilon c_2|} \right)} = \E{ R \left( \Phi \left( \frac{R}{|\varepsilon c_2|} \right) - \frac 12 \right) + \frac 12 R} \ge 0$. In the second term $\frac{R}{\varepsilon c_2} \sim N(0,\frac{c_1^2}{c_2^2})$ and 
\[ \E{e^{-\frac{R^2}{2\varepsilon^2 c_2^2}} } = \frac{c_2}{\sqrt{c_1^2+c_2^2}} \sim O(1) \]
after a direct calculation. This shows 
\[ -\beta \E{ r_1^2 \left( R + \varepsilon \frac{\partial H}{\partial \sigma_i^n} \cdot b_1 r_1\right) \I{ \beta \left(  \varepsilon \frac{\partial H}{\partial \sigma_i^n} \cdot b_1 r_1 + R \right) >0}} \sim O(\varepsilon) \]
and the $r_2$ part follows similarly.

Thus we conclude $\E[]{ \|\nu_i^n\|^2 \left( \beta \left(  \varepsilon \frac{\partial H}{\partial \sigma_i^n} \cdot \nu_i^n + R_i^n \right) \right) \I{ \beta \left(  \varepsilon \frac{\partial H}{\partial \sigma_i^n} \cdot \nu_i^n + R_i^n \right) >0} }$ is an $\varepsilon$ term.

\section{Quadratic variation}

For an n dimensional process $X$
\[
\df X= \mu \df t + \sigma \df W.
\]
It\^o's chain rule for a function $f(X)$ is
\begin{equation} \label{6}
  \df f(X_1, X_2, \ldots, X_n) = \sum_i \frac{\partial f}{\partial X_i} \df X_i + \frac 12 \sum_i \sum_j \frac{\partial^2 f}{\partial X_i \partial X_j}  \df [X_i,X_j](t),
\end{equation}
where
\[
\df [X_i,X_j] (t) = a_{ij} \df t, \quad a = \sigma \sigma^T.
\]

For $H = J \sum_{<i,j>} \| \sigma_i - \sigma_j \|^2 $, denote $i,j$ as $i,j$-th spin and $\alpha,\beta=x,y,z$ for the coordinates of the spin
\[
\df H = \sum_{i,\alpha} \frac{\partial H}{\partial \sigma_i^\alpha} \df \sigma_i^\alpha + \frac 12 \sum_{i,j,\alpha,\beta} \frac{\partial^2 H}{\partial \sigma_i^\alpha \partial \sigma_j^\beta} \df [ \sigma_i^\alpha, \sigma_j^\beta ]
\]
where
\[
\frac{\partial H}{\partial \sigma_i^x} = - \frac{2J}{N^2} \triangle_N \sigma_i^x, \quad
\frac{\partial^2 H}{\partial (\sigma_i^x)^2} = \begin{cases}
  4J & j=i \\
  -2J & j = i \pm 1 \\
  0 & \textrm{otherwise}
\end{cases}
\]
and $ \frac{\partial^2 H}{\partial \sigma_i^x \partial \sigma_i^y} = 0$. The results for $y,z$ are similar.

The noise term in SDE is $\textrm{P}_{\sigma_i}^{\perp}(\df W_i) = \df W_i - (\df W_i,\sigma_i) \sigma_i$. For $i \ne j$, $W_i,W_j$ are independent and then $\df [\sigma_i^\alpha,\sigma_j^\beta]=0$, so only $\df [\sigma_i^x,\sigma_i^x]$ need to be calculated. Since $W_i^x,W_i^y,W_i^z$ are also independent, the quadratic variation is calculated by summing up the coefficients before each $\df W_i^\alpha$
\begin{equation*}
  \begin{aligned}
    \df [\sigma_i^x,\sigma_i^x] = \epsilon^2 \left[ \left(1 - (\sigma_i^x)^2 \right)^2 + (\sigma_i^x \sigma_i^y)^2 + (\sigma_i^x \sigma_i^z)^2 \right] \df t = \epsilon^2 \left[ 1 - (\sigma_i^x)^2 \right] \df t, \\
    \df [\sigma_i^y,\sigma_i^y] = \epsilon^2 \left[ \left(1 - (\sigma_i^y)^2 \right)^2 + (\sigma_i^x \sigma_i^y)^2  + (\sigma_i^y \sigma_i^z)^2 \right] \df t= \epsilon^2 \left[ 1 - (\sigma_i^y)^2 \right] \df t, \\
    \df [\sigma_i^z,\sigma_i^z] = \epsilon^2 \left[ \left(1 - (\sigma_i^z)^2 \right)^2 + (\sigma_i^x \sigma_i^z)^2 + (\sigma_i^y \sigma_i^z)^2 \right] \df t= \epsilon^2 \left[ 1 - (\sigma_i^z)^2 \right] \df t.
  \end{aligned}
\end{equation*}

As $(\sigma_i^x)^2 +(\sigma_i^y)^2+(\sigma_i^z)^2=1$, the It\^o correction is
\[ \frac 12 4J \epsilon^2 \sum_i [ 1 + 1 +1 - (\sigma_i^x)^2 -(\sigma_i^y)^2 -(\sigma_i^y)^2 ] = \frac 12 \sum 4J * 2 \epsilon^2 = 4JN\epsilon^2\]
and from \eqref{eq:dH}
\begin{dmath} 
  \df H = \sum -\frac{2J}{N^2} \triangle_N \sigma_i \cdot \left[ \left(
  \frac{JN}{N^2} \textrm{P}_{\sigma_i}^{\perp}
  \left(\Delta_N \sigma_i(t) \right) dt - \epsilon^2 \sigma_i(t) dt +   \textrm{P}_{\sigma_i}^{\perp}
  \left( \epsilon \df W_i(t) \right)\right) \right] \\
  + 4J N \epsilon^2.
\end{dmath}

Denote $M_t$ as the martingale given by
\begin{equation} \label{eq:dM} 
  \df M_t = \sum_i - \frac{2J}{N^2} \triangle_N \sigma_i \cdot  \textrm{P}_{\sigma_i}^{\perp}
  \left( \epsilon \df W_i(t) \right)
\end{equation}
and $\langle M \rangle_t$ the corresponding quadratic variation. The quantity $\df \langle M \rangle_t$ is calculated by summing the square of the coefficients of $\df W_i$ in $x,y,z$ components:
\begin{align*}
   \df \langle M \rangle_t 
 =&  \df \left \langle \sum_i - \frac{2J}{N^2} \triangle_N \sigma_i \cdot \textrm{P}_{\sigma_i}^{\perp} \left( \epsilon \df W_i(t) \right) \right \rangle_t \\  
  =& \df \left \langle \sum_i - \frac{2J \epsilon}{N^2} \triangle_N \sigma_i \cdot \left[  \df W_i -
  (\df W_i \cdot \sigma_i) \sigma_i \right] \right \rangle_t  \\
  =&  \df \left \langle  \sum_i \left[ \frac{-2J\epsilon}{N^2} \sum_{\alpha = x,y,z} \left(  \triangle_N \sigma_i \cdot \df W_i - (\triangle_N \sigma_i \cdot \sigma_i) \sigma_i \df W_i \right)_{\alpha} \right] \right \rangle_t  \\
  =& \sum_i \frac{4J^2 \epsilon^2}{N^4} \left[ \sum_{\alpha = x,y,z} \left( \triangle_N \sigma_i - (\triangle_N \sigma_i \cdot \sigma_i)  \sigma_i \right)_{\alpha}^2 \right] \df t \\
  =& \sum_i \frac{4J^2 \epsilon^2}{N^4} \left \| \textrm{P}_{\sigma_i}^{\perp} (\triangle_N \sigma_i)  \right \|^2 \df t,
\end{align*}
for the last step $\textrm{P}_{\sigma_i}^{\perp} (\triangle_N \sigma_i) = \triangle_N \sigma_i - (\triangle_N \sigma_i \cdot \sigma_i)
\sigma_i$.

Then the inequality for continuous $L^2$ martingale $M_t$
\begin{equation} \label{eq:Ineq} 
  \mathbb{P} ( \sup ( M_t - \alpha/2 \langle M \rangle_t) > \beta) \le e^{- \alpha \beta}
\end{equation}
is used to get a bound on $H$.

Notice in \eqref{eq:dH}, we have
\[
\triangle_N \sigma_i \cdot  \left( \textrm{P}_{\sigma_i}^{\perp} \left(\Delta_N \sigma_i \right) \right) = \left \|  \textrm{P}_{\sigma_i}^{\perp} \left(\Delta_N \sigma_i \right)   \right \|^2.
\]
We can write it in the form of $\df \langle M \rangle_t$ by observing
\[
\df H = \df M_t - \frac{N}{2 \epsilon^2} \df \langle M \rangle_t + \left[ \sum \epsilon^2 \frac{2J}{N^2} \triangle_N \sigma_i \cdot \sigma_i + 4JN \epsilon^2 \right] \df t
\]
and the last term in the bracket
\[
\sum \epsilon^2 \frac{2J}{N^2} \triangle_N \sigma_i \cdot \sigma_i + 4JN \epsilon^2 = \sum 2J \epsilon^2 ( \sigma_{i+1} + \sigma_{i-1} ) \cdot \sigma_i \le 4JN \epsilon^2.
\]
So
\begin{align*}
  H_N (t) &= H_N(0) + M_t - \frac{N}{2 \epsilon^2} \langle M \rangle_t + \int \left[ \sum \epsilon^2 \frac{2J}{N^2} \triangle_N \sigma_i  \cdot \sigma_i + 4JN \epsilon^2 \right] \df s \\
  &\le H_N(0) + M_t - \frac{N}{2\epsilon^2} \langle M \rangle_t + 4J\epsilon^2 N t.
\end{align*}

Take $\alpha = \frac{N}{\epsilon^2}$ in the inequality $\mathbb{P} ( \sup ( M_t - \alpha/2 \langle M \rangle_t) > \beta) \le e^{- \alpha \beta}$,
\begin{equation}
  \mathbb{P} \left( \sup_{t \le T} H_N \ge H_N(0) + 4J N \epsilon^2T +\beta \right) \le
  \mathbb{P} \left( \sup_{t \le T} M_t - \frac{N}{2\epsilon^2} \langle M \rangle_t \ge \beta \right) \le  e^{-\frac{\beta N}{\epsilon^2}}.
\end{equation}

\section{Diffusion on sphere} \label{Appendix:diffusionsph}

We will use Fokker-Planck equation to show the Stratonovich SDE
\begin{equation} \label{eq:rw}
  \df \bx = \textrm{P}_{\boldsymbol{x}}^\perp ( \boldsymbol{\df W})
\end{equation}
in $\mathbb{R}^2$ and $\mathbb{R}^3$ are describing Brownian motion on the unit circle and unit sphere. And in $\mathbb{R}^3$ it is regardless of the choice for $\textrm{P}_{\boldsymbol{x}}^\perp (\boldsymbol{y}) =  \boldsymbol{x} \times \boldsymbol{y}$ or $\textrm{P}_{\boldsymbol{x}}^\perp (\boldsymbol{y}) = - \boldsymbol{x} \times ( \boldsymbol{x} \times \boldsymbol{y} ) = I - \boldsymbol{x} \bx^T$.

\subsection{Circle \texorpdfstring{$\mathbb{S}^1$}{}.} 

For a Stratonovich SDE with the form
 \[
 \df X_i = b_i \df t + \sum_j \sigma_{ij} \df W_j,
 \]
 the corresponding It\^o drift coefficient is
 \begin{equation}
 \tilde{b}_{i} = b_i + \frac 12 \sum_j \sum_k \frac{\partial \sigma_{ij}}{\partial x_k} \sigma_{kj}.
 \end{equation}

On the circle $\textrm{P}_{\boldsymbol{x}}^\perp ( \bw) = (I - \bx \bx^T) \boldsymbol{\df W} = \boldsymbol{\df W} - (\bw, \bx) \bx $.  The corresponding It\^o form for \eqref{eq:rw} is
\begin{equation}
\df \bx = - \frac 12 \bx + (I - \bx \bx^T) \boldsymbol{\df W}.
\end{equation}

For It\^o SDE $\df \bx = \mu(\bx,t) \df t+ \sigma(\bx,t) \bw$, the  Fokker-Planck equation is
\begin{equation}
\frac{\partial \rho}{\partial t} = - \sum_i \frac{\partial}{\partial x_i} (\mu_i \rho) + \frac 12 \sum_i \sum_j \frac{\partial^2}{\partial x_i \partial x_j} (D_{ij} \rho),
\end{equation}
with diffusion tensor
\[
D_{ij} = \sum_{k} \sigma_{ik} \sigma{jk} = (\sigma \sigma^T)_{ij}.
\]

The  Fokker-Planck equation for $\df \bx = - \frac 12 \bx + (I - \bx \bx^T) \boldsymbol{\df W}$ is
\begin{equation}
\partial_t \rho =  \frac 12 \left[ y^2 \partial_x^2 \rho + x^2 \partial_y^2 \rho  - 2xy \partial_x \partial_y \rho - x \partial_x \rho - y \partial_y \rho \right].
\end{equation}

The Laplacian on the circle in polar coordinate is $\partial_t \rho = \partial_{\theta \theta} \rho$. Use transformation $x = r \cos \theta, y = r \sin \theta$,
\[
  \partial_{\theta \theta} = ( - y \partial_x + x \partial_y) ( - y \partial_x + x \partial_y) = y^2 \partial_x^2 + x^2 \partial_y^2 - 2xy \partial_x \partial_y - x \partial_x - y \partial_y
\]
corresponding to the Fokker-Planck equation above.

\subsection{Sphere \texorpdfstring{$\mathbb{S}^2$}{}.}

In the sphere case, the projection can take the following two forms
\[
\textrm{P}_{\boldsymbol{x}}^\perp (\boldsymbol{y}) =
	\begin{cases}
	   \boldsymbol{x} \times \boldsymbol{y} \\
	   - \boldsymbol{x} \times ( \boldsymbol{x} \times \boldsymbol{y} ) = (I - \boldsymbol{x} \bx^T) \boldsymbol{y}
	\end{cases}.
\]
In both cases the It\^o correction are the same as $- \bx$. The It\^o form for \eqref{eq:rw} is
\begin{equation}
  \df \bx = - \bx \df t + \textrm{P}_{\bx}^\perp ( \df W).
\end{equation}

In the Fokker-Planck equation calculation, for both projections the diffusion tensor are the same
\[
D = \begin{pmatrix}
     y^2+z^2 & -xy & -xz \\
     -xy & x^2+z^2 & -yz \\
     -xz & -yz & x^2+y^2
    \end{pmatrix}
\]
using the fact $x^2+y^2+z^2=1$. The Fokker-Planck equation is
\begin{align*}
  & \partial_t \rho  = \frac 12 \left[ (y^2+z^2) \rho_{xx} + (x^2+z^2) \rho_{yy} + (x^2+y^2) \rho_{zz} - \partial_x ( xy \rho_y + xz \rho_z) \right.  \\
  & \hspace{2cm}  \left. - \partial_y ( xy \rho_x + yz \rho_z) - \partial_z ( xz \rho_x + yz \rho_y) \right].
\end{align*}

The Laplacian on $\mathbb{S}^2$ in polar coordinate is
\[
\frac{1}{\sin \theta} \partial_\theta ( \sin \theta \partial_\theta) + \frac{1}{\sin^2 \theta} \partial^2_{\phi}.
\]
Using the change of coordinate
\[
\begin{cases}
 x = r \sin \theta \cos \phi \\
 y = r \sin \theta \sin \phi \\
 z = r \cos \theta
\end{cases},
\]
we have
\begin{align*}
  \partial_\theta &= \frac{xz}{\sqrt{1-z^2}} \partial_x + \frac{yz}{\sqrt{1-z^2}} \partial_y - \sqrt{1-z^2} \partial_z \\
  \partial^2_\theta &= \frac{x^2 z^2}{1-z^2} \partial^2_x  + \frac{y^2 z^2}{1-z^2} \partial^2_y + 2 \frac{xyz^2}{1-z^2} \partial^2_{xy} -2xz \partial^2_{xz} - 2yz \partial^2_{yz} - x \partial_x - y \partial_y \\
  & \qquad + (1-z^2) \partial^2_z - z \partial_z \\
  \partial^2_\phi &= ( - y \partial_x + x \partial_y) ( - y \partial_x + x \partial_y) = y^2 \partial_x^2 + x^2 \partial_y^2 - 2xy \partial_x \partial_y - x \partial_x - y \partial_y.
\end{align*}
As $x^2+y^2+z^2=1, \sin^2 \theta = x^2 +y^2, \frac{\cos \theta}{\sin \theta} = \frac{z}{\sqrt{x^2+y^2}}$, the equation
\[
\partial_t \rho = \frac{1}{\sin \theta} \partial_\theta  ( \sin \theta \partial_\theta \rho ) + \frac{1}{\sin^2 \theta} \partial^2_{\phi} \rho
\]
is corresponding to the Fokker-Planck equation above.

\end{document}